\tikzstyle{dot}=[fill=black, draw=black, shape=circle]
\tikzstyle{arrow}=[->]
\tikzstyle{axis}=[<->, draw=black]
\tikzstyle{axis red}=[draw=red, <->]
\tikzstyle{new edge style 0}=[-, draw={rgb,255: red,191; green,191; blue,191}]
\tikzstyle{arrow red}=[draw=red, ->]
\tikzstyle{line red}=[-, draw=red]
\pretocmd{\chapter}{\addtocontents{toc}{\protect\addvspace{15\p@}}}{}{}
\pretocmd{\section}{\addtocontents{toc}{\protect\addvspace{3\p@}}}{}{}
\def\@tocline#1#2#3#4#5#6#7{\relax
  \ifnum #1>\c@tocdepth 
  \else
    \par \addpenalty\@secpenalty\addvspace{#2}%
    \begingroup \hyphenpenalty\@M
    \@ifempty{#4}{%
      \@tempdima\csname r@tocindent\number#1\endcsname\relax
    }{%
      \@tempdima#4\relax
    }%
    \parindent\z@ \leftskip#3\relax \advance\leftskip\@tempdima\relax
    \rightskip\@pnumwidth plus4em \parfillskip-\@pnumwidth
    #5\leavevmode\hskip-\@tempdima
      \ifcase #1
       \or\or \hskip .5em \or \hskip 1em \else \hskip 1.5em \fi%
      #6\nobreak\relax
    \dotfill\hbox to\@pnumwidth{\@tocpagenum{#7}}\par
    \nobreak
    \endgroup
  \fi}
\DeclareSymbolFont{bbold}{U}{bbold}{m}{n}
\DeclareSymbolFontAlphabet{\mathbbold}{bbold}
\newcommand{\C}{\mathbb{C}}
\newcommand{\N}{\mathbb{N}}
\newcommand{\Z}{\mathbb{Z}}
\newcommand{\R}{\mathbb{R}}
\newcommand{\Q}{\mathbb{Q}}
\newcommand{\F}{\mathbb{F}}
\newcommand{\A}{\mathbb{A}}
\newcommand{\X}{\mathbb{X}}
\newcommand{\Gal}{\operatorname{Gal}}
\renewcommand{\ss}{\operatorname{ss}}
\newcommand{\der}{\operatorname{der}}
\newcommand{\End}{\operatorname{End}}
\newcommand{\SO}{\mathrm{SO}}
\newcommand{\GL}{\mathrm{GL}}
\def\bG{\mathbf{G}}
\def\bH{\mathbf{H}}
\def\bT{\mathbf{T}}
\def\bC{\mathbf{C}}
\def\bU{\mathbf{U}}
\def\bpx{\begin{pmatrix}}
\def\epx{\end{pmatrix}}
\newcommand{\hto}{\hookrightarrow}
\newcommand\Lieg{\mathfrak{g}}
\newcommand\Lieq{\mathfrak{q}}
\newcommand\Liet{\mathfrak{t}}
\newtheorem{thm}{Theorem}[section]
\newtheorem{theorem}[thm]{Theorem}
\newtheorem{cor}[thm]{Corollary}
\newtheorem{prop}[thm]{Proposition}
\newtheorem{lemma}[thm]{Lemma}
\newtheorem{remark}[thm]{Remark}
\theoremstyle{definition}
\newtheorem{example}[thm]{Example}
\newtheorem{defi}[thm]{Definition}
\title[]{Rectangular representations and $\lambda$-independence of algebraic monodromy groups}
\author{Chun-Yin Hui}
\address{Department of
 Mathematics, The University of Hong
 Kong, Pokfulam, Hong Kong}
\email{chhui@maths.hku.hk, pslnfq@gmail.com}
\author{Wonwoong Lee}
\address{ Department of
 Mathematics Education, Chonnam National University, 77, Yongbong-ro, Buk-gu, Gwangju, Republic of Korea}
\email{dldnjsdnd041@gmail.com}
\subjclass[2020]{11F80, 11F70, 11F22, 
 17B10, 20G05}
\thanks{}
\date{\today}
\begin{document}
\maketitle

\begin{abstract}
Let $\Lieg$ be a complex semisimple Lie algebra.
We define what it means for a finite dimensional representation of $\Lieg$
to be rectangular and completely classify faithful rectangular representations.
As an application, we obtain new $\lambda$-independence results 
on the algebraic monodromy groups of compatible systems of $\lambda$-adic Galois representations
of number fields.
\end{abstract}

{\tableofcontents}

\section{Introduction}
\subsection{Rectangular representations}\label{s1.1}
\subsubsection{}\label{s1.1.1} Let $\psi:\Lieg\to\End(V)$ be a finite dimensional representation of
a complex semisimple Lie algebra $\Lieg$. The following question
is of particular interest.\vspace{.08in}

\noindent \textbf{Q1}. If $\psi$ is faithful and $\Liet$ is a Cartan subalgebra of $\Lieg$, to 
what extent is $(\Lieg,\psi)$ determined by the restriction $\psi|_\Liet$ (i.e., the formal character)?
\vspace{.08in}

 When $\psi$ is irreducible, Larsen-Pink
 completely answered this question 
in the group-theoretic perspective \cite[Theorem 4]{LP90}.
Without assuming the irreducibility of $\psi$, it is proven 
that the number of $A_m$-factors of $\Lieg$ for $m\in\N\backslash\{1,2,3,4,5,7,8\}$ (resp. the parity of 
the number of $A_4$-factors of $\Lieg$) is determined by the formal character $\psi|_\Liet$ \cite[Theorems 2.14, 2.17]{Hui13},
where $A_m:=\mathfrak{sl}(m+1)$. Inspired by recent work on type $A_1$ automorphic Galois representations \cite{HL24},
we study a  refinement  of the above  question; see Q2 below.

For $d\in\Z_{\geq 0}$, define $Z_d:=\{-d,-d+2,-d+4,\ldots,d-2,d\}\subset\Z$.
The size $|Z_d|$ is $d+1$.
Let $n$ be the rank of $\Lieg$.
The weight lattice $\Lambda_\Lieg$ of $\Lieg$ (with respect to $\Liet$) 
is a $\Z$-lattice of the $n$-dimensional real vector space $\Lambda_\Lieg\otimes\R$. 
The formal character $\psi|_\Liet$
corresponds to a multiset 
$\Xi$ of weights in $\Lambda_\Lieg\subset \Lambda_\Lieg\otimes\R$. 
We say that the representation $\psi:\Lieg\to\End(V)$ is \emph{rectangular} (Definition \ref{rectrepn}) if
\begin{enumerate}
\item[(a)] every weight in $\Xi$ is of multiplicity one and
\item[(b)] there exist an $\R$-isomorphism $\iota:\Lambda_\Lieg\otimes\R\to\R^n$ and $d_1,\ldots,d_n\in\Z_{\geq 0}$ such that 
$$\iota(\Xi)=Z_{d_1}\times Z_{d_2}\times\cdots\times Z_{d_n}.$$
\end{enumerate}
Geometrically, (b) means that the weights $\Xi$ can be arranged in a rectangular shape in some coordinate system of $\Lambda_\Lieg\otimes\R$.
The \emph{set of lengths} of the rectangular representation $\psi$ is defined as the multiset 
$$\mathscr L_\psi:=\{|Z_{d_i}|=d_i+1:~ 1\leq i\leq n\}.$$
We say that the rectangular representation $\psi$ is \emph{hypercubic}  if $d_1=d_2=\cdots=d_n$.
In that case, 
$d_1+1$ is called the \emph{length} of $\psi$.
All these terminology and notation are well-defined and depend only on 
the formal character $\psi|_\Liet$ (see $\mathsection\mathsection\ref{s2.2}-\ref{s2.3}$).

Typical examples of rectangular representations are irreducible representations 
of $A_1\times\cdots\times A_1$. If $(\Lieg_1,\psi_1)$ and $(\Lieg_2,\psi_2)$
are rectangular with the set of lengths, respectively, equal to $\mathscr L_{\psi_1}$ and $\mathscr L_{\psi_2}$, 
then the \emph{external tensor product} $(\Lieg_1\times\Lieg_2,\psi_1\otimes\psi_2)$
is rectangular with the set of lengths  given by the disjoint union $\mathscr L_{\psi_1}\sqcup \mathscr L_{\psi_2}$. 
A rectangular representation $(\Lieg,\psi)$ is said 
to be \emph{indecomposable} if it is not equivalent to an external tensor product
of two rectangular representations as above. Below is a natural question.\\

\noindent \textbf{Q2}. Classify all faithful rectangular representations of complex semisimple Lie algebras.

\subsubsection{}\label{s1.1.2} Adopt the conventions: $A_m=\mathfrak{sl}(m+1)$ ($m\geq 1)$, $B_m=\mathfrak{so}(2m+1)$ ($m\geq 1)$,
$C_m=\mathfrak{sp}(2m)$ ($m\geq 1)$, and $D_m=\mathfrak{so}(2m)$ ($m\geq 2)$ for classical complex 
Lie algebras. All of these are simple, except for $D_2=A_1\times A_1$. Note also the coincidence $A_1=B_1=C_1$, $B_2=C_2$, and $D_3=A_3$.
We present a complete answer to Q2.

\begin{thm}\label{main1}
Let $\psi$ be a faithful rectangular representation 
of a complex semisimple Lie algebra $\Lieg$. 
Fix a decomposition $\Lieg=\Lieg_1\times\Lieg_2\times\cdots\times \Lieg_k$, where
$\Lieg_1$ denotes the product of $A_1$-factors and $\Lieg_2,\ldots,\Lieg_k$ denote simple non-$A_1$-factors. 
Then the following assertions hold.
\begin{enumerate}
\item[(i)] There exist a faithful rectangular representation $(\Lieg_1,\psi_1)$
and faithful indecomposable hypercubic representations $(\Lieg_i,\psi_i)$ for $2\leq i\leq k$ such that 
\begin{equation}\label{uni1}
(\Lieg,\psi)=\bigotimes_{i=1}^k(\Lieg_i,\psi_i)
\end{equation}
as an external tensor product.
\item[(ii)] The rectangular representation $\psi_1$ admits an external tensor product 
of indecomposable hypercubic representations
\begin{equation}\label{uni2}
(\Lieg_1,\psi_1)=\bigotimes_{j=1}^{s} (\Lieg_{1,j},\psi_{1,j})
\end{equation}
such that $\Lieg_1=\prod_{j=1}^s\Lieg_{1,j}$ is some decomposition
and each $\psi_{1,j}$ is one of the following.
\begin{enumerate}[(a)]
    \item $(A_1,\mathrm{Sym}^r(\mathrm{Std}))$, $r \in \N$.
    \item $(A_1, \mathrm{Sym}^{r_1}(\mathrm{Std}) \oplus \mathrm{Sym}^{r_2}(\mathrm{Std}))$, $r_1,r_2\in\Z_{\geq 0}$ and $|r_1-r_2|=1$.
    \item $(A_1 \times A_1, (\mathrm{Std} \otimes \mathbb 1) \oplus (\mathbb 1 \otimes \mathrm{Std}))
		=(D_2,\mathrm{Spin})$.
\end{enumerate}
\item[(iii)] The hypercubic representation $\psi_i$ for $2\leq i\leq k$ is one of the following.
\begin{enumerate}[(a)]
		\item $(B_2, \mathrm{Std}\oplus \mathrm{Spin})$.
    \item $(B_m, \mathrm{Spin})$, $m \geq 2$.
		\item $(A_3,\mathrm{Std}\oplus\mathrm{Std}^{\vee})=(D_3,\mathrm{Spin})$.
    \item $(D_4, \mathrm{Spin})$ or $(D_4, \mathrm{Std}\oplus \mathrm{Spin}^+)$ or $(D_4, \mathrm{Std}\oplus \mathrm{Spin}^-)$.
    \item $(D_m, \mathrm{Spin})$, $m \geq 5$.
\end{enumerate}
\item[(iv)] The external tensor product \eqref{uni1} is unique.
\item[(v)] The external tensor product \eqref{uni2} is unique up to permutation
of the factors in the decomposition $\Lieg_1=\prod_{j=1}^s\Lieg_{1,j}$.
\end{enumerate}
\end{thm}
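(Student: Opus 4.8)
The plan is to reduce to indecomposable representations, use the Weyl group to pin down the type of $\Lieg$, run a type-by-type classification built on multiplicity-freeness, Larsen--Pink rigidity and the constituent counts of \cite{Hui13}, and finally read off uniqueness from the factorization of the formal character. \emph{Reduction to indecomposables.} If a faithful rectangular representation is decomposable it is an external tensor product $(\Lieg',\psi')\otimes(\Lieg'',\psi'')$ of rectangular representations, and since a nontrivial external tensor product is faithful exactly when both factors are, $\psi'$ and $\psi''$ are again faithful. Since $\Lieg$ has finitely many simple factors, iterating produces a decomposition $(\Lieg,\psi)=\bigotimes_\mu(\Lieg^{(\mu)},\psi^{(\mu)})$ into faithful indecomposable rectangular representations, matching a partition of the simple factors of $\Lieg$, with set of lengths the disjoint union of the $\mathscr L_{\psi^{(\mu)}}$. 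So it suffices to classify faithful indecomposable rectangular representations, to show each is supported on a product of $A_1$'s or on a single simple non-$A_1$ factor, and to prove uniqueness of the decomposition.

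\emph{The type of $\Lieg$.} Let $(\Lieg,\psi)$ be faithful rectangular, $n=\rk(\Lieg)$, with weight multiset $\Xi$ identified by $\iota$ with $B=Z_{d_1}\times\cdots\times Z_{d_n}$. Faithfulness forces every $d_j\ge1$ (otherwise $\Xi$ lies in a hyperplane). Since $\Xi$ is stable under the Weyl group $W=\prod_i W(\Lieg_i)$, the map $\iota$ conjugates $W$ into the group of linear symmetries of $B$; as any such symmetry preserves the rectangular parallelepiped $\operatorname{conv}(B)$, this group is a product $\prod_c W(B_{m_c})$ of hyperoctahedral groups, indexed by the distinct values $c$ among the $d_j$ with multiplicities $m_c$. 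The action of a simple factor $W(\Lieg_i)$ on $\Lambda_\Lieg\otimes\R$ is (irreducible reflection representation) $\oplus$ (trivial); comparing with the block decomposition, this reflection representation must sit inside a single block $\R^{m_{c(i)}}$ while $W(\Lieg_i)$ maps trivially to the other factors, so $W(\Lieg_i)\hookrightarrow W(B_{m_{c(i)}})$ with its reflections acting on $\R^{m_{c(i)}}$ as genuine reflections. Hence $W(\Lieg_i)$ is an irreducible reflection subgroup of a hyperoctahedral group, so it is the Weyl group of a root subsystem of some $B_m$, of type $A$, $B/C$ or $D$; thus $\Lieg$ is a product of classical simple Lie algebras. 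For a non-$A_1$ factor of type $B$, $C$ or $D$ the reflection representation already fills the standard representation of its ambient hyperoctahedral group, so it occupies its block alone and the relevant $d_j$'s coincide, i.e.\ the representation is hypercubic there; a factor of type $A_m$ with $m\ge2$ needs a strictly larger block ($m_{c(i)}\ge m+1$) and so can occur only when it shares a block with another factor.

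\emph{The classical classification.} I would then write $\psi=\bigoplus_\ell V_\ell$ with $V_\ell$ irreducible; by multiplicity-freeness the $V_\ell$ are multiplicity-free with pairwise disjoint weight sets tiling $B$. Combining the classification of multiplicity-free representations of classical groups (essentially: standard, $\operatorname{Sym}^2$, exterior powers, and spin representations), the Larsen--Pink rigidity \cite[Theorem 4]{LP90} that an irreducible representation of a semisimple group is determined by its formal character, and the constituent counts \cite[Theorems 2.14, 2.17]{Hui13} (which already eliminate $A_m$-factors for $m$ large and control $A_4$), I expect to show: a simple factor of type $B$, $C$ or $D$ forces its constituents to be standard or (half-)spin representations, and with the box shape this yields exactly the list in (iii)(a),(b),(d) as its own tensor factor; an $A_m$-factor with $m\ge2$ can share its block with another factor only through the coincidence $A_3=D_3$, giving (iii)(c); and the $A_1$-part factors, over a partition of the $A_1$'s into blocks of size $1$ or $2$, into the pieces in (ii)(a)--(c) — here the decisive elementary inputs are that the one-variable ``box polynomials'' $p_r(t)=\sum_{k\in Z_r}t^k$ make $Z_{r_1}\cup Z_{r_2}$ a rescaling of some $Z_d$ exactly when $|r_1-r_2|\le1$, and that the diamond $Z_1\times Z_0\ \cup\ Z_0\times Z_1$ is the only new possibility produced by pairing two $A_1$'s. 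Matching weight-box tilings against the multiplicity-free list is the technical heart of the argument, and I expect the pairing/entanglement bookkeeping for $A$-type factors to be the main obstacle. \emph{Uniqueness.} Finally, the formal character of $\psi$ factors according to \eqref{uni1} and \eqref{uni2}; since each tensor factor appearing there is one of the explicitly listed building blocks, it is recovered from the corresponding factor of the character (using Larsen--Pink for the irreducible constituents), whence \eqref{uni1} is unique and \eqref{uni2} is unique up to permuting the pairwise isomorphic $A_1$-blocks.
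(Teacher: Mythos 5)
Your overall frame (the Weyl group lands in the symmetry group of the box, which is a product of hyperoctahedral groups $\prod_c W(B_{m_c})$ indexed by the distinct side lengths; each simple factor's reflection representation sits in a single block; then classify via weight multiplicity-freeness) is the same as the paper's. But two essential steps are missing, and the tools you cite do not supply them.

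First, nothing in your proposal actually excludes simple factors of type $A_2$ or $A_r$ with $r\ge 4$. These types survive your Weyl-group filter: $W(A_{m})=S_{m+1}$ \emph{is} an irreducible reflection subgroup of $W(B_{m_c})$ for suitable $m_c$, so "irreducible reflection subgroup of a hyperoctahedral group" does not rule them out, and the results you invoke from \cite{Hui13} (Theorems 2.14, 2.17) only say that the \emph{number} of certain $A_m$-factors is determined by the formal character — they do not say a rectangular character forbids them. Larsen--Pink rigidity is likewise irrelevant here since $\psi$ need not be irreducible. The paper's mechanism is a counting argument you have no substitute for: after passing to equal-rank subalgebras to reduce exceptional factors to type $A$, one notes that $W(A_{m})=S_{m+1}$ injects into the quotient $S_{n}$ of $W(B_n)$ whenever $S_{m+1}$ has no nontrivial normal $2$-subgroup (i.e.\ $m\neq 1,3$), and then Wright's theorem $\mu(\prod_i S_{n_i})=\sum_i n_i$ on minimal faithful permutation degrees forces $\sum_i (n_i+1)\le n=\sum_i n_i$, a contradiction. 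Without this (or an equivalent), types $A_2, A_4, A_5,\dots$ remain live possibilities and the classification in (iii) is not established.

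Second, the tensor factorization \eqref{uni1} does not follow from "each factor sits in a single block." To apply the factorization principle (the paper's Proposition \ref{decompose}) you need the weight multiset to split as a \emph{direct product} along $\Lambda_\Lieg\otimes\R=\bigoplus_i\Lambda_{\Lieg_i}\otimes\R$, which requires each $\Lambda_{\Lieg_i}\otimes\R$ to be \emph{standard}, i.e.\ spanned by coordinate axes of the box — not merely contained in one block. Inside a block the subspace can be tilted: for an $A_1$-factor the reflection may correspond to a long root $e_i+e_j$ of $B_n$ (this is exactly how the entangled block $(A_1\times A_1,\mathrm{Std}\oplus\mathrm{Std})$ in (ii)(c) arises), and for a $D_3=A_3$-factor the three-dimensional weight space could a priori be the orthogonal complement of a vector $e_i+\delta_1e_j+\delta_2e_l+\delta_3e_s$ in a standard $4$-space; the paper spends Propositions \ref{Bnroot}(iv),(v) and \S\S\ref{s2.5}--\ref{s2.6} ruling such configurations out by analysing short versus long roots and the possible root configurations of $\Phi_{B_n}$ in planes and $3$-spaces. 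Your assertion that a non-$A_1$ classical factor "occupies its block alone" is moreover false as stated (e.g.\ $(B_2,\mathrm{Spin})\otimes(B_3,\mathrm{Spin})$ is hypercubic of length $2$ with both factors in one block of dimension $5$). Until standardness is proved, neither \eqref{uni1} nor the reduction of (iii) to single simple factors is available, so this is a genuine gap rather than routine bookkeeping.
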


\begin{remark}
Here, $\mathrm{Std}$ (resp. $\mathrm{Spin}$) denotes the standard (resp. spin) representation,
and $\mathbb 1$ denotes the trivial (one-dimensional) representation. 
For $m \geq 2$, $(B_m, \mathrm{Spin})$ is irreducible 
and $(D_m, \mathrm{Spin})$ is the direct sum of two irreducible (half-spin) representations $\mathrm{Spin}^+$ and $\mathrm{Spin}^-$.

Note that $(A_1,\mathrm{Std})\otimes (A_1,\mathrm{Std})$, $(B_2,\mathrm{Spin})=(C_2,\mathrm{Std})$, 
and $(D_2,\mathrm{Spin})$ are all hypercubic with the same set of lengths $\{2,2\}$.
In Theorem \ref{main1}(iii), the hypercubic $(B_2, \mathrm{Std}\oplus \mathrm{Spin})$ has length $3$, while the remaining hypercubic representations all have length $2$. 

Note also that the representation $\psi_{1,j}$ (resp. $\psi_i$) in Theorem \ref{main1}(ii) (resp. Theorem \ref{main1}(iii)) is either irreducible or the direct sum of two irreducibles.
We present the figures of the weights of the (reducible) representations in Theorem~\ref{main1}(ii)(b),(c) and 
Theorem~\ref{main1}(iii)(a),(c).
\end{remark}

\begin{figure}[h]
\begin{tabular}{cccccccc}
\tikzfig{fignew1}
& & & & & & &
\tikzfig{fignew2}
\end{tabular}
\caption{$(A_1, \mathrm{Sym}^{4}(\mathrm{Std}) \oplus \mathrm{Sym}^{3}(\mathrm{Std}))$ and $(A_1 \times A_1, (\mathrm{Std} \otimes \mathbb 1) \oplus (\mathbb 1 \otimes \mathrm{Std}))$}
\end{figure}

\begin{figure}[h]
\begin{tabular}{cccccccc}
\tikzfig{fignew3}
& & & & & & &
\tikzfig{fignew4}
\end{tabular}
\caption{$(B_2, \mathrm{Std}\oplus \mathrm{Spin})$ and $(A_3,\mathrm{Std}\oplus\mathrm{Std}^{\vee})$}
\end{figure}

The following corollaries are immediate.

\begin{cor}
If $\psi$ is a faithful rectangular representation of a complex semisimple Lie algebra $\Lieg$,
then each simple factor of $\Lieg$ 
must be one of $A_1$, $A_3$, $B_m$ for $m\geq 2$, or $D_m$ for $m\geq 4$.
\end{cor}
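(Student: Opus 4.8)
The plan is to deduce the statement directly from Theorem \ref{main1}, which already lists every possible building block. First I would invoke part (i): fixing the decomposition $\Lieg=\Lieg_1\times\Lieg_2\times\cdots\times\Lieg_k$ with $\Lieg_1$ the product of the $A_1$-factors and $\Lieg_2,\ldots,\Lieg_k$ the simple non-$A_1$ factors, the theorem gives a faithful rectangular $(\Lieg_1,\psi_1)$ and faithful indecomposable hypercubic $(\Lieg_i,\psi_i)$ for $2\leq i\leq k$ with $(\Lieg,\psi)=\bigotimes_{i=1}^k(\Lieg_i,\psi_i)$. Since the simple factors of $\Lieg$ are precisely the $A_1$-factors occurring in $\Lieg_1$ together with $\Lieg_2,\ldots,\Lieg_k$, it suffices to pin down the latter.

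Next I would apply part (iii): for each $2\leq i\leq k$ the pair $(\Lieg_i,\psi_i)$ occurs in the list (a)--(d), whose underlying simple Lie algebras are exactly $B_2$, $B_m$ $(m\geq 2)$, $A_3=D_3$, and $D_m$ $(m\geq 4)$. Hence $\Lieg_i\in\{A_3\}\cup\{B_m:m\geq 2\}\cup\{D_m:m\geq 4\}$ for every $i\geq 2$. Adjoining the $A_1$'s coming from $\Lieg_1$ shows that every simple factor of $\Lieg$ is one of $A_1$, $A_3$, $B_m$ $(m\geq 2)$, or $D_m$ $(m\geq 4)$, which is the assertion. (One can also note that the ranges are sharp in the sense that $B_1=A_1$ and $D_3=A_3$ are already included, while $D_2=A_1\times A_1$ is not simple, so no redundancy or overlap is hidden in the statement.)

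There is no genuine obstacle in this argument: the corollary is a purely bookkeeping consequence of the classification, and all of the work is contained in the proof of Theorem \ref{main1} itself.
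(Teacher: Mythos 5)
Your argument is correct and is exactly what the paper intends: the corollary is stated as an immediate consequence of Theorem \ref{main1}, and reading off the simple factors from parts (i) and (iii) is precisely the bookkeeping required. No gaps.
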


\begin{cor}\label{cor_L at most 2 at most 3}
Let $\psi$ be a faithful rectangular representation of a complex semisimple Lie algebra $\Lieg$ with $\mathscr L_\psi$ the set of lengths.
\begin{enumerate}[(i)]
\item If $\mathscr L_\psi$ contains at most one $2$ and at most one $3$, then $\Lieg$
has only $A_1$-factors.
\item If every element of $\mathscr L_\psi$ is even, and 2 appears at most once in $\mathscr L_\psi$, then the representation decomposes as an external tensor product $$(\Lieg, \psi)=\bigotimes_{\ell\in\mathscr L_\psi} (A_1,\mathrm{Sym}^{\ell-1}(\mathrm{Std}))$$ and it is irreducible.
\end{enumerate}
\end{cor}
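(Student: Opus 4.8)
The plan is to read off both statements directly from the classification in Theorem~\ref{main1}, using the additivity of the set of lengths under external tensor product recalled in the introduction: if $(\Lieg,\psi)=\bigotimes_i(\Lieg_i,\psi_i)$, then $\mathscr L_\psi=\bigsqcup_i\mathscr L_{\psi_i}$ as multisets. So it suffices to record, for each building block occurring in Theorem~\ref{main1}, the multiset of lengths it contributes, and then see which combinations survive the hypotheses.

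For (i), I first observe that every non-$A_1$ simple factor of $\Lieg$ contributes either at least two $2$'s or at least two $3$'s to $\mathscr L_\psi$. Indeed, by Theorem~\ref{main1}(iii) such a factor $(\Lieg_i,\psi_i)$ is hypercubic of some length $\ell_i$ with $\rk(\Lieg_i)=m_i\geq 2$, so $\mathscr L_{\psi_i}$ consists of $m_i$ copies of $\ell_i$; and the list gives $\ell=3,\ m=2$ for $(B_2,\mathrm{Std}\oplus\mathrm{Spin})$, while $\ell=2$ and $m\geq 2$ in each of the three spin cases $(B_m,\mathrm{Spin})$ ($m\geq 2$), $(A_3,\mathrm{Std}\oplus\mathrm{Std}^{\vee})=(D_3,\mathrm{Spin})$, and $(D_m,\mathrm{Spin})$ ($m\geq 4$). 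Hence if $\mathscr L_\psi$ contains at most one $2$ and at most one $3$, no non-$A_1$ factor can occur, i.e.\ $\Lieg$ has only $A_1$-factors. Both halves of the hypothesis get used: ``at most one $3$'' is precisely what excludes the length-$3$ block $(B_2,\mathrm{Std}\oplus\mathrm{Spin})$, whose length multiset $\{3,3\}$ contains no $2$.

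For (ii), since every element of $\mathscr L_\psi$ is even there is no $3$ in $\mathscr L_\psi$, so the hypothesis of (i) holds and $\Lieg=\Lieg_1$ is a product of $A_1$'s. I then invoke Theorem~\ref{main1}(ii) to write $(\Lieg_1,\psi_1)=\bigotimes_{j=1}^{s}(\Lieg_{1,j},\psi_{1,j})$ with each $\psi_{1,j}$ of type (a), (b) or (c), and I compute the length multiset of each block: type (a) $(A_1,\mathrm{Sym}^r(\mathrm{Std}))$ contributes the single length $r+1\geq 2$; type (c) $(D_2,\mathrm{Spin})$ contributes two copies of $2$; and for type (b) $(A_1,\mathrm{Sym}^{r_1}(\mathrm{Std})\oplus\mathrm{Sym}^{r_2}(\mathrm{Std}))$ with $|r_1-r_2|=1$, the two weight strings $Z_{r_1}$, $Z_{r_2}$ have opposite parity and interlock, so their union is the full interval of $2\max(r_1,r_2)+1$ consecutive integers, which becomes $Z_{2\max(r_1,r_2)}$ after rescaling; thus type (b) contributes the single \emph{odd} length $2\max(r_1,r_2)+1\geq 3$. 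Now evenness of all lengths rules out every type (b) block, and ``$2$ appears at most once'' rules out every type (c) block, so each $\psi_{1,j}$ is of type (a), say $\psi_{1,j}=(A_1,\mathrm{Sym}^{r_j}(\mathrm{Std}))$, with $r_j$ odd (and at most one $r_j=1$). Since $\mathscr L_\psi=\{r_1+1,\ldots,r_s+1\}$, substituting $\ell=r_j+1$ yields $(\Lieg,\psi)=\bigotimes_{\ell\in\mathscr L_\psi}(A_1,\mathrm{Sym}^{\ell-1}(\mathrm{Std}))$, and this is irreducible because an external tensor product of irreducible representations of distinct simple Lie algebras is irreducible.

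The argument is essentially bookkeeping once Theorem~\ref{main1} is in hand; the only point demanding attention is that the length multiset $\{2,2\}$ does \emph{not} pin down the representation --- it is realized both by $(A_1,\mathrm{Std})\otimes(A_1,\mathrm{Std})$ and by the inequivalent $(D_2,\mathrm{Spin})=(\mathrm{Std}\otimes\mathbb 1)\oplus(\mathbb 1\otimes\mathrm{Std})$ --- which is exactly why ``$2$ appears at most once'' cannot be dropped from (ii), even though $\Lieg=\Lieg_1$ continues to hold without it.
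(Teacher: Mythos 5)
Your proposal is correct and is exactly the intended deduction: the paper offers no separate proof (it declares the corollary ``immediate'' from Theorem \ref{main1}), and your bookkeeping of the length multisets of each block in Theorem \ref{main1}(ii),(iii) --- in particular that every non-$A_1$ factor contributes at least two $2$'s or two $3$'s, and that type (b) blocks have odd length while type (c) contributes $\{2,2\}$ --- is precisely what makes it immediate. The closing remark about $\{2,2\}$ not determining the representation is a nice touch and matches the paper's own remark following Theorem \ref{main1}.
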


\begin{cor}\label{irrednumber}
If $\psi$ is a rectangular representation of a complex semisimple Lie algebra $\Lieg$,
then $\psi$ is the direct sum of $2^t$ irreducible representations for some integer $t$.
\end{cor}

\begin{defi}\label{rectforgp}
Let $\Psi:\bG\to\GL(V)$ be a finite dimensional complex representation of a complex reductive group $\bG$
and let $\psi: \mathrm{Lie}(\bG)\to\End(V)$ be the associated Lie algebra representation.
We say that $\Psi$ is rectangular (resp. hypercubic) if this is so for the restriction $\psi|_{\mathrm{Lie}(\bG)^{\ss}}$,
where $\mathrm{Lie}(\bG)^{\ss}$ is the semisimple part of $\mathrm{Lie}(\bG)$.
Then the set of lengths of $\Psi$ is defined to be the set of lengths of $\psi|_{\mathrm{Lie}(\bG)^{\ss}}$.
\end{defi}

The classification results above are applied to deduce new  $\lambda$-independence results  on the
algebraic monodromy groups $\{\bG_\lambda\}$ of some compatible system $\{\rho_\lambda\}$
of Galois representations.

\subsection{Application to Galois representations}\label{s1.2}
\subsubsection{}\label{s1.2.1} Let $K$ be a number field, $\overline K$ be an algebraic closure of $K$, 
and $\Gal_K:=\Gal(\overline K/K)$ the absolute Galois group of $K$ (equipped with the profinite topology).
Let $E$ be another number field and $\Sigma_E$ be the set of finite places of $E$. 
For each finite place $\lambda\in\Sigma_E$, denote by $E_\lambda$ the $\lambda$-adic completion of $E$ and by $\overline E_\lambda$
an algebraic closure of $E_\lambda$.
A $\lambda$-adic representation of $K$ is a continuous group homomorphism
$$\rho_\lambda:\Gal_K\to\GL_n(\overline E_\lambda).$$
The \emph{algebraic monodromy group} of $\rho_\lambda$, denoted $\bG_\lambda$, is the Zariski closure of the Galois image $\rho_\lambda(\Gal_K)$ in $\GL_{n,\overline E_\lambda}$.
If $\rho_\lambda(\Gal_K)\subset\GL_n(E_\lambda)$, then $\bG_\lambda$
becomes an $E_\lambda$-subgroup of $\GL_{n,E_\lambda}$. 
If $\rho_\lambda$ is semisimple,
then $\bG_\lambda$ is reductive and $\bG_\lambda^{\der}:=[\bG_\lambda^\circ,\bG_\lambda^\circ]$ 
(the derived group of the identity component $\bG_\lambda^\circ$) is semisimple.
Fix an isomorphism $\iota_\lambda:\overline E_\lambda\simeq \C$ for each $\lambda\in\Sigma_E$.
We define $\bG_{\lambda,\C}:=\bG_\lambda\times_{\iota_\lambda}\C$
and $\bG_{\lambda,\C}^{\der}:=\bG_\lambda^{\der}\times_{\iota_\lambda}\C$.

Given a \emph{compatible system} (Definition \ref{csdef}) of semisimple $\lambda$-adic Galois representations
\begin{equation}\label{cs}
\{\rho_\lambda:\Gal_K\to\GL_n(\overline E_\lambda)\}_{\lambda\in\Sigma_E}
\end{equation}
 of $K$ defined over $E$, we are interested in the following question on the system of 
algebraic monodromy groups $\{\bG_\lambda\}_{\lambda\in\Sigma_E}$. \\

\noindent \textbf{Q3}. ($\lambda$-independence of $\bG_\lambda\subset\GL_{n,\overline E_\lambda}$)
\begin{enumerate}[(i)]
\item Is the conjugacy class of the complex reductive subgroup
$\bG_{\lambda,\C}\subset \GL_{n,\C}$
 independent of $\lambda\in\Sigma_E$?
\item Suppose $\rho_\lambda(\Gal_K)\subset\GL_n(E_\lambda)$ for each $\lambda\in\Sigma_E$.
Does there exist a reductive subgroup $\bG\subset\GL_{n,E}$ such that 
$\bG\times_E E_\lambda$ and $\bG_\lambda$ are conjugate in $\GL_{n,E_\lambda}$
for all $\lambda\in\Sigma_E$?
\end{enumerate}

When $\rho_\lambda$ is abelian for some $\lambda$, the answer to Q3(ii) is yes \cite{Ser98}.
When the compatible system is associated to the $\ell$-adic \'etale cohomology groups of degree $w$ of
a smooth projective variety $X/K$ (in this case $E=\Q$), the Mumford-Tate conjecture (see e.g., \cite[$\mathsection9$]{Ser94}) predicts 
a candidate for the identity component $\bG^\circ$ in Q3(ii), see e.g. \cite{LP95},\cite{Pi98} 
for positive results on some abelian varieties.
When $E=\Q$, Larsen-Pink  obtained various $\ell$-independence results of $\bG_\ell^\circ$ 
after restricting to some Dirichlet
density one set of rational primes $\ell\in\Sigma_\Q$ \cite{LP92}; the first author
studied Q3 under some type $A$ assumption on one $\bG_\ell$ \cite{Hui18}.
Assuming that Q3(i) holds, some criteria are given for constructing $\bG^\circ$ in Q3(ii)
in \cite{Hui25}.
If the compatible system is associated to some automorphic representation of a totally real field $K$,
Q3(i) is studied under some group-theoretic assumptions  on some $\bG_\lambda$ (e.g. connected and of type $A_1$) \cite{HL24}.

Although Q3 is wide open, the following $\lambda$-independence results are known (see $\mathsection\ref{s3.5}$ for details).

\begin{enumerate}[(A)]
\item The component group $\bG_\lambda/\bG_\lambda^\circ$ is independent of $\lambda$ \cite{Ser81}.
\item The \emph{formal character} (Definition~\ref{formchar}(1)) of the reductive subgroup $\bG_{\lambda,\C}\subset\GL_{n,\C}$ is independent 
of $\lambda$  \cite{Ser81}. 
\item The \emph{formal bi-character} (Definition~\ref{formchar}(2)) of the reductive subgroup $\bG_{\lambda,\C}\subset\GL_{n,\C}$ 
is independent of $\lambda$  \cite{Hui13}. 
In particular, the formal character of the semisimple subgroup $\bG_{\lambda,\C}^{\der}\subset\GL_{n,\C}$ is independent of $\lambda$.
\end{enumerate}
We say that the compatible system \eqref{cs} is \emph{connected} if $\bG_\lambda$
is connected for some $\lambda$ (hence for all $\lambda$, by assertion (A)).
Via assertion (C), we can connect the $\lambda$-independence of $\bG_\lambda\hookrightarrow\GL_{n,\overline E_\lambda}$ with the terminology  
in $\mathsection\ref{s1.1}$ on rectangular representations (that depend solely on the formal character of the semisimple part)
as follows.

\begin{enumerate}[(D)]
\item If the faithful representation $\bG_{\lambda,\C}\hookrightarrow\GL_{n,\C}$ is rectangular (Definition \ref{rectforgp}) for one $\lambda$,
this is true for all $\lambda$. Moreover, the set of lengths of the rectangular $\bG_{\lambda,\C}\hookrightarrow\GL_{n,\C}$
is independent of $\lambda$.
\end{enumerate}

\noindent By (D), it makes sense to call the semisimple compatible system $\{\rho_\lambda\}$ \emph{rectangular} (resp. \emph{hypercubic}) if the 
faithful representation $\bG_{\lambda,\C}\hto\GL_{n,\C}$ is rectangular (resp. hypercubic)  for some $\lambda$;
the \emph{set of lengths} $\mathscr L_\rho$  of the rectangular  compatible system $\{\rho_\lambda\}$ is defined to be 
the set of lengths  of $\bG_{\lambda,\C}\hookrightarrow\GL_{n,\C}$.

\subsubsection{}\label{s1.2.2} We now state our results on $\lambda$-independence of algebraic monodromy groups
for certain compatible systems, relating them to the classification results in $\mathsection\ref{s1.1.2}$.
First, as a direct consequence of Corollary \ref{cor_L at most 2 at most 3}(ii) and (C) above, we obtain a positive answer 
(on the identity components $\bG_{\lambda,\C}^\circ$) to question Q3(i) 
for certain rectangular compatible systems.

\begin{thm}\label{thm_main3}
    Let $K$ be a number field and $\{\rho_{\lambda}: \mathrm{Gal}_K \to \mathrm{GL}_n(\overline E_{\lambda})\}_{\lambda \in \Sigma_E}$ 
		be a compatible system of semisimple $\lambda$-adic representations of $K$ defined over a number field $E$. 
		Suppose the following conditions are satisfied.
    \begin{enumerate}[(a)]
				\item $\{\rho_{\lambda}\}$ is rectangular.
				\item The set of lengths $\mathscr L_\rho$ has only even numbers and at most one $2$.
    \end{enumerate}
		Then the algebraic monodromy group $\bG_{\lambda,\C}$ is of type $A_1$ (i.e., $\mathrm{Lie}(\bG_{\lambda,\C}^{\der})=A_1\times\cdots\times A_1$), 
		the identity component $\bG_{\lambda,\C}^\circ$ is irreducible on the ambient space 
		and its conjugacy class in $\mathrm{GL}_{n,\C}$ is independent of $\lambda$. 
		
\end{thm}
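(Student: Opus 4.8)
The plan is to push question Q3(i) down to the semisimple Lie algebra attached to $\bG_{\lambda,\C}$, where Corollary \ref{cor_L at most 2 at most 3}(ii) applies verbatim, and then to reconstruct $\bG_{\lambda,\C}^\circ$ from its derived group together with its (at most one-dimensional) central torus. So first I would extract what the hypotheses say on the Lie algebra: by (D) above, hypothesis (a) forces $\bG_{\lambda,\C}\hto\GL_{n,\C}$ to be rectangular for every $\lambda\in\Sigma_E$, with set of lengths $\mathscr L_\rho$ independent of $\lambda$. Writing $\Lieg:=\mathrm{Lie}(\bG_{\lambda,\C})^{\ss}=\mathrm{Lie}(\bG_{\lambda,\C}^{\der})$ and letting $\psi\colon\Lieg\to\End(\C^n)$ be the restriction of the ambient representation, the fact that $\bG_{\lambda,\C}$ is a closed subgroup of $\GL_{n,\C}$ gives $\mathrm{Lie}(\bG_{\lambda,\C})\subseteq\mathfrak{gl}_n$, so $\psi$ is a \emph{faithful} rectangular representation of the semisimple Lie algebra $\Lieg$ with set of lengths $\mathscr L_\rho$.

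Next I would feed $(\Lieg,\psi)$ into Corollary \ref{cor_L at most 2 at most 3}(ii): hypothesis (b) is precisely its hypothesis, so there is a decomposition $\Lieg=A_1\times\cdots\times A_1$ (one $A_1$-factor per element of $\mathscr L_\rho$) and an isomorphism of representations
\begin{equation*}
(\Lieg,\psi)\;\cong\;\bigotimes_{\ell\in\mathscr L_\rho}\bigl(A_1,\ \mathrm{Sym}^{\ell-1}(\mathrm{Std})\bigr),
\end{equation*}
with $\psi$ irreducible. In particular $\mathrm{Lie}(\bG_{\lambda,\C}^{\der})$ is a product of $A_1$'s, i.e. $\bG_{\lambda,\C}$ is of type $A_1$; and since $\Lieg\subseteq\mathrm{Lie}(\bG_{\lambda,\C}^\circ)$ and $\C^n$ is already $\Lieg$-irreducible, it is irreducible as a module over the connected group $\bG_{\lambda,\C}^\circ$.

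It remains to prove $\lambda$-independence of the conjugacy class of $\bG_{\lambda,\C}^\circ\subset\GL_{n,\C}$, which I would do in two steps. For the derived group: the isomorphism class of $(\Lieg,\psi)$ above depends only on $\mathscr L_\rho$, and a connected subgroup of $\GL_{n,\C}$ is determined by its Lie subalgebra, so the conjugacy class of $\bG_{\lambda,\C}^{\der}$ in $\GL_{n,\C}$ is determined by $\mathscr L_\rho$, hence independent of $\lambda$. For the center: set $\bZ:=\bZ(\bG_{\lambda,\C}^\circ)^\circ$; by Schur's lemma $\bZ$ acts on the irreducible module $\C^n$ through scalars, and faithfulness of $\bG_{\lambda,\C}\hto\GL_{n,\C}$ makes the induced map $\bZ\hto\G_m$ (scalar matrices) injective, so $\bZ$ is either trivial or the full scalar torus; which alternative occurs is recorded by $\dim\bT_{\lambda,\C}-\dim\bT_{\lambda,\C}^{\ss}\in\{0,1\}$ for a chain $\bT_{\lambda,\C}^{\ss}\subseteq\bT_{\lambda,\C}$ as in (C) above, and that number is $\lambda$-independent by (C). Since $\bG_{\lambda,\C}^\circ$ is the almost-direct product $\bG_{\lambda,\C}^{\der}\cdot\bZ$, it equals, uniformly in $\lambda$, either $\bG_{\lambda,\C}^{\der}$ or $\bG_{\lambda,\C}^{\der}\cdot\G_m$ (scalars), so its conjugacy class in $\GL_{n,\C}$ is $\lambda$-independent, the conjugacy class of $\bG_{\lambda,\C}^{\der}$ being so and the scalar torus being canonical.

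The substantive input is Corollary \ref{cor_L at most 2 at most 3}(ii), hence ultimately Theorem \ref{main1}; the main obstacle left afterwards is the bookkeeping around the central torus, handled cleanly by Schur's lemma together with the $\lambda$-independence of the formal bi-character from (C).
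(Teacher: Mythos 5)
Your proof is correct and follows exactly the route the paper intends: Theorem \ref{thm_main3} is presented there as a direct consequence of Corollary \ref{cor_L at most 2 at most 3}(ii) together with the $\lambda$-independence of the formal bi-character (statement (C), i.e.\ Theorem \ref{thm_formal bi-char indep}(iii)). Your write-up merely makes explicit the details the paper leaves implicit, namely that isomorphic faithful representations of $\Lieg$ give conjugate connected subgroups and that the central torus is pinned down by Schur's lemma plus the $\lambda$-independent quantity $\dim\bT_{\lambda,\C}-\dim\bT_{\lambda,\C}^{\ss}$.
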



Now assume $K$ is totally real. 
By adopting the techniques in \cite{Hui23a,Hui23b},\cite{HL24} (combining various Galois theoretic
and representation theoretic results), we obtain the following $\lambda$-independence results.

\begin{thm}\label{thm_main2}
    Let $K$ be a totally real field and $\{\rho_{\lambda}: \mathrm{Gal}_K \to \mathrm{GL}_n(\overline{E}_{\lambda})\}_{\lambda \in \Sigma_E}$ be a compatible system of semisimple $\lambda$-adic representations of $K$ defined over a number field $E$. 
		Suppose the following conditions are satisfied.
    \begin{enumerate}[(a)]
		\item $\{\rho_{\lambda}\}$ is strictly compatible and regular.
        \item $\{\rho_{\lambda}\}$ is connected.
        \item $\{\rho_{\lambda}\}$ is rectangular.
				\item  The set of lengths $\mathscr L_\rho$ has at most one $2$ and at most one $3$.
    \end{enumerate}
    Then the algebraic monodromy group $\bG_{\lambda,\C}$ is of type $A_1$  
		and its conjugacy class in $\mathrm{GL}_{n,\C}$ is independent of $\lambda$.
\end{thm}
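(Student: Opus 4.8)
The strategy is to use the classification of faithful rectangular representations (Theorem \ref{main1}) to force $\bG_{\lambda,\C}$ into type $A_1$, then to identify its derived group from the $\lambda$-independence of the formal bi-character, and finally to rigidify the remaining data by the Galois-theoretic techniques of \cite{Hui23a,Hui23b,HL24}. \emph{First}, by hypothesis (c) the faithful representation $\bG_{\lambda,\C}\hookrightarrow\GL_{n,\C}$ is rectangular for one, hence by assertion (D) for every, $\lambda$, with a $\lambda$-independent set of lengths $\mathscr L_\rho$; by hypothesis (d) the set $\mathscr L_\rho$ has at most one $2$ and at most one $3$, so Corollary \ref{cor_L at most 2 at most 3}(i) shows that $\mathrm{Lie}(\bG_{\lambda,\C}^{\der})$ is a product of copies of $A_1$, and with hypothesis (b) and assertion (A) we get that $\bG_{\lambda,\C}=\bG_{\lambda,\C}^{\circ}$ is connected reductive of type $A_1$ for every $\lambda$; in particular the number $m$ of $A_1$-factors is $\lambda$-independent, and indeed $m=|\mathscr L_\rho|$ since the rank-$2$ factor (c) of Theorem \ref{main1}(ii) would contribute two $2$'s to $\mathscr L_\rho$ and is therefore excluded by (d).

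\emph{Next}, I would pin down $\bG_{\lambda,\C}^{\der}\hookrightarrow\GL_{n,\C}$ up to conjugacy, independently of $\lambda$. By assertion (C) the conjugacy class of the chain $\bT_{\lambda,\C}^{\ss}\subset\bT_{\lambda,\C}\subset\GL_{n,\C}$ is independent of $\lambda$, and hence so is the multiset $\Xi$ of weights of $V\otimes\C$ restricted to $\bT_{\lambda,\C}^{\ss}$, regarded inside the character lattice of $\bT_{\lambda,\C}^{\ss}$. The restriction $V\otimes\C|_{\bG_{\lambda,\C}^{\der}}$ is a faithful rectangular representation of type $A_1$, so by Theorem \ref{main1}(ii) it is an external tensor product $\bigotimes_{j=1}^{m}V_{1,j}$ in which each $V_{1,j}$ is either $\mathrm{Sym}^r(\mathrm{Std})$ (case (a)) or $\mathrm{Sym}^{r_1}(\mathrm{Std})\oplus\mathrm{Sym}^{r_2}(\mathrm{Std})$ with $|r_1-r_2|=1$ (case (b)); by Theorem \ref{main1}(v) this decomposition is unique up to permutation, and the one length-coincidence among admissible factors, namely that $(A_1,\mathrm{Sym}^2(\mathrm{Std}))$ and $(A_1,\mathbb 1\oplus\mathrm{Std})$ both have length $3$, is resolved by $\Xi$ itself since these two representations have distinct formal characters in the character lattice. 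As $\Xi$ is multiplicity-free, the centralizer of $\bT_{\lambda,\C}^{\ss}$ in $\GL_{n,\C}$ is the diagonal torus, so this data determines the Lie subalgebra $\mathrm{Lie}(\bG_{\lambda,\C}^{\der})\subset\mathfrak{gl}_{n,\C}$, and hence the connected subgroup $\bG_{\lambda,\C}^{\der}\subset\GL_{n,\C}$, up to conjugacy and independently of $\lambda$.

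\emph{Finally}, since $\bG_{\lambda,\C}$ is connected reductive we have $\bG_{\lambda,\C}=\langle\bG_{\lambda,\C}^{\der},\bT_{\lambda,\C}\rangle$, and $\bT_{\lambda,\C}$ is already placed by the chain above; what remains is to control, uniformly in $\lambda$, how the central torus of $\bG_{\lambda,\C}$ twists the irreducible constituents of $V\otimes\C$, of which there may be several precisely because the type (b) factors are reducible. This is where the hypotheses that $K$ is totally real and that $\{\rho_\lambda\}$ is $\GL_n(E_\lambda)$-valued, strictly compatible, and regular enter, and I would follow \cite{Hui23a,Hui23b,HL24}: strict compatibility makes the decomposition of $\{\rho_\lambda\}$ into irreducible compatible subsystems, and the abelian determinant characters of those subsystems, independent of $\lambda$ (the abelian case being Serre's theorem \cite{Ser98}), which rigidifies the central twists, while regularity together with total reality provides the essential self-duality and (potential) automorphy input on which the type-$A_1$ arguments of \cite{HL24} rely in order to propagate the structure uniformly over $\lambda$. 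Assembling this with the previous two steps gives that the conjugacy class of $\bG_{\lambda,\C}\subset\GL_{n,\C}$ is independent of $\lambda$. I expect this last step to be the main obstacle: when the system is irreducible, as in Theorem \ref{thm_main3}, Corollary \ref{cor_L at most 2 at most 3} forces an irreducible external tensor product of symmetric powers of $\mathrm{Std}$ and the formal bi-character alone already rigidifies everything, but under the weaker hypothesis (d) the representation can be genuinely reducible, and then the ``gluing'' of the constituents --- which $A_1$-factor acts with which symmetric power on which constituent, and with which central twist --- is not seen by the formal bi-character, so transporting it across $\lambda$ is exactly what forces one to invoke strict compatibility and the automorphy-type machinery available over totally real fields.
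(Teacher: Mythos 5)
Your first step (deducing from (b), (c), (d), assertions (A), (C)(D) and Corollary \ref{cor_L at most 2 at most 3}(i) that $\bG_{\lambda,\C}$ is connected of type $A_1$ for every $\lambda$, with a $\lambda$-independent number of $A_1$-factors) agrees with the paper. The gap is in your second step: the formal bi-character does \emph{not} determine $\bG_{\lambda,\C}^{\der}\subset\GL_{n,\C}$ up to conjugacy, even granting the classification. Concretely, for each odd length $2r+1$ both $(A_1,\mathrm{Sym}^{2r}(\mathrm{Std}))$ and $(A_1,\mathrm{Sym}^{r}(\mathrm{Std})\oplus\mathrm{Sym}^{r-1}(\mathrm{Std}))$ are admissible factors in Theorem \ref{main1}(ii), and their images of the maximal torus in $\GL_{2r+1}$ are both conjugate to $\{\diag(s^{r},s^{r-1},\ldots,s^{-r})\}$; the multisets $\Xi\subset\X(\bT_\C^{\ss})$ coincide, yet the two subgroups are not conjugate (one acts irreducibly, the other does not), and their root systems inside $\X(\bT_\C^{\ss})$ differ by a factor of $2$. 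So your claim that the two length-$3$ candidates ``have distinct formal characters in the character lattice'' is false, and condition (d) does not exclude this ambiguity for lengths $5,7,\ldots$, which may occur with multiplicity. Hence assertion (C) alone cannot pin down $\bG_{\lambda,\C}^{\der}$; what must be shown to be $\lambda$-independent is the root system $\Phi_\lambda\subset\X(\bT_\C^{\ss})$, after which Proposition \ref{invroot} gives the conjugacy (this criterion applies to the whole group $\bG_{\lambda,\C}$, so your worry about central twists in the third step is already absorbed there).

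Your third step correctly senses that automorphy input is needed, but does not supply the mechanism, and the assertion that strict compatibility makes the decomposition into irreducible compatible subsystems $\lambda$-independent is not a known theorem (it is essentially the \emph{conclusion} of Corollary \ref{cor1.8}(ii), not an available input). The paper's route is: the root system of $\bG_{\lambda,\C}^{\der}$ is the set of nonzero weights of the adjoint representation $\psi_\lambda$ of $\Gal_K$ on $\mathrm{Lie}(\bG_\lambda^{\der})$, which by the type-$A_1$ property splits into $r$ three-dimensional pieces with monodromy $\SO_3$. One proves each piece is regular (this is where regularity of $\{\rho_\lambda\}$ and the Hodge--Tate lifting of Proposition \ref{prop_GL2r lift} enter), applies the potential automorphy result Proposition \ref{prop_SO3 potential automorphy} to place $\psi_{\lambda_0}$ inside an auxiliary strictly compatible system $\{\phi_\lambda\}$ with monodromy $\SO_3^r$ for all $\lambda$, and then feeds $\{\rho_\lambda\oplus\phi_\lambda\}$ into Theorem \ref{refine} and Proposition \ref{strategy} to transport the root system, hence the conjugacy class of $\bG_{\lambda,\C}$, across all $\lambda$. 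Without constructing this auxiliary system your argument cannot close.
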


\begin{remark}\label{remark1.7}
Here are a few remarks about the conditions of the theorem.
Let $\pi$ be a regular algebraic, polarized, cuspidal automorphic representation of $\GL_n(\A_K)$,
where $K$ is totally real.
If the compatible system is associated to $\pi$ (see e.g., \cite{BLGGT14}),
then (a) holds. If, for some $\lambda$, the semisimple group $\bG_{\lambda,\C}^{\der}$
is of type $A_1$ and acts irreducibly on the ambient space, then (c) holds.
If (c) holds and the dimension $n$ is neither divisible by $4$ nor by $9$, then obviously (d) holds.
Note that (b),(c),(d) and Corollary \ref{cor_L at most 2 at most 3}(i) imply that $\bG_{\lambda,\C}$ is connected and 
of type $A_1$ for all $\lambda$. Note that the representation $\bG_{\lambda,\C}\hto \mathrm{GL}_{n,\C}$ can be reducible.

\end{remark} 

\begin{cor}\label{cor1.8}
    Let $K$ be a totally real field and $\{\rho_{\lambda}: \mathrm{Gal}_K \to \mathrm{GL}_n(\overline{E}_{\lambda})\}_{\lambda \in \Sigma_E}$ be a compatible system of semisimple $\lambda$-adic representations of $K$ defined over a number field $E$. 
		Suppose the following conditions are satisfied.
    \begin{enumerate}[(a)]
        \item $\{\rho_{\lambda}\}$ is strictly compatible and regular.
        \item $\{\rho_{\lambda}\}$ is connected.
        \item For some $\lambda_0$, $\bG_{\lambda_0,\C}^{\der}$ is of type $A_1$ and is irreducible on the ambient space.
				\item The set of lengths $\mathscr L_\rho$ has at most one $2$ and at most one $3$.
    \end{enumerate}
    Then the following statements hold.
    \begin{enumerate}[(i)]
        \item The conjugacy class of $\bG_{\lambda,\C}$ in $\mathrm{GL}_{n,\C}$ is independent of $\lambda$.
        \item The representation $\rho_{\lambda}$ is absolutely irreducible for all $\lambda$.
		\item The residual representation of $\rho_{\lambda}$ is absolutely irreducible for almost all $\lambda$.
   \end{enumerate}
\end{cor}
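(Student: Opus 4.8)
The plan is to derive the corollary from Theorem \ref{thm_main2}, the representation-theoretic facts recorded in $\mathsection\ref{s1.1}$, and the analysis of residual images that enters the proof of Theorem \ref{thm_main2}.

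First I would check that the hypotheses (a)--(d) of the corollary imply those of Theorem \ref{thm_main2}. Conditions (a) and (b) are identical in the two statements; condition (c) here forces $\{\rho_\lambda\}$ to be rectangular, which is condition (c) of Theorem \ref{thm_main2}, after which $\mathscr L_\rho$ is defined and condition (d) likewise matches. Indeed, writing $\bG_{\lambda_0,\C}^{\der}\cong(A_1)^m$, the restriction $\mathrm{Lie}(\bG_{\lambda_0,\C}^{\der})\to\End(\C^n)$ of the faithful embedding $\bG_{\lambda_0,\C}\hookrightarrow\GL_{n,\C}$ is a faithful irreducible representation of $(A_1)^m$, hence an external tensor product $\bigotimes_{j=1}^m\mathrm{Sym}^{r_j}(\mathrm{Std})$ with every $r_j\geq 1$; its weights form $Z_{r_1}\times\cdots\times Z_{r_m}$, each of multiplicity one, so $\bG_{\lambda_0,\C}\hookrightarrow\GL_{n,\C}$ is rectangular with set of lengths $\{r_1+1,\dots,r_m+1\}$ --- the basic example noted in $\mathsection\ref{s1.1.1}$ --- and by assertion (D) this holds for every $\lambda$, with $\mathscr L_\rho=\{r_1+1,\dots,r_m+1\}$. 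Applying Theorem \ref{thm_main2} now yields statement (i) of the corollary and that $\bG_{\lambda,\C}$ is of type $A_1$ for all $\lambda$; together with hypothesis (b), each $\bG_{\lambda,\C}$ is a connected reductive group of type $A_1$.

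For (ii), write $\bG_{\lambda_0,\C}=\bG_{\lambda_0,\C}^{\der}\cdot Z(\bG_{\lambda_0,\C})^\circ$; the central torus $Z(\bG_{\lambda_0,\C})^\circ$ commutes with the irreducibly acting $\bG_{\lambda_0,\C}^{\der}$, hence acts on $\C^n$ by scalars by Schur's lemma, and therefore preserves every subspace. So the $\bG_{\lambda_0,\C}$-stable subspaces of $\C^n$ are exactly the $\bG_{\lambda_0,\C}^{\der}$-stable ones, and $\bG_{\lambda_0,\C}$ acts irreducibly. By (i) the conjugacy class of $\bG_{\lambda,\C}$ in $\GL_{n,\C}$ is independent of $\lambda$, so $\bG_{\lambda,\C}$ --- hence $\bG_\lambda$ --- acts irreducibly on the ambient space for every $\lambda$; since $\rho_\lambda(\Gal_K)$ is Zariski dense in $\bG_\lambda$ it has the same invariant subspaces, whence $\rho_\lambda$ is absolutely irreducible for all $\lambda$.

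For (iii) I would transport this irreducibility to the residual representation $\bar\rho_\lambda$, which is defined up to semisimplification after conjugating $\rho_\lambda$ into $\GL_n(\cO_{E_\lambda})$ and reducing modulo $\lambda$. Here I would run the argument underlying Theorem \ref{thm_main2} (the techniques of \cite{Hui23a,Hui23b,HL24}, combining strict compatibility and regularity --- hypothesis (a) --- with the $\lambda$-independence of the formal bi-character, assertion (C), and Larsen--Pink/Nori-type structure theory for finite linear groups): for almost all $\lambda$ the residual image $\bar\rho_\lambda(\Gal_K)$ has the same invariant subspaces of $\overline{\F}_\lambda^n$ as a connected reductive group $\underline{G}_\lambda$ over $\overline{\F}_\lambda$ whose derived subgroup is of type $A_1$ and whose $n$-dimensional representation realizes the same formal character $\bigotimes_{j=1}^m\mathrm{Sym}^{r_j}(\mathrm{Std})$ in characteristic $\mathrm{char}(\F_\lambda)$. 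Once $\mathrm{char}(\F_\lambda)>\max_j r_j$, each $\mathrm{Sym}^{r_j}(\mathrm{Std})$ of $\SL_2$ remains irreducible, hence so does the external tensor product and $\underline{G}_\lambda$; therefore $\bar\rho_\lambda$ is absolutely irreducible for almost all $\lambda$. Parts (i) and (ii) are formal consequences of Theorem \ref{thm_main2}; the genuine obstacle is the comparison in (iii) between the characteristic-zero group $\bG_\lambda$ and the algebraic envelope of the residual image, which is precisely the ingredient already required for Theorem \ref{thm_main2} --- beyond it, the only extra input is the elementary irreducibility of $\mathrm{Sym}^r$ of $\SL_2$ in characteristic $p>r$.
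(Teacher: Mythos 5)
Your proof is correct, and for parts (i) and (ii) it coincides with the paper's argument: condition (c) makes the system rectangular (exactly the observation recorded in Remark \ref{remark1.7}), Theorem \ref{thm_main2} then gives (i), and (ii) follows because $\bG_{\lambda_0,\C}$ acts irreducibly and the conjugacy class of $\bG_{\lambda,\C}$ is independent of $\lambda$. The only divergence is in (iii). The paper does not re-run any Larsen--Pink/Nori-type analysis there: it applies the packaged big-image result, Theorem \ref{big}(i), to $\sigma_\lambda=\rho_\lambda\otimes\overline E_\lambda$ itself, which by (i) and (ii) is a type $A$ (indeed type $A_1$) irreducible subrepresentation, the hypotheses \ref{big}(a),(b) being automatic for a strictly compatible system by Proposition \ref{cond}. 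Your sketch for (iii) instead re-derives, in outline, the content of that cited theorem --- producing an algebraic envelope of the residual image with the prescribed formal character and then checking that $\mathrm{Sym}^{r_j}$ of $\SL_2$ stays irreducible in characteristic $p>r_j$. That is the right underlying mathematics, but as written it rests on a substantial unproved comparison between $\bar\rho_\lambda(\Gal_K)$ and a connected reductive group over $\overline\F_\lambda$; in the paper's framework this comparison is exactly what Theorem \ref{big}(i) (i.e.\ \cite[Theorem 1.2]{Hui23a}) supplies as a black box, and citing it directly makes the step immediate while subsuming your extra mod-$p$ representation theory.
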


\begin{remark}
When we replace \ref{cor1.8}(a) by the stronger condition that $\{\rho_\lambda\}$
is associated to the automorphic representation $\pi$ in Remark \ref{remark1.7}, 
this result is obtained previously in \cite[Theorem~1.1]{HL24}.
Therefore, Corollary \ref{cor1.8} can be viewed as 
a partial generalization of \cite[Theorem~1.1]{HL24} from automorphic compatible systems 
to regular strictly compatible systems.
\end{remark}

\subsection{Organization of the article}\label{s1.3}
We describe the remaining sections of the article.

Section $2$ is devoted to the proof of the classification theorem of rectangular representations (Theorem \ref{main1}).
It is purely representation theoretic and can be read independently.
We first present the basics (e.g., the root system and Weyl group) of the simple Lie algebra $B_n$ in $\mathsection\ref{s2.1}$ and 
define rectangular representations in $\mathsection\mathsection\ref{s2.2}-\ref{s2.3}$.
We perform certain reductions of the classification theorem in $\mathsection\mathsection\ref{s2.4}-\ref{s2.6}$,
 where $\mathsection\ref{s2.1}$ and a result of Wright on minimal embeddings of symmetric groups \cite{Wri75}
are critical. Then Theorem \ref{main1}(ii) is proven in $\mathsection\ref{s2.7}$.
By using the classification 
of irreducible multiplicity-free representations of simple Lie algebra due to Howe \cite{How92},
Theorem \ref{main1}(iii) is obtained in $\mathsection\ref{s2.8}$ via a case-by-case analysis.
Finally, we prove Theorem \ref{main1}(iv),(v) (the uniqueness part) in $\mathsection\ref{s2.9}$.

Section $3$ is devoted to the proof of Theorem \ref{thm_main2} (and Corollary \ref{cor1.8}), 
concerning the $\lambda$-independence of algebraic monodromy groups $\bG_{\lambda,\C}\subset\GL_{n,\C}$
of a strictly compatible system. We briefly describe the strategy.
Building on previous works \cite{Hui23a,Hui23b},\cite{HL24}, 
we establish Proposition \ref{strategy} which asserts that
the $\lambda$-independence of $\bG_{\lambda,\C}\subset\GL_{n,\C}$ holds
if some auxiliary compatible system $\{\phi_\lambda\}$,
derived from the adjoint representations on $\mathrm{Lie}(\bG_{\lambda}^{\der})$, exists.
The proof of Proposition \ref{strategy} hinges on 
two key components:
\begin{itemize}
\item a refinement (Theorem \ref{refine}) of the $\lambda$-independence of the formal (bi-)character of a compatible system
to the direct sum of multiple compatible systems, such as $\{\rho_\lambda\oplus\phi_\lambda\}$, and
\item  an invariance of roots criterion (Proposition \ref{invroot}) that guarantees
two connected reductive subgroups of $\GL_{n,\C}$ sharing
the same maximal torus
are conjugate in $\GL_{n,\C}$.
\end{itemize}

\noindent To obtain  Theorem \ref{thm_main2}, it suffices
to construct the auxiliary compatible system $\{\phi_\lambda\}$ 
as specified in Proposition \ref{strategy}.
Under the conditions \ref{thm_main2}(c),(d)
(which require the compatible system to be rectangular with a certain set of lengths), one deduces by Corollary \ref{cor_L at most 2 at most 3}(i) that
$\bG_{\lambda,\C}$ is of type $A_1$ for all $\lambda$.
This result, together with the conditions \ref{thm_main2}(a),(b), enable us to construct $\{\phi_\lambda\}$
by using various Galois lifting, big image, potential automorphy, and $\lambda$-independence results.
Roughly speaking, the type $A_1$ assertion on $\bG_{\lambda,\C}$ ensures
the three-dimensional
factors of the Galois representation on $\mathrm{Lie}(\bG_{\lambda_0}^{\der})$ (for some $\lambda_0$) satisfy the ``odd essential self-duality'' and ``irreducibility'' conditions in the potential automorphy theorem (Theorem \ref{thm_BLGGT14 Thm C}), which is used to extend $\mathrm{Lie}(\bG_{\lambda_0}^{\der})$ to the compatible system $\{\phi_\lambda\}$.

We present basic notation for Galois representations in $\mathsection\ref{s3.1}$ 
and define (strictly/automorphic) compatible system in $\mathsection\ref{s3.2}$.
The required techniques are explained separately throughout $\mathsection\mathsection\ref{s3.3}-\ref{s3.8}$.
By adopting the strategy above, we obtain Theorem \ref{thm_main2} and Corollary \ref{cor1.8} 
 finally in $\mathsection\mathsection\ref{s3.9}-\ref{s3.10}$.

\section{Classification of rectangular representations}
\subsection{Root system and Weyl group of $B_n$}\label{s2.1}
\begin{defi}\label{std}
For $n\in\N$, equip $\R^n$ with the standard Euclidean metric and let $\mathcal B:=\{e_1,e_2,...,e_n\}$ be 
the standard orthonormal basis of $\R^n$. A subspace of $\R^n$ is said to be \emph{standard}
if it is spanned by some elements in $\mathcal B$.
\end{defi}

 The following lemma is obvious.

\begin{lemma}\label{stdsum}
A subspace of $\R^n$ generated by some standard subspaces is standard.
\end{lemma}

We describe the root systems $\Phi$ of the classical Lie algebras 
$B_n=\mathfrak{so}(2n+1)$ ($n\geq 1$), $C_n=\mathfrak{sp}(2n)$ (for $n\geq 1$), 
and $D_n=\mathfrak{so}(2n)$ (for $n\geq 2$) as subset of the Euclidean space 
$\R^n$ as follows:
\begin{align}\label{rootsys}
\begin{split}
\Phi_{B_n}:&=\{\pm e_i, \pm e_i\pm e_j\in\R^n:~ 1\leq i< j\leq n\},\\
\Phi_{C_n}:&=\{\pm 2e_i, \pm e_i\pm e_j\in\R^n:~ 1\leq i< j\leq n\},\\
\Phi_{D_n}:&=\{\pm e_i\pm e_j\in\R^n:~ 1\leq i< j\leq n\}.
\end{split}
\end{align}

\vspace{.1in}
\noindent We present the figures of $\Phi_{B_2}$, $\Phi_{B_3}$ and $\Phi_{D_3}=\Phi_{A_3}$
and establish some useful facts about $\Phi_{B_n}$.

\begin{figure}[h]
\begin{tabular}{cccccccc}
\tikzfig{fig1}
& & & & & & &
\tikzfig{fig2}
\end{tabular}
\caption{$\Phi_{B_2}$ (left) and $\Phi_{B_3}$ (right, and $\Phi_{D_3}$ consists of long roots in $\Phi_{B_3}$)}
\end{figure}

\begin{prop}\label{Bnroot} 
The following assertions hold about the root system $\Phi_{B_n}$ of $B_n$.
\begin{enumerate}[(i)]
\item Any root $\alpha\in\Phi_{B_n}$ in \eqref{rootsys} has at most two non-zero coordinates.
\item If $\alpha\in \Phi_{B_n}$, then $-\alpha\in \Phi_{B_n}$ and the line spanned by $\alpha$ has exactly two roots. 
\item Suppose $n\geq 2$, and let $\alpha=\epsilon_1e_i+\epsilon_2e_j$ and $\beta=\epsilon_3 e_k+\epsilon_4 e_l$ 
be two linearly independent long roots in $\Phi_{B_n}$ (where 
$\epsilon_1,\epsilon_2,\epsilon_3,\epsilon_4\in\{\pm1\}$), 
$P$ be the plane spanned by $\{\alpha,\beta\}$,
and $d$ be the dimension of the space spanned by $\{e_i,e_j,e_k,e_l\}$. If $d=4$ (resp. $d=3$),
then $P$ has exactly $4$ (resp. $6$) roots and all of them are long.
\item If $n\geq 2$ and $P$ is a plane in $\R^n$ 
containing (at least) $8$ roots in $\Phi_{B_n}$,
then $P$ is standard and has exactly $8$ roots ($4$ long and $4$ short).
\item If $n\geq 3$ and $Q$ is a $3$-space in $\R^n$ containing (at least) $12$ long roots in $\Phi_{B_n}$,
then $Q$ is either standard or there exist a four-dimensional standard subspace $U$ with 
basis $\{e_i,e_j,e_l,e_s\}$ and $\delta_1,\delta_2,\delta_3\in\{\pm 1\}$ such that $Q$ is the orthogonal complement of 
$v:=e_i+\delta_1 e_j+\delta_2e_l+\delta_3e_s$ in $U$.
\end{enumerate}
\end{prop}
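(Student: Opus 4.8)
Assertions (i) and (ii) are immediate from the explicit list \eqref{rootsys}: every element of $\Phi_{B_n}$ has at most two nonzero entries, and all entries of roots lie in $\{0,\pm 1\}$, so no root is a nontrivial scalar multiple of another and $\R\alpha\cap\Phi_{B_n}=\{\pm\alpha\}$. For (iii) I will write $\alpha=\epsilon_1 e_i+\epsilon_2 e_j$, $\beta=\epsilon_3 e_k+\epsilon_4 e_l$ and analyze an arbitrary root $\gamma=a\alpha+b\beta$ of $P$. If $d=4$ (the four indices distinct), then $\gamma$ has a nonzero $\{i,j\}$-entry whenever $a\neq 0$ and a nonzero $\{k,l\}$-entry whenever $b\neq 0$; by (i) not both occur, and (ii) leaves only $\pm\alpha,\pm\beta$, all long, so $P$ has exactly $4$ roots. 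If $d=3$, I may take $k=j$ with $i,j,l$ distinct; then a root with $a,b$ both nonzero must annihilate its $j$-entry $a\epsilon_2+b\epsilon_3$, which together with (ii) forces $(a,b)=\pm(1,-\epsilon_2\epsilon_3)$ and produces exactly one further $\pm$-pair $\pm(\epsilon_1 e_i-\epsilon_2\epsilon_3\epsilon_4 e_l)$ of long roots; as every root of $P$ lies in one of the three cases $a=0$, $b=0$, or both nonzero, $P$ has exactly the $6$ (pairwise distinct, by their supports) long roots $\pm\alpha,\pm\beta,\pm(\epsilon_1 e_i-\epsilon_2\epsilon_3\epsilon_4 e_l)$. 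The only remaining possibility, $d=2$ (i.e. $\{i,j\}=\{k,l\}$), forces $P=\Span(e_i,e_j)$.

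From (iii) I extract a bound used below: \emph{a plane in $\R^n$ contains at most $6$ long roots of $\Phi_{B_n}$}. Indeed, if a plane $P$ contains two linearly independent long roots, then by the case analysis just made it has exactly $6$ or $4$ roots (when $d=3$ or $d=4$), or it is a standard plane $\Span(e_i,e_j)$ with exactly $4$ long roots (when $d=2$); otherwise all long roots of $P$ lie on one line, so there are at most $2$. For (iv), let $P$ be a plane with $8$ roots; since roots come in $\pm$-pairs, the numbers $L$ of long and $S$ of short roots of $P$ are even and $L+S=8$. If $S\leq 2$ then $L\geq 6$, so $P$ has two linearly independent long roots and hence (by the previous sentence, the case $d=2$ being excluded by $L\geq 6>4$) exactly $6$ roots in total, contradicting $L+S=8$. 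Thus $S\geq 4$, so $P$ contains $\pm e_i,\pm e_j$ with $i\neq j$, whence $P=\Span(e_i,e_j)$ is standard and $P\cap\Phi_{B_n}=\Phi_{B_2}$ has exactly $8$ roots, $4$ long and $4$ short.

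For (v), let $I$ be the set of indices occurring with nonzero entry in some long root of $Q$, and set $W:=\Span(e_a:a\in I)$, a standard subspace of dimension $|I|$ containing all $12$ long roots of $Q$. If $\dim(Q\cap W)\leq 2$ then $Q\cap W$ is a plane carrying $\geq 12>6$ long roots, which is impossible; hence $Q\subseteq W$, and in particular the $12$ long roots span the $3$-space $Q$ (else they would lie in a plane). Form the support graph $G$ on vertex set $I$, with an edge $\{a,b\}$ whenever some long root of $Q$ has support $\{a,b\}$; write $S_C$ for the span of the long roots supported in a connected component $C$, so that $3=\dim Q=\sum_C\dim S_C$ with every $\dim S_C\geq 1$. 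A component with $\dim S_C=1$ carries at most $2$ long roots and one with $\dim S_C=2$ at most $6$, so the multiset $(\dim S_C)_C$ cannot be $(1,1,1)$ or $(2,1)$ (these give $\leq 6$ and $\leq 8$ roots) and must be the single component $(3)$; since the vectors attached to the edges of a spanning tree of a connected graph are linearly independent (peel leaves), $3=\dim S_C\geq|I|-1$, i.e. $|I|\leq 4$.

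If $|I|=3$ then $\dim W=3=\dim Q$ gives $Q=W$, which is standard. If $|I|=4$, say $I=\{i,j,l,s\}$, then $Q$ is a hyperplane of $W\cong\R^4$, so $Q=v^\perp\cap W$ for a nonzero $v\in W$, unique up to scaling and supported in $I$. For a pair $\{a,b\}\subseteq I$ the number of long roots $\pm e_a\pm e_b$ lying in $Q=v^\perp$ equals $4$ if $v_a=v_b=0$, equals $2$ if $v_a,v_b\neq 0$ with $|v_a|=|v_b|$, and equals $0$ otherwise; a $v$ with one or two zero entries supports at most $6$ such roots, so summing over the $6$ pairs the total $12$ forces $v$ to be either a multiple of a single $e_a$ — in which case $Q=\Span(e_b:b\in I\setminus\{a\})$ is a standard $3$-space whose long roots avoid the index $a\in I$, a contradiction — or a vector all of whose four entries have equal nonzero absolute value. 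In the second case, rescaling so that $v_i=1$ gives $v=e_i+\delta_1 e_j+\delta_2 e_l+\delta_3 e_s$ with $\delta_1,\delta_2,\delta_3\in\{\pm 1\}$ and $Q=v^\perp\cap U$ with $U:=W$, which is exactly the exceptional description. The main obstacle is this last part of (v): organizing the $|I|=4$ analysis so that the root count cleanly separates the standard configuration from the "hyperplane in a standard $4$-space" configuration; everything else is bookkeeping built on (i)–(iii) and the six-long-roots-per-plane bound.
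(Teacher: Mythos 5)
Your proof is correct; parts (i)--(iii) match the paper's (the paper simply declares them to "follow easily from \eqref{rootsys}", and your explicit computation of the extra pair $\pm(\epsilon_1 e_i-\epsilon_2\epsilon_3\epsilon_4 e_l)$ in the $d=3$ case is exactly what that entails). Where you diverge is in (iv) and especially (v). For (iv) the paper picks a spanning pair of roots and argues by cases on how many of them are long, whereas you run a parity count on the numbers $L,S$ of long and short roots to force $S\geq 4$ and hence two independent short roots; both hinge on (iii) and are essentially equivalent. For (v) the paper chooses three long roots spanning $Q$, sets $d$ equal to the dimension of the span of the six involved basis vectors, and eliminates $d=5,6$ by covering $Q$'s roots with three planes; you instead bound the support set via the spanning-tree argument ($\dim S_C\geq |I|-1$, so $|I|\leq 4$), and then, in the $|I|=4$ case, realize $Q$ as a hyperplane $v^\perp\cap W$ and count the long roots orthogonal to $v$ pair by pair. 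Your route buys a cleaner endgame: the root count $12$ directly forces $v$ to have all four entries of equal nonzero absolute value (the degenerate $v=ce_a$ case being excluded by your definition of $I$), whereas the paper has to argue about which planes $P_{\alpha,\beta}$ etc.\ are standard before extracting $v$. The cost is the extra graph-theoretic scaffolding, and the auxiliary bound "a plane contains at most $6$ long roots", which you do justify carefully (including the $d=2$ standard-plane case). One cosmetic point: since "containing $8$ roots" in (iv) should be read as "at least $8$", write $L+S\geq 8$ rather than $L+S=8$; your contradiction argument is unaffected.
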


\begin{proof}
The assertions (i),(ii),(iii) follow easily from \eqref{rootsys}.

For (iv), we pick two roots $\alpha,\beta$ that span $P$. If $P$ is not standard,
then the pair has either one long root or two long roots. In the first case, we may 
assume $\alpha=e_i$, $\beta= e_j+\epsilon e_k$, 
$\epsilon\in\{\pm1\}$ by (ii), and that $\{e_i,e_j,e_k\}$ spans a $3$-space.
Since $P$ contains $8$ roots, there exists a root $\gamma\in P$ not belonging to $\{\pm \alpha,\pm\beta\}$.
Hence, $\gamma= a\alpha +b\beta$ for some non-zero $a,b\in\R$ but this contradicts (i).
The second case is impossible by (iii) and the assumption that $P$ contains $8$ roots. 
Therefore, we conclude that $P$ is standard and has exactly $8$ roots.

For (v), we pick three long roots $\alpha= e_i+\epsilon_1 e_j,
\beta= e_k+\epsilon_2 e_l,\gamma= e_r+\epsilon_3 e_s$ that span $Q$ by (ii),
where $\epsilon_1,\epsilon_2,\epsilon_3\in\{\pm1\}$.
Let $d$ be the dimension of the space spanned by $\{e_i,e_j,e_k,e_l,e_r,e_s\}$
and $P_{\alpha,\beta}$, $P_{\alpha,\gamma}$, $P_{\beta,\gamma}$ 
be the planes spanned by respectively $\{\alpha,\beta\}$, $\{\alpha,\gamma\}$, and $\{\beta,\gamma\}$.
If $d=5$ (resp. $6$), it follows from linear independence of $\{\alpha,\beta,\gamma\}$ and (i) that 
every root in $Q$ belongs to $P_{\alpha,\beta}\cup P_{\alpha,\gamma}\cup P_{\beta,\gamma}$,
and then (iii) implies that $Q$ has exactly $8$ (resp. $6$) roots. But this is absurd.
If $d=3$, then $Q$ is standard. Now suppose $d=4$. 
If $P_{\alpha,\beta}$ is standard, then $\{i,j\}=\{k,l\}$, $\{i,j\}\cap \{r,s\}=\emptyset$, and (i),(iii) imply that 
$Q$ has exactly $6$ long roots, which is absurd.
Hence, all planes $P_{\alpha,\beta}$, $P_{\alpha,\gamma}$, $P_{\beta,\gamma}$ are not standard.
Without loss of generality, we assume $\{i,j\}\cap \{k,l\}=\{k\}$ 
and $\{i,j\}\cap \{r,s\}=\{r\}$. Then $Q$ is the orthogonal complement of 
$U$ (spanned by $\{e_i,e_j,e_l,e_s\}$) of some $v=e_i+\delta_1 e_j+\delta_2e_l+\delta_3e_s$.
\end{proof}

\begin{remark}
The orthogonal complement of $v=e_i+\delta_1 e_j+\delta_2e_l+\delta_3e_s$ in $U$ in (v) above contains exactly $12$ (long) roots.
\end{remark}

\begin{prop}\label{BnWeyl} 
Let $W_{B_n}$ denote the Weyl group of $B_n$.
\begin{enumerate}[(i)]
\item Let $\mathbb{H}_n:=[-1,1]^n\subset\R^n$ be the standard $n$-dimensional hypercube. 
Then $W_{B_n}$ is 
equal to $\{\phi\in O(\R^n):~\phi~\text{preserves}~\mathbb{H}_n\}$ and satisfies
$$0\to (\Z/2\Z)^n\to W_{B_n}\to S_n\to 0,$$
a (split) short exact sequence of groups where $S_n$ is the symmetric group of $n$ elements.
\item Any reflection $\sigma\in W_{B_n}$ is induced from a root $\alpha\in B_n$, i.e., the $-1$-eigenspace of $\sigma$ is spanned by $\alpha$.
\end{enumerate}
\end{prop}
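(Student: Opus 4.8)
The plan is to prove (i) and (ii) directly from the explicit description of the root system $\Phi_{B_n}$ in \eqref{rootsys} together with standard facts about Weyl groups as generated by reflections in the roots.

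For part (i), I would first recall that the Weyl group $W_{B_n}$ is generated by the reflections $s_\alpha$ for $\alpha\in\Phi_{B_n}$. Using \eqref{rootsys}, the reflection $s_{e_i}$ (in the short root $e_i$) sends $e_i\mapsto -e_i$ and fixes $e_j$ for $j\neq i$; these generate the subgroup $(\Z/2\Z)^n$ of sign changes. The reflection $s_{e_i-e_j}$ (in a long root) swaps $e_i$ and $e_j$; these generate a copy of $S_n$ permuting the basis vectors. Conversely, every generating reflection is of one of these two forms, so $W_{B_n}$ is exactly the group of signed permutations of $\{e_1,\dots,e_n\}$, which visibly fits into the split exact sequence $0\to(\Z/2\Z)^n\to W_{B_n}\to S_n\to 0$ (the splitting being the permutation matrices). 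It remains to identify this group with $\{\phi\in O(\R^n):\phi\text{ preserves }\mathbb H_n\}$. One inclusion is clear: every signed permutation preserves the hypercube $[-1,1]^n$. For the reverse inclusion, an orthogonal map $\phi$ preserving $\mathbb H_n$ must permute the set of $2n$ facets of $\mathbb H_n$; the outward unit normals to these facets are exactly $\pm e_1,\dots,\pm e_n$, so $\phi$ permutes $\{\pm e_1,\dots,\pm e_n\}$, and since $\phi$ is linear and sends $e_i$ to some $\pm e_{\sigma(i)}$ it is a signed permutation. (Equivalently: $\phi$ permutes the $2^n$ vertices of $\mathbb H_n$, which are the $\pm 1$ vectors, and an orthogonal map fixing the vertex set must permute the standard axes up to sign.) This proves (i).

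For part (ii), let $\sigma\in W_{B_n}$ be a reflection, i.e.\ an involution whose $(-1)$-eigenspace is a line $\R v$ and whose $(+1)$-eigenspace is the hyperplane $v^\perp$. By (i), $\sigma$ is a signed permutation, so its eigenvalues and eigenvectors can be read off the cycle structure of the underlying signed permutation. A signed permutation acting on $\R^n$ decomposes $\R^n$ into invariant coordinate subspaces corresponding to the cycles of the permutation (with signs); on each such block the map is either the identity, the negation of one coordinate, a signed cyclic permutation, etc. For the total map to have $(-1)$-eigenspace exactly a line, exactly one block must contribute a single $(-1)$-eigenvector and all other blocks must be the identity. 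Working through the possibilities, the only blocks whose $(-1)$-eigenspace is one-dimensional and whose $(+1)$-eigenspace is everything else are: the single sign change $e_i\mapsto -e_i$ (giving $\sigma=s_{e_i}$, root $\alpha=e_i$), the transposition with no sign $e_i\leftrightarrow e_j$ (giving $\sigma=s_{e_i-e_j}$, root $\alpha=e_i-e_j$), and the transposition with a sign $e_i\leftrightarrow -e_j$ (giving $\sigma=s_{e_i+e_j}$, root $\alpha=e_i+e_j$). In every case the $(-1)$-eigenvector spans a root of $\Phi_{B_n}$, so $\sigma=s_\alpha$ for that root $\alpha$.

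I expect the main obstacle to be part (ii): it requires a careful enumeration of which signed permutations are reflections, i.e.\ ruling out that a product of several sign changes, or a longer signed cycle, could accidentally have a one-dimensional $(-1)$-eigenspace. The cleanest way around this is to diagonalize (over $\C$ or via the block decomposition over $\R$) and observe that a $k$-fold sign change has a $k$-dimensional $(-1)$-eigenspace, a signed $k$-cycle has eigenvalues among the $2k$-th roots of unity so contributes at most one $(-1)$ eigenvalue but then also has non-real eigenvalues forcing extra structure, and so on — reducing everything to the three cases above. Part (i) is routine once one commits to the "signed permutation" description; the only mildly delicate point there is the identification with the hypercube stabilizer, handled by the facet-normal argument.
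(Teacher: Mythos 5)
Your argument is correct, but it takes a different route from the paper, which disposes of both parts by citation: (i) is quoted as well known from \cite[\S 12.1]{Hum78}, and (ii) is deduced from \cite[Proposition~1.14]{Hum90}, the general fact that in a finite reflection group attached to a root system every reflection equals $s_\alpha$ for some root $\alpha$. You instead verify everything by hand from the signed-permutation description of $W_{B_n}$: sign changes from the short roots, transpositions from the long roots $e_i-e_j$, the facet-normal argument for the hypercube stabilizer, and for (ii) the observation that an involutive signed permutation decomposes into $\pm1$ fixed coordinates and signed transposed pairs, so a one-dimensional $(-1)$-eigenspace forces exactly the three shapes $s_{e_i}$, $s_{e_i-e_j}$, $s_{e_i+e_j}$, each cut out by a root of $\Phi_{B_n}$. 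This is a sound and fully self-contained proof; the only gain of the paper's route is brevity and generality (the cited Proposition~1.14 works for any finite reflection group, whereas your case analysis is specific to type $B$). Two small points to tidy: in (i) you say ``every generating reflection is of one of these two forms,'' but the generators also include $s_{e_i+e_j}$ (sending $e_i\mapsto -e_j$), which is neither a pure sign change nor a pure transposition; what you actually need, and what is true, is that every $s_\alpha$ is a signed permutation, while conversely the sign changes and transpositions already generate all signed permutations. And in (ii) the digression about longer signed cycles is unnecessary once you have fixed that a reflection is an involution, since an involutive permutation has only $1$- and $2$-cycles.
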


\begin{proof}
The assertion (i) is well-known (see e.g., \cite[$\mathsection$~12.1]{Hum78}), and
(ii) follows from \cite[Proposition~1.14]{Hum90}.
\end{proof}

\subsection{Rectangular and hypercubic subsets}\label{s2.2}

\begin{defi}\label{rectangle}
For $d\in\Z_{\geq 0}$, define $Z_d:=\{-d,-d+2,-d+4,...,d-2,d\}\subset\Z$.
Let $V_\R$ be an $n$-dimensional real vector space.
\begin{enumerate}[(1)]
\item A subset $\Xi\subset V_\R$ is said to be 
\emph{rectangular} if there is an $\R$-isomorphism $\iota:V_\R\to \R^n$ such that 
$\iota(\Xi)=Z_{d_1}\times Z_{d_2}\times\cdots\times Z_{d_n}$
for some $(d_1,d_2,...,d_n)\in\Z_{\geq0}^n$. In this case, we say that 
$\Xi$ is isomorphic to $Z_{d_1}\times Z_{d_2}\times\cdots\times Z_{d_n}$ and the multiset 
$$\mathscr L_\Xi:=\{\ell_i:=|Z_{d_i}|=d_i+1:~1\leq i\leq n\}$$ 
is called the \emph{set of lengths}\footnote{This notation is well-defined by Proposition \ref{matching} below.} of $\Xi$. 
\item If moreover $d_1=d_2=\cdots=d_n$, then $\Xi$ is said to be \emph{hypercubic}
and $\ell=d_1+1$ is called the \emph{length} of $\Xi$.
\end{enumerate}
\end{defi}

By definition, we have $|\Xi|=\prod_{i=1}^n \ell_i$.

\begin{prop}\label{matching}
Let $0\leq d_1\leq d_2\leq \cdots \leq d_n$ and $0\leq d_1'\leq d_2'\leq \cdots \leq d_n'$ be integers.
Suppose $\phi:\R^n\to\R^n$ is an isomorphism taking
$\Xi= Z_{d_1}\times\cdots\times Z_{d_n}$ to $\Xi'= Z_{d_1'}\times\cdots\times Z_{d_n'}$.
The following assertions hold.
\begin{enumerate}[(i)]
\item The numbers of zeros in the multisets $\{d_1,...,d_n\}$ and $\{d_1',...,d_n'\}$ are equal.
\item For all $1\leq i\leq n$, one has $d_i=d_i'$.
\item If $d_1,d_1'>0$, then $\phi(e_i)=\pm e_j$ where $d_i=d_j'$. 
\end{enumerate}
\end{prop}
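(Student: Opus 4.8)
The plan is to recover the invariants $d_1 \le \cdots \le d_n$ from the combinatorial geometry of the set $\Xi = Z_{d_1} \times \cdots \times Z_{d_n}$, in a way that is manifestly preserved by linear isomorphisms. First I would dispose of assertion (i): the number of zeros among the $d_i$ is $n - \dim(\Span_\R \Xi)$, since $\Xi$ spans exactly the standard subspace corresponding to the nonzero $d_i$'s (this uses that $Z_{d_i}$ spans $\R$ iff $d_i > 0$, and that $0 \in Z_{d_i}$ always, so $\Xi$ contains the origin and $\Span_\R\Xi$ equals the product of the coordinate lines with $d_i > 0$). Since $\phi$ is a linear isomorphism it preserves dimension of the span, giving (i). Replacing $\R^n$ by $\Span_\R\Xi$ and $\phi$ by its restriction, we may henceforth assume $d_1, d_1' > 0$, i.e. all $d_i$ and all $d_i'$ are positive; this also reduces (iii) to the statement that, in this situation, $\phi$ permutes the coordinate axes up to sign and scaling-consistency, and (ii) will follow.

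The key step is to characterize the coordinate lines $\R e_i$ (equivalently, the ``edges'' of the rectangular box) intrinsically. I would look at the difference set, or more directly at nearest-neighbour structure: for $x \in \Xi$, consider the vectors $x - x'$ with $x' \in \Xi$ that are ``primitive'' in $\Xi$ in the sense that they are not a sum of two nonzero elements of the set of differences lying in the same line — concretely, the set of differences of the form $\pm 2 e_i$ (translations by one step along a coordinate axis). A clean way to pin these down: a nonzero $v$ lies on a coordinate axis $\R e_i$ if and only if the number of pairs $(x, x+v) \in \Xi \times \Xi$ is maximal among all $v$ with $x, x+v, x-v$ all achievable, i.e. the ``long directions'' of the box. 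Alternatively, and I think more robustly, I would count: for each line $L$ through the origin, the restriction $\Xi \cap (x + L)$ has size either $1$ or some $d_i + 1$; the lines $L$ for which this size is $> 1$ for every $x \in \Xi$ (equivalently, $\Xi$ is a union of translates of an arithmetic progression along $L$, all of the same length) are exactly the $\R e_i$. Since $\phi$ carries lines to lines and $\Xi$ to $\Xi'$, it must carry the multiset of coordinate lines of $\Xi$ (with their associated ``length'' $d_i + 1$) bijectively to that of $\Xi'$, matching the lengths. This gives $\phi(\R e_i) = \R e_{\tau(i)}$ for a permutation $\tau$ with $d_i = d_{\tau(i)}'$, hence after sorting $d_i = d_i'$ for all $i$, proving (ii); and since $\phi(e_i)$ then lies on $\R e_{\tau(i)}$ and must send the extreme points $\pm d_i e_i / (\text{something})$... wait — more carefully, $\phi$ maps $\Xi \cap \R e_i = \{k e_i : k \in Z_{d_i}\}$ onto $\Xi' \cap \R e_{\tau(i)} = \{k e_{\tau(i)} : k \in Z_{d_{\tau(i)}'}\}$, a set of $d_i + 1 = d_{\tau(i)}' + 1$ equally spaced collinear points, forcing $\phi(e_i) = \pm e_{\tau(i)}$, which is (iii).

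The main obstacle I anticipate is making the ``intrinsic characterization of coordinate axes'' genuinely watertight rather than merely plausible: one must rule out the possibility that some non-axis line $L$ also meets every translate $x + L$ (for $x \in \Xi$) in a long equal-length progression. For a generic box this is clear, but degenerate aspect ratios (e.g. when several $d_i$ coincide, or when one $d_i$ is much larger than the rest) could in principle create accidental ``long'' directions — for instance diagonals of a square face when $d_i = d_j$. I would handle this by working not with a single line but with the full combinatorial incidence data: the coordinate lines are distinguished as those directions $v$ for which $\Xi$ is invariant under translation by $2v$ on the subset where this is defined *and* the ``fibration'' of $\Xi$ by lines parallel to $v$ has all fibers of equal cardinality *and* this cardinality is realized as $d_i+1$ for the appropriate $i$ — combined with a counting/extremality argument (the coordinate directions maximize, among all directions, the total number of collinear triples $x - v, x, x+v$ in $\Xi$, by an AM–GM / convexity estimate on $\prod(d_i+1)$ versus the analogous count in a skew direction). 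Alternatively, if the case analysis gets unwieldy, I would fall back on the fact (provable directly) that $\Xi$, as a finite subset of $\R^n$, has a well-defined notion of ``vertex'' (a point $x$ such that $\Xi \subset x + C$ for a proper cone $C$, which here are the $2^{\#\{i\}}$ corners), that $\phi$ permutes vertices, and that the edges emanating from a vertex are intrinsic — reducing everything to the geometry of the convex hull, which is a box, and boxes have well-defined edge directions up to symmetry.
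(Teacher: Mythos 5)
Your route is genuinely different from the paper's. The paper first replaces $\Xi$ by its set of midpoints, which turns each factor $Z_{d}$ into the full integer interval $[-d,d]\cap\Z$, then peels off the blocks of equal $d_i$ by a two-lemma induction, and finally settles the hypercube case $Z_d^n$ by a pigeonhole argument on the coordinates of $\phi(de_i)$. Your idea is instead to characterize the coordinate axes intrinsically and let $\phi$ permute them. The characterization you worry about is in fact correct, and the ``degenerate aspect ratio'' fear is unfounded; you do not need the extremality count or the convex-hull fallback. Concretely: if $v$ has two nonzero coordinates $v_i,v_j$, choose a corner $x\in\Xi$ with $x_i=d_i\cdot\mathrm{sign}(v_i)$ and $x_j=-d_j\cdot\mathrm{sign}(v_j)$; then $x+tv\in\Xi$ forces $t\le 0$ from the $i$-th coordinate and $t\ge 0$ from the $j$-th, so $\Xi\cap(x+\R v)=\{x\}$. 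Since $|\Xi\cap(x+\R e_i)|=d_i+1\ge 2$ for every $x\in\Xi$ once all $d_i>0$, the coordinate axes are exactly the lines $L$ through the origin with $|\Xi\cap(x+L)|\ge 2$ for all $x\in\Xi$, and both this property and the common cardinality $d_i+1$ are transported by $\phi$. This closes (ii) and the first half of (iii), and is arguably more geometric than the paper's argument.

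Two corrections are needed, both traceable to the false belief that $0\in Z_d$ for all $d$ (it holds only for $d$ even). First, in (i) the assertion that ``$\Xi$ contains the origin'' can fail (e.g.\ $\Xi=Z_1\times Z_1$); the conclusion $\Span_\R\Xi=\Span\{e_i: d_i>0\}$ still holds because $\Span\Xi$ contains the differences $2d_ie_i$ of points of $\Xi$, so (i) survives. Second, and more seriously, your final step for (iii) identifies $\Xi\cap\R e_i$ with $\{ke_i:k\in Z_{d_i}\}$; this set is empty whenever some $d_j$ with $j\ne i$ is odd, again as in $Z_1\times Z_1$. You must work with a translate: for $x\in\Xi$, the set $\Xi\cap(x+\R e_i)$ is an arithmetic progression with common difference $2e_i$ and $d_i+1\ge 2$ terms, and $\phi$ carries it bijectively onto $\Xi'\cap(\phi(x)+\R e_{\tau(i)})$, an arithmetic progression with common difference $2e_{\tau(i)}$ and the same number of terms; since an affine bijection between two equinumerous finite progressions of length at least $2$ must send the common difference to plus or minus the common difference, $\phi(e_i)=\pm e_{\tau(i)}$. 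Also, the side remark that $|\Xi\cap(x+L)|$ is always $1$ or some $d_i+1$ is false (a diagonal of $Z_2\times Z_2$ through $(2,0)$ meets $\Xi$ in exactly $2$ points), but you never use it. With these repairs the proposal is a complete and valid alternative proof.
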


\begin{proof}
Since the sum of the number of zeros in $\{d_1,...,d_n\}$ (resp. $\{d_1',...,d_n'\}$) 
and the dimension of $\mathrm{Span}\Xi$ (resp. $\mathrm{Span}\Xi'$) is equal to $n$,
the assertion (i) follows from $\mathrm{Span}\Xi\simeq \mathrm{Span}\Xi'$.

For (ii), we may assume $d_1,d_1'>0$ by (i). For a subset $S\subset\R^n$, define $m(S):=\{\frac{s_1+s_2}{2}:~ s_1,s_2\in S\}$ as the set
of mid-points. If $\phi:\R^n\to\R^n$ is an isomorphism, then $\phi(m(S))=m(\phi(S))$.
Define $M_d:=m(Z_{d})=\{x\in\Z: ~-d\leq x\leq d\}$ for $d\in\N$ and note that 
\begin{itemize}
\item $m(\Xi)=m(Z_{d_1})\times m(Z_{d_2})\times\cdots\times m(Z_{d_n})=M_{d_1}\times M_{d_2}\times\cdots\times M_{d_n}$, and
\item $\phi(M_{d_1}\times M_{d_2}\times\cdots\times M_{d_n})=M_{d_1'}\times M_{d_2'}\times\cdots\times M_{d_n'}$.
\end{itemize}

Let $0<k_1<k_2<\cdots< k_r$ and $n_1,n_2,...,n_r$ be positive integers such that 
\begin{equation}\label{product1}
m(\Xi)=M_{d_1}\times M_{d_2}\times\cdots\times M_{d_n}= M_{k_1}^{n_1}\times M_{k_2}^{n_2}\times 
\cdots\times M_{k_r}^{n_r}.
\end{equation}

Let $0<\ell_1<\ell_2<\cdots< \ell_s$ and $m_1,m_2,...,m_s$ be positive integers such that 
\begin{equation}\label{product2}
m(\Xi')=M_{d_1'}\times M_{d_2'}\times\cdots\times M_{d_n'}= M_{\ell_1}^{m_1}\times M_{\ell_2}^{m_2}\times \cdots\times M_{\ell_s}^{m_s}.
\end{equation}

\begin{lemma}\label{lem_phi zero first part}
In equations \eqref{product1} and \eqref{product2}, $k_r=\ell_s$ and $n_r=m_s$.
Moreover, 
\begin{equation}\label{equalpart}
\phi(0_{\R^{n-n_r}}\times M_{k_r}^{n_r})=0_{\R^{n-m_s}}\times M_{\ell_s}^{m_s}.
\end{equation}
\end{lemma}

\begin{proof}
Consider a standard basis vector $e_i$ in $0_{\R^{n-n_r}}\times M_{k_r}^{n_r}$.
Then
\begin{equation}\label{nonzero}
\phi(e_i),\phi(k_r e_i)\hspace{.1in} \text{are non-zero in}\hspace{.1in} M_{\ell_1}^{m_1}\times M_{\ell_2}^{m_2}\times \cdots\times M_{\ell_s}^{m_s}.
\end{equation}
It follows that 
$k_r\leq \ell_s$ and thus $k_r=\ell_s$ by symmetry.
Since $\ell_1,...,\ell_{s-1}<\ell_s=k_r$, the assertion \eqref{nonzero}
implies that $\phi(e_i)\in 0_{\R^{n-m_s}}\times M_{\ell_s}^{m_s}$
and thus $\phi(0_{\R^{n-n_r}}\times M_{k_r}^{n_r})\subset 0_{\R^{n-m_s}}\times M_{\ell_s}^{m_s}$.
We obtain \eqref{equalpart} by symmetry and $n_r=m_s$.
\end{proof}

\begin{lemma}\label{lem_phi zero second part}
In equations \eqref{product1} and \eqref{product2}, one has
\begin{equation}\label{equalpart2}
\phi(M_{k_1}^{n_1}\times \cdots\times M_{k_{r-1}}^{n_{r-1}}\times 0_{\R^{n_r}})=M_{\ell_1}^{m_1}\times 
\cdots\times M_{\ell_{s-1}}^{m_{s-1}}\times 0_{\R^{m_s}}.
\end{equation}
\end{lemma}

\begin{proof}
Suppose $v\in M_{k_1}^{n_1}\times \cdots\times M_{k_{r-1}}^{n_{r-1}}\times 0_{\R^{n_r}}$. If $\phi(v)$ in \eqref{product2} is not contained in the right-hand side of \eqref{equalpart2},
then \eqref{equalpart} implies that
there exists $v'\in 0_{\R^{n-n_r}}\times M_{k_r}^{n_r}$ such that $\phi(v+v')$ lies outside \eqref{product2},
which is absurd. We obtain \eqref{equalpart2} by symmetry.
\end{proof}

Since Lemma~\ref{lem_phi zero first part} and \ref{lem_phi zero second part} imply that the two direct products in \eqref{product1} and \eqref{product2} are identical
and $\phi$ preserves each component,
we obtain the assertion (ii) and it suffices to consider $\Xi=\Xi'=Z_d^n$ for (iii).

We would like to check that $\phi$ preserves $\{\pm e_i:~1\leq i\leq n\}\subset M_d^n$.
The case $n=1$ is trivial, so assume $n \geq 2$. 
Since $de_i\in M_d^n$, we obtain $d\phi(e_i)=\phi(de_i)\in M_d^n\subset\Z^n$.
Thus, if  $\phi(de_i)=(x_1,x_2,...,x_n)\in\Z^n$ then
\begin{itemize}
\item $|x_\ell|\in\{0,d\}$ for all $1\leq \ell\leq n$ and
\item not all $x_\ell$ are zero.
\end{itemize}
Suppose there are two coordinates $x_\ell$ with absolute value $d$. By the pigeon-hole principle,
there exists $e_j\neq e_i$ such that the absolute value 
of some coordinate of $de_i\pm de_j$ is $2d$. But this contradicts that $\phi(de_i\pm de_j)\subset M_d^n$.
\end{proof}

\begin{defi}
Let $0< d_1<d_2<\cdots <d_r$ and $n_1,n_2,...,n_r$ be positive integers, where $r\in\N$.
Let $n:=n_1+n_2+\cdots+ n_r$ and define the automorphism groups
$$\Omega_i:=\{\phi\in\GL_{n_i}(\R):~\phi~\text{preserves}~Z_{d_i}^{n_i}\subset\R^{n_i}\}$$ for $1\leq i\leq r$
and 
$$\Omega:=\{\phi\in\GL_n(\R):~\phi~\text{preserves}~ Z_{d_1}^{n_1}\times Z_{d_2}^{n_2} \times\cdots\times
Z_{d_r}^{n_r}\subset\R^n\}.$$
\end{defi}

\begin{cor}\label{auto}
Under the standard Euclidean metric on $\R^n$, 
there are natural identifications 
\begin{enumerate}[(i)]
\item $\Omega_i=W_{B_{n_i}}$ for all $i$ and
\item $\Omega=\Omega_1\times\Omega_2\times\cdots\times\Omega_r$.
\end{enumerate}
In particular, $\Omega$ is orthogonal.
\end{cor}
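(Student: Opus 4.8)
\textbf{Proof plan for Corollary \ref{auto}.}

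The plan is to prove each assertion in turn, the heart being (ii), the decomposition of the full automorphism group as a product. First I would address (i). The set $Z_{d_i}^{n_i}\subset\R^{n_i}$ is, up to the scaling $x\mapsto x/d_i$ (valid since $d_i>0$), the vertex set of the standard hypercube $\mathbb H_{n_i}=[-1,1]^{n_i}$; more precisely $Z_{d_i}^{n_i}=\{-d_i,d_i\}^{n_i}$ is exactly the set of vertices of the scaled hypercube $[-d_i,d_i]^{n_i}$. A linear automorphism of $\R^{n_i}$ preserving this finite vertex set preserves its convex hull, so it preserves $[-1,1]^{n_i}$; conversely any linear map preserving the hypercube permutes its vertices. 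Hence $\Omega_i=\{\phi\in\GL_{n_i}(\R):\phi~\text{preserves}~\mathbb H_{n_i}\}$, which by Proposition \ref{BnWeyl}(i) is precisely $W_{B_{n_i}}$ (note this also shows $\Omega_i$ is orthogonal, a fact I will reuse). One caveat: $Z_{d_i}=\{-d_i,d_i\}$ only when $d_i$ is such that the set has two elements — but for $d_i=1$ this is $\{-1,1\}$, and for larger odd/even $d_i$ the set $Z_{d_i}$ has more than two points. So more care is needed: I should instead argue directly that a linear $\phi$ preserving $Z_{d_i}^{n_i}$ sends extreme points to extreme points, and the extreme points of $Z_{d_i}^{n_i}$ are exactly $\{-d_i,d_i\}^{n_i}$; preserving this set forces $\phi$ to preserve the cube $[-d_i,d_i]^{n_i}$, hence (after scaling) lies in $W_{B_{n_i}}$. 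Conversely every element of $W_{B_{n_i}}$ (a signed permutation matrix) clearly preserves $Z_{d_i}^{n_i}$.

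For (ii), the inclusion $\Omega_1\times\cdots\times\Omega_r\subseteq\Omega$ is immediate: a block-diagonal map with $i$-th block in $\Omega_i$ preserves the product $Z_{d_1}^{n_1}\times\cdots\times Z_{d_r}^{n_r}$. The substance is the reverse inclusion: an arbitrary $\phi\in\Omega$ must be block diagonal with respect to the decomposition $\R^n=\R^{n_1}\oplus\cdots\oplus\R^{n_r}$. This is exactly where Proposition \ref{matching}(iii) does the work. Applying that proposition with $\Xi=\Xi'=Z_{d_1}^{n_1}\times\cdots\times Z_{d_r}^{n_r}$ (so the sorted tuples of $d$'s coincide, and since all $d_i>0$ the hypothesis $d_1,d_1'>0$ holds), we get that $\phi$ sends each standard basis vector $e_a$ to $\pm e_b$ where the corresponding $d$-values match. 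Since $0<d_1<d_2<\cdots<d_r$ are distinct, the index $a$ lying in the $i$-th block (i.e. the block where the weight-length is $d_i+1$) forces $b$ to lie in the same $i$-th block. Therefore $\phi$ permutes $\{\pm e_a\}$ within each block, i.e. $\phi$ is block diagonal, and its $i$-th block preserves $Z_{d_i}^{n_i}$ and hence lies in $\Omega_i$. This gives $\Omega\subseteq\Omega_1\times\cdots\times\Omega_r$.

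Finally, the "in particular" is a consequence: each $\Omega_i=W_{B_{n_i}}\subset O(\R^{n_i})$ acts orthogonally on $\R^{n_i}$ by Proposition \ref{BnWeyl}(i), and an element of $\Omega=\prod_i\Omega_i$ acts on $\R^n=\bigoplus_i\R^{n_i}$ block-diagonally by orthogonal maps on mutually orthogonal (with respect to the standard metric) subspaces, hence is itself orthogonal. The main obstacle, and the only non-formal point, is justifying that $\phi\in\Omega$ must be block diagonal; but this is not really an obstacle here since Proposition \ref{matching}(iii) was proven precisely to supply this — one just needs to check its hypotheses apply and translate "preserves $\pm e_a$ with matching $d$-value" into "block diagonal" using the strict inequalities $d_1<\cdots<d_r$. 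I would keep the write-up short, citing Proposition \ref{matching} and Proposition \ref{BnWeyl} and spelling out only the block-matching step.
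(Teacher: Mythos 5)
Your proof is correct and follows essentially the same route as the paper: the substantive step, showing that any $\phi\in\Omega$ is block diagonal, is exactly the paper's application of Proposition \ref{matching}(iii) together with the strict inequalities $d_1<\cdots<d_r$, and part (i) closes via Proposition \ref{BnWeyl}(i) in both arguments. The only (harmless) variation is that for (i) you justify $\Omega_i\subseteq W_{B_{n_i}}$ by an extreme-points-of-the-convex-hull argument, whereas the paper simply invokes Proposition \ref{matching}(iii) once more to see that every element of $\Omega_i$ is a signed permutation; your self-correction about $Z_{d_i}$ having more than two points is the right fix and the final version is sound.
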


\begin{proof}
Since $\Omega_i$ has $2^{n_i} n_i!$ elements and preserves $\mathbb{H}_{n_i}$ the hypercube by 
Proposition \ref{matching}(iii), the assertion (i) holds by Proposition \ref{BnWeyl}(i).
The assertion (ii) follows from Proposition \ref{matching}(iii).
\end{proof}

\subsection{Rectangular and hypercubic representations}\label{s2.3}

Let $\Lieg$ be a complex semisimple Lie algebra of rank $n$. Fix a Cartan subalgebra $\Liet$ of $\Lieg$ and 
denote by $\Lambda_\Lieg$ the weight lattice of $\Lieg$ with respect to $\Liet$, regarded as a rank $n$ lattice in $\Lambda_\Lieg\otimes\R\simeq\R^n$.

\begin{defi}\label{rectrepn}
A Lie algebra homomorphism $\psi:\Lieg\to\End(V)$ of the semisimple Lie algebra $\Lieg$ is determined
by its \emph{formal character} $\psi|_\Liet$, which corresponds to a multiset $\Xi$ of weights in $\Lambda_\Lieg$.
We identify the formal character with the multiset $\Xi$.
\begin{enumerate}[(1)]
\item We say that $\psi$ is \emph{rectangular} (resp. \emph{hypercubic}) if 
 each weight of $\Xi$ is of multiplicity one and
the subset $\Xi\subset\Lambda_\Lieg\otimes\R$ is rectangular (resp. hypercubic). 
\item In this case, 
we say that the weights in $\Xi$ are rectangular (resp. hypercubic) and we 
define the \emph{set of lengths} $\mathscr L_\psi$ of $\psi$ to be $\mathscr L_\Xi$ 
(resp. the \emph{length} of $\psi$ to be the length of $\Xi$).
\item A rectangular representation $(\Lieg,\psi)$ is said to be \emph{decomposable}
if there exist rectangular representations $(\Lieg_1,\psi_1)$ and $(\Lieg_2,\psi_2)$
such that $\Lieg=\Lieg_1\times\Lieg_2$ and $\psi\simeq\psi_1\otimes\psi_2$ as an external tensor product.
If such a decomposition does not exist, we say that $(\Lieg,\psi)$ is \emph{indecomposable}.
\end{enumerate}
\end{defi}

\begin{remark}
If $\psi$ above is faithful and rectangular, then the lengths of $\Xi$ are at least $2$ because 
$\Xi$ spans $\Lambda_\Lieg\otimes\R$.
\end{remark}

\begin{example}\label{example}
Here are some indecomposable hypercubic representations.
\begin{enumerate}[(1)]
\item The one-dimensional trivial representation is hypercubic of length $1$.
\item Any irreducible representation of $A_1$.
\item The direct sum $V_1\oplus V_2$ of two irreducible representations of $A_1$ whose dimensions differ by $1$.
\item (\cite[$\mathsection20.1$]{FH91}) The spin representation $\mathrm{Spin}$ of $B_n$ for $n\in\N$ (resp. $D_n$ for $n\in\N_{\geq 2}$)
is of dimension $2^n$. It 
is hypercubic of length $2$ with formal character $\Xi$ equal to
$$\{(\pm 1/2,\pm 1/2,...,\pm 1/2)\in\R^n\}$$
if the root system is identified with \eqref{rootsys} in $\mathsection\ref{s2.1}$. Note that $(B_n,\mathrm{Spin})$ is irreducible
and $(D_n,\mathrm{Spin})=(D_n,\mathrm{Spin}^+\oplus \mathrm{Spin}^-)$ is the sum 
of two irreducible half-spin representations.
\item (\cite[$\mathsection20.3$]{FH91}) The outer automorphism group $\mathrm{Out}(D_4)$ of $D_4$ is isomorphic to $S_3$. Moreover, $\mathrm{Out}(D_4)$ acts faithfully on 
$\{\mathrm{Std}, \mathrm{Spin}^+, \mathrm{Spin}^-\}$ by composition. Since $(D_4,\mathrm{Spin})$ is hypercubic of length $2$, we obtain that $(D_4, \mathrm{Std}\oplus \mathrm{Spin}^+)$ and $(D_4, \mathrm{Std}\oplus \mathrm{Spin}^-)$ are also hypercubic of length $2$.
\item The direct sum $\mathrm{Std}\oplus\mathrm{Spin}$ representation of $B_2$ is hypercubic of length $3$
with formal character $\Xi$ equal to 
$$\{(\pm 1,0),(0,\pm 1), (0,0)\}\cup \{(\pm 1/2,\pm 1/2)\}\subset\R^2$$
if the root system is identified with \eqref{rootsys} in $\mathsection\ref{s2.1}$. Note that the restriction of such a representation
to the subalgebra $A_1\times A_1\subset B_2$ 
is the external tensor product $(A_1, \mathrm{Std}\oplus \mathbb 1)\otimes (A_1, \mathrm{Std}\oplus \mathbb 1)$.
\end{enumerate}
\end{example}

\begin{remark}
Theorem \ref{main1} asserts that any faithful rectangular representation of a semisimple Lie algebra
is the external tensor product of these indecomposable hypercubic representations in a unique way.
\end{remark}

\subsection{Reduction to the hypercubic case}\label{s2.4}
Let $I$ be a finite index set. For each $i \in I$, let $\Lieg_i$ be a complex semisimple Lie algebra with  corresponding weight lattice $\Lambda_{\Lieg_i}$ (with respect to a Cartan subalgebra $\Liet_i$) and Weyl group $W_{\Lieg_i}$. 
The weight lattice $\Lambda_\Lieg$ of the product 
$\Lieg:=\prod_{i\in I}\Lieg_i$ (with respect to Cartan subalgebra $\prod_{i\in I}\Liet_i$) is identified as
$\bigoplus_{i\in I}\Lambda_{\Lieg_i}$. The following lemmas are well-known.

\begin{lemma}\label{wk1}
The action of the Weyl group $W_{\Lieg}$ of $\Lieg$ on $\Lambda_{\Lieg}\otimes\R$ 
is identified as the action of $\prod_{i\in I}W_{\Lieg_i}$ on 
$\bigoplus_{i\in I}(\Lambda_{\Lieg_i}\otimes\R)$.
If $\Lieg_i$ is simple, then the action of $W_{\Lieg_i}$ on $\Lambda_{\Lieg_i}\otimes\R$
is irreducible and non-trivial.
\end{lemma}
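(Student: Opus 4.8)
The plan is to prove the two assertions separately, both by reducing to standard structure theory. For the first assertion, I would start from the fact that the root system $\Phi_\Lieg$ of a product $\Lieg = \prod_{i\in I}\Lieg_i$ is the disjoint union $\bigsqcup_{i\in I}\Phi_{\Lieg_i}$, viewed inside $\bigoplus_{i\in I}(\Lambda_{\Lieg_i}\otimes\R)$, where each $\Phi_{\Lieg_i}$ lies in its own summand $\Lambda_{\Lieg_i}\otimes\R$ and is orthogonal (with respect to the Killing form) to all the other summands. Since the Weyl group $W_\Lieg$ is generated by the reflections $s_\alpha$ for $\alpha\in\Phi_\Lieg$, and a reflection $s_\alpha$ with $\alpha\in\Phi_{\Lieg_i}$ acts as the corresponding reflection on the $i$-th summand and as the identity on all $j\neq i$ summands (because $\alpha\perp\Lambda_{\Lieg_j}\otimes\R$ for $j\neq i$), the generators of $W_\Lieg$ factor through $\prod_{i\in I}W_{\Lieg_i}$. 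Conversely each $W_{\Lieg_i}$ is generated by exactly these reflections, so the map $\prod_{i\in I}W_{\Lieg_i}\to W_\Lieg$ is surjective; injectivity follows because the actions on the distinct summands are independent. This identifies the $W_\Lieg$-action on $\Lambda_\Lieg\otimes\R = \bigoplus_i(\Lambda_{\Lieg_i}\otimes\R)$ with the product action, as claimed.

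For the second assertion, suppose $\Lieg_i$ is simple. Non-triviality of the $W_{\Lieg_i}$-action is immediate: $W_{\Lieg_i}$ contains the reflection $s_\alpha$ for any root $\alpha$, which is not the identity on $\Lambda_{\Lieg_i}\otimes\R$ since the root system spans this space. For irreducibility, I would argue as follows: if $U\subsetneq\Lambda_{\Lieg_i}\otimes\R$ were a nonzero proper $W_{\Lieg_i}$-stable subspace, then (using that the form is $W_{\Lieg_i}$-invariant) its orthogonal complement $U^\perp$ is also stable, giving an orthogonal decomposition. A root $\alpha\in\Phi_{\Lieg_i}$ either lies in $U$ or in $U^\perp$ (since $s_\alpha$ stabilizes both and a reflection cannot fix a vector having nonzero components in both $U$ and $U^\perp$ unless that vector lies in the fixed hyperplane — more precisely, if $\alpha = u + u'$ with $u\in U$, $u'\in U^\perp$ nonzero, then $s_\alpha$ would not preserve $U$). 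This partitions $\Phi_{\Lieg_i}$ into two mutually orthogonal pieces, contradicting the irreducibility (= indecomposability) of the root system of the simple Lie algebra $\Lieg_i$. Hence no such $U$ exists.

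The main obstacle, such as it is, is purely expository: the result is genuinely standard (it is Bourbaki-level material on root systems and Weyl groups), so the real work is citing the right facts cleanly — the decomposition $\Phi_\Lieg=\bigsqcup\Phi_{\Lieg_i}$ with the summands pairwise orthogonal, and the equivalence between simplicity of $\Lieg_i$ and irreducibility of its root system — rather than discovering anything. I would therefore keep the proof short, invoke \cite[Ch.~VI]{Bou} or the analogue of \cite[\S 10--12]{Hum78} already in the bibliography for the structural facts, and spell out only the reflection-stays-in-a-summand observation, which is the one small point that makes the product decomposition of the Weyl action transparent.
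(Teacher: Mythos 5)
Your argument is correct. Note that the paper offers no proof of Lemma \ref{wk1} at all --- it simply declares the lemma ``well-known'' --- so there is nothing to compare against; your write-up is exactly the standard argument the authors are implicitly invoking (the disjoint-union decomposition of the root system of a product, reflections from one factor acting trivially on the other summands, and the classical fact that a proper invariant subspace together with its orthogonal complement would split an irreducible root system into two mutually orthogonal nonempty pieces). The one step worth spelling out, the observation that $s_\alpha$ fails to preserve $U$ when $\alpha$ has nonzero components in both $U$ and $U^\perp$, you have identified and verified correctly.
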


\begin{lemma}\label{wk2}
Suppose $\Lambda_{\Lieg}\otimes\R$ 
is equipped with a positive definite metric $||\cdot||$ such that $W_{\Lieg}$ is orthogonal.
The following assertions hold.
\begin{enumerate}[(i)]
\item The direct sum decomposition 
$\Lambda_{\Lieg}\otimes\R=\bigoplus_{i\in I}(\Lambda_{\Lieg_i}\otimes\R)$
is orthogonal.
\item If $\Lieg_i$ is a simple, the restriction of $||\cdot||$ to the subspace $\Lambda_{\Lieg_i}\otimes\R$
is, up to a positive scalar, induced from the Killing form of $\Lieg_i$.
\end{enumerate}
\end{lemma}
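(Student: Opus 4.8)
The plan is to deduce both assertions from two facts recorded in Lemma~\ref{wk1}: that $W_\Lieg=\prod_{i\in I}W_{\Lieg_i}$ acts on $\Lambda_\Lieg\otimes\R=\bigoplus_{i\in I}(\Lambda_{\Lieg_i}\otimes\R)$ with the $i$-th factor acting through $W_{\Lieg_i}$ on the $i$-th summand and trivially on the others, and that each $\Lambda_{\Lieg_i}\otimes\R$ — being, by Lemma~\ref{wk1} applied to the simple factors of $\Lieg_i$, a direct sum of nontrivial irreducible $W_{\Lieg_i}$-modules — contains no nonzero $W_{\Lieg_i}$-fixed vector. Throughout I write $\langle\cdot,\cdot\rangle$ for the inner product with $||v||^2=\langle v,v\rangle$; it is $W_\Lieg$-invariant by hypothesis, and its restriction to any $\Lambda_{\Lieg_i}\otimes\R$ is a positive definite, $W_{\Lieg_i}$-invariant inner product (invariance because $W_{\Lieg_i}\subset W_\Lieg$ preserves both the subspace $\Lambda_{\Lieg_i}\otimes\R$ and $\langle\cdot,\cdot\rangle$).

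For (i), fix $i\neq j$ and take $v\in\Lambda_{\Lieg_i}\otimes\R$, $w\in\Lambda_{\Lieg_j}\otimes\R$. For any $\sigma\in W_{\Lieg_j}$ we have $\sigma v=v$, so by $W_\Lieg$-invariance $\langle v,\sigma w\rangle=\langle\sigma^{-1}v,w\rangle=\langle v,w\rangle$. Averaging over $\sigma\in W_{\Lieg_j}$ and using bilinearity, $\langle v,w\rangle=\bigl\langle v,\ \tfrac{1}{|W_{\Lieg_j}|}\sum_{\sigma\in W_{\Lieg_j}}\sigma w\bigr\rangle$; but $\tfrac{1}{|W_{\Lieg_j}|}\sum_\sigma\sigma w$ is a $W_{\Lieg_j}$-fixed vector of $\Lambda_{\Lieg_j}\otimes\R$, hence is $0$. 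Therefore $\langle v,w\rangle=0$, which proves $\Lambda_{\Lieg_i}\otimes\R\perp\Lambda_{\Lieg_j}\otimes\R$ and hence the orthogonality of the whole decomposition.

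For (ii), now $\Lieg_i$ is simple. The Killing form of $\Lieg_i$ restricts to a nondegenerate form on $\Liet_i$ which transports (via the real span of the roots) to a positive definite, $W_{\Lieg_i}$-invariant inner product $\langle\cdot,\cdot\rangle_\kappa$ on $\Lambda_{\Lieg_i}\otimes\R$, the $W_{\Lieg_i}$-invariance holding because $W_{\Lieg_i}$ acts by restrictions of inner automorphisms, which preserve the Killing form. The restriction of $\langle\cdot,\cdot\rangle$ is a second $W_{\Lieg_i}$-invariant symmetric bilinear form on $\Lambda_{\Lieg_i}\otimes\R$. Since $\Lieg_i$ is simple its root system is irreducible, so the reflection representation $\Lambda_{\Lieg_i}\otimes\R$ is absolutely irreducible (the relevant irreducible $W_{\Lieg_i}$-module is realizable over $\Q$, hence of real type; equivalently, the Dynkin diagram is connected); by Schur's lemma the space of $W_{\Lieg_i}$-invariant bilinear forms on it is therefore one-dimensional. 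Hence the restriction of $\langle\cdot,\cdot\rangle$ equals $c\,\langle\cdot,\cdot\rangle_\kappa$ for some scalar $c$, and comparing values $\langle v,v\rangle$ for $v\neq 0$ forces $c>0$.

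I do not expect a genuine obstacle here — this is precisely why the statement is flagged as well-known. The only input that is not purely formal is the absolute irreducibility of the reflection representation of an irreducible Weyl group (equivalently, the uniqueness up to positive scalar of a $W$-invariant metric on the ambient space of an irreducible root system), which is classical; the write-up should simply invoke it, with everything else being Schur's lemma and the absence of trivial subrepresentations supplied by Lemma~\ref{wk1}.
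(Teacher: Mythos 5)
Your proof is correct. The paper itself offers no argument for Lemma~\ref{wk2} (it is stated as ``well-known'' immediately after Lemma~\ref{wk1}), and your write-up supplies exactly the standard one: for (i), averaging over $W_{\Lieg_j}$ and using the absence of nonzero fixed vectors guaranteed by the nontriviality/irreducibility statement in Lemma~\ref{wk1}; for (ii), Schur's lemma applied to the (absolutely) irreducible reflection representation of an irreducible Weyl group, comparing the restricted metric with the Killing-form metric. (As a minor remark, real irreducibility already suffices for (ii): two $W$-invariant positive definite forms differ by a $W$-equivariant, self-adjoint positive operator, whose eigenspaces are subrepresentations, so it must be scalar — but invoking absolute irreducibility as you do is equally fine and classical.)
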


The following general result is crucial to decomposing representations.

\begin{prop}\label{decompose}
Let $\Lieg_1,\Lieg_2$ be semisimple Lie algebras and 
$\psi: \Lieg_1\times\Lieg_2\to \End(V)$ be a representation such that the formal character 
$\Xi\subset \Lambda_{\Lieg_1\times\Lieg_2}=\Lambda_{\Lieg_1}\oplus\Lambda_{\Lieg_2}$ 
is a direct product of multisets:
\begin{equation}\label{dp}
\Xi=\Xi_1\times\Xi_2:=\{(w_1,w_2)\in\Lambda_{\Lieg_1}\oplus\Lambda_{\Lieg_2}:~ w_1\in\Xi_1,~w_2\in\Xi_2\},
\end{equation}
where $\Xi_1$ (resp. $\Xi_2$) is a finite multiset in $\Lambda_{\Lieg_1}$ (resp. $\Lambda_{\Lieg_2}$).
Then there exist representations $\psi_i:\Lieg_i\to\End(V_i)$ with formal character $\Xi_i$ for $i=1,2$
such that $\psi$ is isomorphic to 
\begin{equation*}\label{tensor}
\psi_1\otimes\psi_2:\Lieg_1\times\Lieg_2\to\End(V_1\otimes V_2).
\end{equation*}
\end{prop}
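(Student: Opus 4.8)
\textbf{Proof proposal for Proposition \ref{decompose}.}

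The plan is to produce the two factor representations by decomposing $V$ as a tensor product of weight-graded subspaces and showing that each $\Lieg_i$ preserves its own factor. First I would work over each pair of weights $(w_1,w_2)\in\Xi_1\times\Xi_2$: by hypothesis the weight space $V_{(w_1,w_2)}$ is one-dimensional (or more precisely $\Xi$ is a multiset, so allowing multiplicities, $\dim V_{(w_1,w_2)} = m_1(w_1)\cdot m_2(w_2)$ where $m_j$ is the multiplicity function of $\Xi_j$ — the product structure of $\Xi$ forces this product factorization of dimensions). The candidate is to set $V_1 := \bigoplus_{w_1\in\Xi_1} V_1^{w_1}$ with $\dim V_1^{w_1} = m_1(w_1)$ and similarly $V_2$, and to build an isomorphism $V \cong V_1\otimes V_2$ matching the gradings. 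The key point is that the $\Liet_1$-action alone already splits $V$ according to $\Xi_1$ (forgetting the $\Liet_2$-coordinate), giving a grading $V = \bigoplus_{w_1} V[w_1]$ where $V[w_1]$ is the sum of all $V_{(w_1,w_2)}$ over $w_2\in\Xi_2$; symmetrically for $\Liet_2$, and these two gradings are "transverse" precisely because $\Xi$ is a direct product.

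Next I would show $\Lieg_1$ acts on $V$ in a way compatible with the second grading — i.e. $\Lieg_1$ preserves each $V[w_2] := \bigoplus_{w_1} V_{(w_1,w_2)}$. This is where the direct-product hypothesis on $\Xi$ does the real work. For a root vector $X_\alpha\in\Lieg_1$ corresponding to a root $\alpha$ of $\Lieg_1$ (viewed inside $\Lambda_{\Lieg_1}\oplus 0 \subset \Lambda_{\Lieg_1\times\Lieg_2}$), the operator $X_\alpha$ shifts the $(\Liet_1\times\Liet_2)$-weight by $(\alpha,0)$, hence shifts the $w_1$-coordinate by $\alpha$ and fixes the $w_2$-coordinate; so $X_\alpha(V_{(w_1,w_2)}) \subseteq V_{(w_1+\alpha,w_2)} \subseteq V[w_2]$. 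Since $\Lieg_1$ is generated by such root vectors together with $\Liet_1$ (which acts diagonally on the grading), $\Lieg_1$ preserves each $V[w_2]$. Symmetrically $\Lieg_2$ preserves each $V[w_1]$. Now fix once and for all a weight $w_2^0\in\Xi_2$ and set $V_1 := V[w_2^0]$ with its $\Lieg_1$-action $\psi_1$; its formal character as a $\Lieg_1$-representation is exactly $\Xi_1$ by construction. The subtlety — and the step I expect to be the main obstacle — is to identify all the "slices" $V[w_2]$ for varying $w_2$ with one another as $\Lieg_1$-modules, so as to glue them into a genuine tensor product $V_1\otimes V_2$ rather than merely a $\Lieg_1$-module with the right character.

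To overcome this I would proceed as follows. Restrict $V$ to $\Lieg_1$ (with $\Lieg_2$ acting trivially): it is a completely reducible $\Lieg_1$-module (semisimplicity) whose character is $|\Xi_2|$ copies of $\Xi_1$, and the decomposition $V = \bigoplus_{w_2\in\Xi_2} V[w_2]$ is a direct sum of $\Lieg_1$-submodules each with character $\Xi_1$. Dually, restricting to $\Lieg_2$ gives $V = \bigoplus_{w_1\in\Xi_1} V[w_1]$, a sum of $\Lieg_2$-submodules each with character $\Xi_2$. The commuting actions of $\Lieg_1$ and $\Lieg_2$ make $V$ a module over $\Lieg_1\times\Lieg_2$, equivalently over the (reductive, since both factors are semisimple) Lie algebra $\Lieg_1\times\Lieg_2$; by complete reducibility $V = \bigoplus_\mu M_\mu\otimes N_\mu$ with $M_\mu$ irreducible over $\Lieg_1$ and $N_\mu$ irreducible over $\Lieg_2$. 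It then remains to argue that only one $\mu$ occurs, i.e. $V = M\otimes N$ for a single irreducible pair, and that $M$ has character $\Xi_1$ and $N$ has character $\Xi_2$: indeed the $(\Liet_1\times\Liet_2)$-character of $M_\mu\otimes N_\mu$ is $\mathrm{ch}(M_\mu)\times\mathrm{ch}(N_\mu)$ (a direct product of multisets), and the hypothesis says the total character $\Xi = \Xi_1\times\Xi_2$ is itself a single direct product; one checks that a direct product of multisets cannot be written as a disjoint union of two or more nonzero direct products unless it is "primitive" in each factor — here one uses that $\Xi_1$ and $\Xi_2$ are the respective projections of $\Xi$, so each $\mathrm{ch}(M_\mu)\subseteq\Xi_1$ and $\mathrm{ch}(N_\mu)\subseteq\Xi_2$ with multiplicities, and a counting/support argument forces a single summand. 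Setting $\psi_1$ to be the $\Lieg_1$-action on $M=V_1$ and $\psi_2$ the $\Lieg_2$-action on $N=V_2$ gives $\psi\simeq\psi_1\otimes\psi_2$ with $\mathrm{ch}(\psi_i)=\Xi_i$, as desired. The combinatorial lemma on uniqueness of direct-product factorizations of multisets (analogous in spirit to Proposition \ref{matching}) is the one genuinely new ingredient; everything else is standard representation theory of semisimple Lie algebras.
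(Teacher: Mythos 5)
The first two-thirds of your argument is sound: the multiplicity factorization $\dim V_{(w_1,w_2)}=m_1(w_1)m_2(w_2)$ follows from the multiset identity $\Xi=\Xi_1\times\Xi_2$, and the observation that $\Lieg_1$ preserves each slice $V[w_2]$ (because $[\Liet_2,\Lieg_1]=0$ and root vectors of $\Lieg_1$ shift the weight by $(\alpha,0)$) is correct. You also correctly identify where the real difficulty lies — gluing the slices into a genuine tensor product. But your resolution of that difficulty contains a genuine error: you reduce to showing that in the isotypic decomposition $V=\bigoplus_\mu M_\mu\otimes N_\mu$ \emph{only one summand occurs}. That is false under the hypotheses and would contradict the intended use of the proposition. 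For example, $(A_1\times A_1,\ (\mathrm{Std}\oplus\mathbb 1)\otimes(\mathrm{Std}\oplus\mathbb 1))$ has formal character $\{-1,0,1\}\times\{-1,0,1\}$, a direct product, yet decomposes into four irreducible external tensor summands; the conclusion of the proposition only asserts $\psi\simeq\psi_1\otimes\psi_2$ with $\psi_1,\psi_2$ possibly \emph{reducible}, and indeed the paper applies it to reducible factors such as $\mathrm{Std}\oplus\mathrm{Spin}$ of $B_2$. So no ``counting/support argument'' can force a single summand, and the ``combinatorial lemma on uniqueness of direct-product factorizations'' you defer to is not a side ingredient — correctly stated, it is the entire content of the proposition, and it is left unproven. (As stated it is also wrong: a direct product of multisets certainly can be written as a disjoint union of several nonzero direct products, e.g.\ $\{a,b\}\times\Xi_2=(\{a\}\times\Xi_2)\sqcup(\{b\}\times\Xi_2)$.)

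What is actually needed is to show that the multiset of irreducible constituents $\{(M_\mu,N_\mu)\}$ is itself a direct product of a multiset of $\Lieg_1$-irreducibles and a multiset of $\Lieg_2$-irreducibles. The paper does this by induction on $|\Xi|$: pick a highest weight $(w_1,w_2)\in\Xi$, giving an irreducible constituent $U_1\otimes U_2$; then, by repeatedly choosing highest weights of the form $(w_1,w_2')$ with $w_2'\in\Xi_2\setminus\Xi_{U_2}$, produce a $\Lieg_2$-module $V_2=U_2\oplus U_2'\oplus\cdots$ with formal character exactly $\Xi_2$ such that $U_1\otimes V_2$ is a subrepresentation of $V$; the complement then has formal character $(\Xi_1\setminus\Xi_{U_1})\times\Xi_2$, and induction finishes. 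To repair your write-up you would need to supply an argument of this kind (or an honest proof of the multiplicity-space statement that $\Hom_{\Lieg_1}(M,V)$ is a fixed $\Lieg_2$-module, up to multiplicity, for every $M$ occurring in $V$); as it stands the final step is both misstated and missing.
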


\begin{proof}
By \eqref{dp}, the multiplicity of each weight in the formal character of the restriction $\psi|_{\Lieg_1}$ is divisible by $|\Xi_2|\in\N$.
By the highest weight theory of semisimple Lie algebra, there is a representation $\psi_1$ of $\Lieg_1$ such that  
$\psi|_{\Lieg_1}=\psi_1^{\oplus |\Xi_2|}$. It follows from \eqref{dp} 
that the formal character of $\psi_1$ is $\Xi_1$.
Similarly, we find a representation $\psi_2$
of $\Lieg_2$ with formal character $\Xi_2$.
These imply that the external tensor product $\psi_1\otimes\psi_2$
is isomorphic to $\psi$.
\end{proof}

The following result enables us to reduce the problem to the hypercubic
case.

\begin{prop}\label{reduction1}
Let $\Lieg$ be a complex semisimple Lie algebra
and $(\Lieg,\psi)$ be a faithful rectangular representation with formal character 
$\Xi\subset\Lambda_\Lieg\otimes\R$ isomorphic to 
\begin{equation}\label{normal}
Z_{d_1}^{n_1}\times Z_{d_2}^{n_2} \times\cdots\times Z_{d_r}^{n_r}\subset \R^n,
\end{equation}
where $0<d_1<d_2<\cdots< d_r$ and $n_1,n_2,...,n_r$ are positive integers, and 
$n=n_1+n_2+\cdots+n_r$.
Then $\Lieg=\prod_{1\leq i\leq r}\Lieg_i$ is a product of semisimple Lie algebras $\Lieg_i$
and $\psi=\bigotimes_{1\leq i\leq r}\psi_i$ is the external tensor product of the faithful hypercubic 
representations $(\Lieg_i,\psi_i)$ with formal characters  $\Xi_i\subset \Lambda_{\Lieg_i}\otimes\R$ isomorphic
to $Z_{d_i}^{n_i}\subset\R^{n_i}$.
\end{prop}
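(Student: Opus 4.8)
\textbf{Proof plan for Proposition \ref{reduction1}.}

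The plan is to combine the group-theoretic structure of the automorphism group $\Omega$ of the product set \eqref{normal}, established in Corollary \ref{auto}, with the representation-theoretic splitting in Proposition \ref{decompose}. First I would use assertion (C)-type reasoning at the Lie-algebra level: since $(\Lieg,\psi)$ is faithful and rectangular with $\Xi$ spanning $\Lambda_\Lieg\otimes\R$, fix an $\R$-isomorphism $\iota:\Lambda_\Lieg\otimes\R\to\R^n$ carrying $\Xi$ to $Z_{d_1}^{n_1}\times\cdots\times Z_{d_r}^{n_r}$. Transport the standard Euclidean metric on $\R^n$ back to $\Lambda_\Lieg\otimes\R$ via $\iota$; I then want to see that the Weyl group $W_\Lieg$ of $\Lieg$ is orthogonal for this metric. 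The point is that each Weyl reflection permutes the weights of $\psi$ (the formal character is $W_\Lieg$-invariant), so $W_\Lieg$ acts on $\Xi$ by permutations, i.e. $\iota\circ W_\Lieg\circ\iota^{-1}\subset\Omega$. By Corollary \ref{auto}(i)--(ii), $\Omega=W_{B_{n_1}}\times\cdots\times W_{B_{n_r}}$ is orthogonal with respect to the standard metric, so $W_\Lieg$ is orthogonal for the transported metric; this lets me invoke Lemmas \ref{wk1} and \ref{wk2}.

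Next I would extract the product decomposition of $\Lieg$. Write $\Lieg=\prod_{j}\Lieh_j$ as a product of simple factors. By Lemma \ref{wk2}(i) the subspaces $\Lambda_{\Lieh_j}\otimes\R$ are mutually orthogonal in $\Lambda_\Lieg\otimes\R$, and by Lemma \ref{wk1} each $W_{\Lieh_j}$ acts irreducibly on $\Lambda_{\Lieh_j}\otimes\R$. Under $\iota$, the group $W_\Lieg\subset\Omega=\prod_{i=1}^r W_{B_{n_i}}$ decomposes, and since each $W_{B_{n_i}}$ acts irreducibly on its block $\R^{n_i}$ (Lemma \ref{wk1} applied to $B_{n_i}$, or directly), each irreducible constituent $\Lambda_{\Lieh_j}\otimes\R$ must map into a single block $\R^{n_i}$; conversely the blocks $\R^{n_i}$ are each $W_\Lieg$-stable. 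Grouping the simple factors $\Lieh_j$ according to which block $\R^{n_i}$ they land in, I set $\Lieg_i:=\prod_{\Lieh_j\subset\R^{n_i}}\Lieh_j$, so that $\Lieg=\prod_{i=1}^r\Lieg_i$ and $\iota$ identifies $\Lambda_{\Lieg_i}\otimes\R$ with the $i$-th block $\R^{n_i}$. Since the weight set lives in the integral lattice, the weight lattice $\Lambda_{\Lieg_i}$ itself sits inside $\R^{n_i}$, and the formal character $\Xi$ becomes the direct product $\Xi_1\times\cdots\times\Xi_r$ with $\Xi_i$ isomorphic to $Z_{d_i}^{n_i}\subset\R^{n_i}$.

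Finally, I would iterate Proposition \ref{decompose}: grouping $\Lieg=\Lieg_1\times(\Lieg_2\times\cdots\times\Lieg_r)$ with $\Xi=\Xi_1\times(\Xi_2\times\cdots\times\Xi_r)$ yields $\psi\simeq\psi_1\otimes\psi'$ where $\psi'$ has formal character $\Xi_2\times\cdots\times\Xi_r$, and repeating gives $\psi\simeq\bigotimes_{i=1}^r\psi_i$ with $\psi_i$ having formal character $\Xi_i$. Each $\psi_i$ is rectangular with hypercubic set of lengths $\{d_i+1,\ldots,d_i+1\}$ by construction, hence hypercubic; each $\psi_i$ is faithful because $\psi=\bigotimes\psi_i$ is faithful on $\Lieg=\prod\Lieg_i$ and the kernel of an external tensor product contains the product of the kernels of the factors (equivalently, if some $\psi_{i_0}$ had a nonzero kernel in $\Lieg_{i_0}$, that kernel would act trivially on $V_1\otimes\cdots\otimes V_r$, contradicting faithfulness). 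The main obstacle is the middle step: carefully matching the $W_\Lieg$-irreducible subspaces $\Lambda_{\Lieh_j}\otimes\R$ to the coordinate blocks $\R^{n_i}$ and checking that this grouping is consistent with the integral lattice structure — in particular, that $\iota$ can be chosen (or adjusted within $\Omega$) so that it genuinely restricts to isomorphisms $\Lambda_{\Lieg_i}\otimes\R\xrightarrow{\sim}\R^{n_i}$ compatible with the factorization of $\Xi$. Once the block structure is pinned down, both the decomposition of $\Lieg$ and the tensor factorization of $\psi$ follow formally from the cited lemmas.
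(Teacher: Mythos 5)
Your proposal is correct and follows essentially the same route as the paper's proof: make $W_\Lieg$ orthogonal via Corollary \ref{auto}, show each coordinate block $\R^{n_i}$ is $W_\Lieg$-stable and absorbs whole simple factors' weight spaces, and then apply Proposition \ref{decompose}. One small repair: the reason each $\Lambda_{\Lieh_j}\otimes\R$ lands in a single block is not the irreducibility of $W_{B_{n_i}}$ on $\R^{n_i}$ (indeed $W_\Lieg$ need not be all of $\Omega$), but rather the irreducibility of $W_{\Lieh_j}$ on $\Lambda_{\Lieh_j}\otimes\R$ (Lemma \ref{wk1}) together with the $W_{\Lieh_j}$-stability of each block: this constituent occurs with multiplicity one in $\R^n$, so it must sit inside exactly one summand $\R^{n_i}$.
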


\begin{proof}
Write $\Lieg=\prod_{j\in J}\Lieq_j$ as a product of simple Lie algebras.
Without loss of generality, suppose $\Xi\subset\Lambda_\Lieg\otimes\R$ is equal to \eqref{normal}
and equip $\R^n$ as the standard Euclidean metric. Since $W_{\Lieg}$ preserves the formal 
character $\Xi$, the Weyl group $W_{\Lieg}\subset \Omega$ is orthogonal by Corollary \ref{auto}.
For each $1\leq i\leq r$, define the $n_i$-dimensional subspace
$$Y_i:=\text{Span}_\R\{e_j:~ n_1+\cdots+n_{i-1}< j\leq n_1+\cdots+n_{i-1}+n_i\}.$$
It follows that $Y_i$ is a $W_\Lieg$-subrepresentation of $\mathbb R^n$ (resp. $W_{\Lieq_j}$ for $j\in J$) 
by Corollary \ref{auto}(ii). As $\Lieq_j$ is simple, $\Lambda_{\Lieq_j}\otimes\R$
is a subspace of a unique $Y_i$ by Lemma \ref{wk1}. Therefore, $J$ is
partitioned into $J_1\cup J_2\cup \cdots \cup J_r$ such that 
$Y_i=\bigoplus_{j\in J_i} (\Lambda_{\Lieq_j}\otimes\R)$ (by dimension considerations).
Let $\Lieg_i=\prod_{j\in J_i}\Lieq_j$ for $1\leq i\leq r$. Since $\R^n=Y_1\oplus Y_2\oplus\cdots\oplus Y_r$
and $Y_i$ is a standard subspace for all $i$, 
we obtain $\Xi_i:=Z_{d_i}^{n_i}\subset\Lambda_{\Lieg_i}\subset \Lambda_{\Lieg_i}\otimes\R=Y_i$ for all $i$.
Therefore, we are done by applying Proposition \ref{decompose}.
\end{proof}

\subsection{Extracting $A_1$-factors and excluding $A_2$, $A_r$ ($r\geq 4$), and exceptional factors}\label{s2.5}
Let $\psi:\Lieg\to\End(V)$ be a faithful hypercubic representation 
of a semisimple Lie algebra $\Lieg$ and suppose the formal character $\Xi\subset\Lambda_{\Lieg}\otimes\R$ 
is equal to $Z_d^n\subset\R^n$ (Euclidean space).

\begin{prop}\label{extract}
If $\Lieg=\Lieg_1\times\Lieg_2$ where $\Lieg_1$ is a product of $A_1$-factors and $\Lieg_2$ has no $A_1$-factors,
then the following assertions hold.
\begin{enumerate}[(i)]
\item $\Lambda_{\Lieg_1}\otimes\R$ and $\Lambda_{\Lieg_2}\otimes\R$ are standard in $\R^n$.
\item There exist faithful hypercubic representations $\psi_1:\Lieg_1\to\End(V_1)$ and $\psi_2:\Lieg_2\to \End(V_2)$
such that $\psi=\psi_1\otimes\psi_2$.
\item Moreover, $\Lieg_1=\prod_{1\leq j\leq s}\Lieg_{1,j}$ and 
$\psi_1=\bigotimes_{1\leq j\leq s}\psi_{1,j}$ is the external tensor product of the faithful hypercubic 
representations of $\Lieg_{1,j}$ for $1\leq j\leq s$, 
where each $\Lieg_{1,j}$ is either $A_1$ or $A_1\times A_1$.
\end{enumerate}
\end{prop}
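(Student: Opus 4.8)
The plan is to deduce (ii) and (iii) formally from (i) together with Proposition~\ref{decompose}, so the real content is part (i): showing that $Y_1:=\Lambda_{\Lieg_1}\otimes\R$ and $Y_2:=\Lambda_{\Lieg_2}\otimes\R$ are standard subspaces of $\R^n$. Since $W_\Lieg$ preserves the formal character $\Xi=Z_d^n$, Corollary~\ref{auto} gives $W_\Lieg\subseteq W_{B_n}$; thus $W_\Lieg$ acts by signed permutations of $e_1,\dots,e_n$ and preserves the standard Euclidean metric, and by Lemma~\ref{wk2} the decomposition $\R^n=\bigoplus_k(\Lambda_{\Lieq_k}\otimes\R)$ over the simple factors $\Lieq_k$ of $\Lieg$ is orthogonal. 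For each factor $\Lieq_k\cong A_1$ of $\Lieg_1$, the nontrivial element $\sigma_k\in W_{\Lieq_k}\subseteq W_\Lieg$ is a reflection whose $(-1)$-eigenline is $\ell_k:=\Lambda_{\Lieq_k}\otimes\R$; by Proposition~\ref{BnWeyl}(ii) this line is spanned by a root of $B_n$, hence is either a coordinate line $\R e_i$ (``short'') or a line $\R(e_a+\epsilon e_b)$ with $\epsilon\in\{\pm1\}$ (``long'').

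The key step is to show that long $A_1$-lines occur only in orthogonal pairs. Suppose $\ell_k=\R(e_a+\epsilon e_b)$. I would argue that the complementary long line $\ell':=\R(e_a-\epsilon e_b)$ is $W_\Lieg$-invariant: $\sigma_k$ fixes $e_a-\epsilon e_b$, while any $g\in W_{\Lieq_l}$ with $l\ne k$ acts trivially on $\ell_k$, so $g(e_a+\epsilon e_b)=e_a+\epsilon e_b$, and since $g$ is a signed permutation this forces $g$ either to fix $e_a$ and $e_b$ or to interchange them without sign change, and in both cases $g$ preserves $\ell'$. Now $\R^n=\bigoplus_k(\Lambda_{\Lieq_k}\otimes\R)$ is a decomposition of the $W_\Lieg$-module $\R^n$ into pairwise non-isomorphic irreducibles (Lemma~\ref{wk1}), so every $W_\Lieg$-invariant line is one of the one-dimensional summands, i.e.\ the weight space of an $A_1$-factor; hence $\ell'=\Lambda_{\Lieq_{k'}}\otimes\R$ for some $A_1$-factor $\Lieq_{k'}$ of $\Lieg_1$. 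Thus the long $A_1$-lines split into orthogonal pairs $\{\R(e_a+e_b),\R(e_a-e_b)\}$, each spanning the standard plane $\Span\{e_a,e_b\}$, and $Y_1$ is an orthogonal sum of coordinate lines and such standard planes, hence standard by Lemma~\ref{stdsum}. Then $Y_2=Y_1^\perp$ is standard as well, which proves (i).

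For (ii): since $\R^n=Y_1\oplus Y_2$ is a standard orthogonal decomposition, $\Xi=Z_d^n$ equals the direct product of its orthogonal projections $\Xi_1\subset Y_1$ and $\Xi_2\subset Y_2$, each a multiplicity-free copy of a hypercube $Z_d^{\dim Y_i}$; Proposition~\ref{decompose} then yields $\psi\cong\psi_1\otimes\psi_2$ with $\psi_i$ of formal character $\Xi_i$, hence hypercubic, and faithfulness of each $\psi_i$ follows from that of $\psi$ because $\psi(X,0)=\psi_1(X)\otimes\mathrm{id}$. For (iii): the $A_1$-lines partition into the coordinate lines and the long pairs found above, giving an orthogonal decomposition of $Y_1$ into standard subspaces of dimension $1$ or $2$; applying Proposition~\ref{decompose} repeatedly (as in the proof of Proposition~\ref{reduction1}) factors $\psi_1$ as $\bigotimes_j\psi_{1,j}$ with each $\psi_{1,j}$ a faithful hypercubic representation of $\Lieg_{1,j}$, where $\Lieg_{1,j}=A_1$ for a coordinate line and $\Lieg_{1,j}=A_1\times A_1$ for a long pair.

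I expect the pairing argument of the second paragraph to be the only non-routine point: a priori a single $A_1$-factor could lie along one long root line of $B_n$ with its ``partner'' line not occurring among the weight spaces of $\Lieg$ at all, which would make $Y_1$ non-standard and break the whole scheme; the $W_\Lieg$-invariance of the complementary line, which upgrades it to an actual $A_1$-factor, is what rules this out. Everything else is bookkeeping with Propositions~\ref{decompose} and \ref{BnWeyl} and Lemmas~\ref{wk1}, \ref{wk2}, \ref{stdsum}.
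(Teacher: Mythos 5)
Your proof is correct. Part (i) is the only substantive point, and your key pairing argument goes through: for a long $A_1$-line $\ell_k=\R(e_a+\epsilon e_b)$, the reflection $\sigma_k$ fixes $e_a-\epsilon e_b$, and any $g$ in a Weyl factor $W_{\Lieq_l}$, $l\neq k$, fixes $e_a+\epsilon e_b$ and is a signed permutation (since $W_\Lieg\subset\Omega=W_{B_n}$ by Corollary \ref{auto}), hence preserves $\R(e_a-\epsilon e_b)$; and since $\R^n=\bigoplus_k(\Lambda_{\Lieq_k}\otimes\R)$ is a sum of pairwise non-isomorphic non-trivial irreducibles (each factor $W_{\Lieq_{k'}}$ acts non-trivially only on its own summand), any invariant line must be a one-dimensional summand, i.e.\ the weight line of another $A_1$-factor. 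The deduction of (ii) and (iii) from (i) via Proposition \ref{decompose} is the same bookkeeping the paper does.

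The route differs from the paper's in two ways worth noting. First, the paper proves (i)--(iii) simultaneously by induction on $n$, peeling off one $A_1$ (or $A_1\times A_1$) at a time, whereas you establish (i) globally in one pass and only then invoke Proposition \ref{decompose}. Second, and more substantively, the mechanism for producing the partner line differs: in the long-root case the paper looks at the fixed locus $\mathbb{H}_n^\sigma=L\times\mathbb{H}_{n-2}$ of the hypercube and uses the length mismatch $2\sqrt2\neq 2$ to force $W_{\Lieg'}$ to preserve $\Span_\R L$, then applies Lemma \ref{wk2}(i) and a dimension count; you instead verify invariance of the orthogonal line directly from the signed-permutation structure and identify it via the isotypic decomposition. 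Both arguments land on the same fact (the partner of a long $A_1$-line is itself an $A_1$ weight line, so the two span a standard plane), and yours is arguably cleaner in that it avoids the metric comparison of the two factors of $\mathbb{H}_n^\sigma$; the one sub-claim you lean on implicitly --- that the summands $\Lambda_{\Lieq_k}\otimes\R$ are pairwise non-isomorphic as $W_\Lieg$-modules, which is immediate from the product structure but not literally stated in Lemma \ref{wk1} --- deserves a sentence if you write this up.
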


\begin{proof}
We prove (i),(ii),(iii) simultaneously by induction on $n$. It is obviously true when $n=1$.
Suppose $\Lieg=A_1\times\Lieg'$ and let $\sigma$ be the reflection in $W_{A_1}\subset W_{\Lieg}$.
Since $W_{\Lieg}$ preserves $Z_d^n$ and thus the hypercube $\mathbb{H}_n$,
it is a subgroup of $W_{B_n}$ (the automorphism group of $\mathbb{H}_n$) by Proposition \ref{BnWeyl}(i).
Then Proposition \ref{BnWeyl}(ii) asserts that $\sigma$ is induced from a root $\alpha$ of $B_n$.
We consider two cases.

If $\alpha$ is a short root, then $\Lambda_{A_1}\otimes \R$ and thus $\Lambda_{\Lieg'}\otimes \R$ (by Lemma \ref{wk2}(i)) are standard subspaces of $\R^n$. By Proposition \ref{decompose},
$\psi$ is the external tensor product of a representation of $A_1$ and a (hypercubic) representation 
$\psi'$ of $\Lieg'$.
The result follows from the induction hypothesis on $\psi'$.

If $\alpha$ is a long root, then $W_{\Lieg'}$ preserves the fixed part
$\mathbb{H}_n^\sigma$ 
of the hypercube under $\sigma$ by Lemmas \ref{wk1} and \ref{wk2}.
Suppose the long root $\alpha=e_1+e_2$ without loss of generality. Then 
\begin{equation}\label{fixpt}
\mathbb{H}_n^\sigma=\{(x_1,x_2)\in\R^2:~ x_1+x_2=0,~|x_1|,|x_2|\leq 1\}\times \mathbb{H}_{n-2}=:L\times \mathbb{H}_{n-2}.
\end{equation}
Since the line segment $L$ has length $2\sqrt{2}$ different from $2$ (length of a side of $\mathbb{H}_{n-2}$),
$W_{\Lieg'}$ preserves each factor in \eqref{fixpt}. Thus, $\text{Span}_\R L$ is a
one-dimensional subrepresentation of $W_{\Lieg'}$.  
By Lemma \ref{wk2}(i) and dimension considerations, we obtain $\Lieg'=A_1\times\Lieg''$
and $W_{A_1}$ acts non-trivially on the line.
It follows that $\Lambda_{A_1\times A_1}\otimes\R=\text{Span}_\R\{e_1,e_2\}$
and $\Lambda_{\Lieg''}\otimes\R= \text{Span}_\R\{e_i: ~i>2\}$
are standard subspaces of $\R^n$.
By Proposition \ref{decompose},
$\psi$ is the external tensor product of a hypercubic representation of $A_1\times A_1$ and a hypercubic representation 
$\psi''$ of $\Lieg''$.
We are done by the induction hypothesis on $\psi''$.
\end{proof}

Given a simple Lie algebra $\Lieg$, we list some equal-rank subalgebra $\Lieg'\leq \Lieg$ (using \cite[Table 5]{GOV94}):
\begin{enumerate}
\item $A_1^{r}\leq B_{r}$ for $r\geq 2$.
\item $A_1^r\leq C_r$ for $r\geq 3$.
\item $A_1^{2(r-1)}\times A_3\leq D_{2r+1}$ for $r\geq 1$.
\item $A_1^{2r}\leq D_{2r}$ for $r\geq 2$.
\item $A_2\times A_2\times A_2\leq E_6$, 
\item $A_7\leq E_7$, 
\item $A_8\leq E_8$,
\item $A_2\times A_2\leq F_4$,
\item $A_2\leq G_2$.
\end{enumerate}

\begin{prop}\label{exclude}
Let $\Lieq$ be a simple factor of $\Lieg$. Then $\Lieq$ cannot be of 
type $A_2$, $A_r$ for any $r\geq 4$, and exceptional type.
\end{prop}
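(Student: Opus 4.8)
The plan is to exploit the structure established so far: we have a faithful hypercubic representation $\psi:\Lieg\to\End(V)$ with formal character $\Xi=Z_d^n$, and by Proposition~\ref{extract} we may assume $\Lieg$ has \emph{no} $A_1$-factor, so it suffices to rule out a simple factor $\Lieq$ of type $A_2$, $A_r$ ($r\geq 4$), or exceptional type. The Weyl group $W_\Lieg$ preserves $\Xi=Z_d^n$, hence preserves the hypercube $\mathbb H_n$, so $W_\Lieg\subset W_{B_n}$ by Proposition~\ref{BnWeyl}(i). The key point is that each reflection in $W_\Lieq\subset W_\Lieg$ is, by Proposition~\ref{BnWeyl}(ii), induced from a root of $B_n$, i.e.\ a vector of the form $\pm e_i$ or $\pm e_i\pm e_j$. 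So the root system $\Phi_\Lieq$ of $\Lieq$ embeds (via its reflection vectors, up to scaling each root line) into $\Phi_{B_n}$ inside $\Lambda_{\Lieq}\otimes\R$, which is a standard-type subspace with its induced Euclidean structure.

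First I would record the rigidity this forces. For $A_2$: its root system is the hexagonal one, in which two simple roots meet at $120^\circ$ and the three positive root lines are pairwise at $60^\circ$. But any two distinct root \emph{lines} coming from $\Phi_{B_n}$ span a plane $P$ that by Proposition~\ref{Bnroot}(i) contains at most the roots lying in it, and Proposition~\ref{Bnroot}(iii)--(iv) classifies the possible angle configurations: two long roots in a $3$-dimensional coordinate span meet at $60^\circ$ or $120^\circ$ but then the plane they span contains $6$ roots forming an $A_2$ \emph{up to the wrong metric}—more precisely, I would check that a plane in $\R^n$ spanned by roots of $\Phi_{B_n}$ never reproduces the $A_2$ root lengths/angles needed simultaneously with the requirement that $W_\Lieq$ act irreducibly on $\Lambda_\Lieq\otimes\R$ with that plane being $2$-dimensional; the obstruction is that the Weyl group of $A_2$ is $S_3$ acting on a $2$-plane, whereas the subgroup of $W_{B_2}$ (order $8$) or of $W_{B_3}$ generated by reflections in three coplanar root lines is a dihedral group of order $4$ or $6$, and $S_3\cong D_6$ would require exactly three root lines at $60^\circ$ in a plane, which Proposition~\ref{Bnroot}(iv) shows forces the plane to be standard and to contain $8$ roots (type $B_2/C_2$, Weyl group order $8$), not the hexagonal configuration. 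This contradiction eliminates $A_2$.

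Next, for $A_r$ with $r\geq 4$ and for the exceptional types, I would invoke the list of equal-rank subalgebras given just before the proposition together with Wright's theorem on minimal embeddings of symmetric groups \cite{Wri75} (flagged in the introduction's outline as critical to exactly this reduction). The Weyl group $W_{A_r}=S_{r+1}$ must embed into $W_{B_n}$ compatibly with the root geometry; but $W_{B_n}$ contains $S_{r+1}$ acting irreducibly only on a space of dimension $\geq r$ (the standard reflection representation), and the minimal-degree faithful real representation of $S_{r+1}$ for $r+1\geq 7$ is exactly $r$-dimensional and is the standard one, which does \emph{not} preserve a hypercube lattice $Z_d^r$ with all lengths equal unless it is the permutation-plus-sign action—i.e.\ type $B$ or $D$, not $A$. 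Concretely: the standard $S_{r+1}$-module, realized inside $\R^{r+1}$ as the sum-zero hyperplane, has weights that cannot be arranged as $Z_d^r$ because the $S_{r+1}$-orbit structure on $Z_d^r$ is wrong (the stabilizer of a vertex of $\mathbb H_r$ in $W_{B_r}\cap S_{r+1}$ is too small). For the exceptional $\Lieq$ (types $E_6, E_7, E_8, F_4, G_2$), the same equal-rank-subalgebra list plus the observation that $\Lieq\leq\Lieg$ and $\Lieg\hookrightarrow B_n$ forces $\Lieq\hookrightarrow B_n$ as a subsystem subalgebra, but the exceptional root systems contain $A_2$-subsystems (e.g.\ via the listed $A_2\times A_2\times A_2\leq E_6$, $A_2\leq G_2$, etc.), and an $A_2$-subsystem of $\Lieq$ would give rise to an $A_2$-configuration of root lines in $\Phi_{B_n}$, contradicting the $A_2$ case already handled.

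The main obstacle I anticipate is making the $A_2$-exclusion airtight: one must carefully separate the ``root line'' data (which is what Proposition~\ref{BnWeyl}(ii) controls) from actual root lengths, and then show that no planar configuration of $\Phi_{B_n}$-root-lines carries an order-$6$ dihedral reflection group acting irreducibly—this is precisely where Proposition~\ref{Bnroot}(iii)--(iv) does the work, and I would organize the argument as: (1) such a plane $P$ is spanned by two roots; (2) if $P$ is non-standard it has $4$ or $6$ roots by (iii), all long, giving reflection group of order $\leq 6$ but with the hexagon's $60^\circ$ angles forcing $d=3$ and the specific configuration of Proposition~\ref{Bnroot}(v)'s remark; (3) checking that configuration's reflection subgroup of $W_{B_n}$ does not act on a $2$-dimensional standard-coordinate subspace—rather it lives in a $3$-space—so $\Lambda_{A_2}\otimes\R$ would not be standard, whence by Lemma~\ref{wk2}(i) and the argument pattern of Proposition~\ref{extract} the hypercube's fixed-space geometry is violated. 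Once $A_2$ is gone, everything else cascades from the subsystem-subalgebra list and Wright's bound.
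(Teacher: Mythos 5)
Your proposal correctly identifies the two ingredients the paper uses (the containment $W_\Lieg\subset W_{B_n}$ from Proposition \ref{BnWeyl} and Wright's theorem, applied after reducing via the equal-rank subalgebra list), but the execution has genuine gaps. The most serious is the $A_2$ case: your central claim that no plane spanned by root lines of $\Phi_{B_n}$ carries a hexagonal configuration with reflection group $S_3$ is false. Taking $\alpha=e_1-e_2$ and $\beta=e_2-e_3$, the plane they span is non-standard and, by Proposition \ref{Bnroot}(iii) in the case $d=3$, contains exactly six long roots, namely $\pm(e_1-e_2)$, $\pm(e_2-e_3)$, $\pm(e_1-e_3)$ --- a genuine $A_2$ root system on which the reflections generate $S_3$ acting irreducibly. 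You partially concede this in your step (2), but your step (3) (``the hypercube's fixed-space geometry is violated'') is an assertion, not an argument: to get a contradiction one must still show that the orthogonal complement of this plane cannot be spanned by $B_n$-roots coming from the remaining simple factors (as in the $D_3$ analysis of $\mathsection$\ref{s2.6}), and separately dispose of the case where $A_2$ is the whole of $\Lieg$. The paper avoids all of this: it excludes $A_2$ by exactly the same counting argument as $A_r$ for $r\geq 4$, the point being that $\rank(A_2)=2$ while $\mu(S_3)=3$.

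The second gap is in your use of Wright's theorem. Wright's result concerns the minimal degree of a faithful \emph{permutation} representation, and the correct deployment is: (1) compose $W_\Lieg\hookrightarrow W_{B_n}\to S_n$ and prove injectivity, which requires that the factors of $W_\Lieg$ have no nontrivial normal $2$-subgroup --- this is precisely why the paper replaces each $S_4=W_{A_3}$ by its subgroup $S_3$, a step absent from your proposal; (2) compare $\mu\bigl(S_3^r\times\prod_i S_{n_i+1}\bigr)=3r+\sum_i(n_i+1)$ against $n=3r+\sum_i n_i$ to force $k=0$. Your version instead invokes minimal-dimensional real \emph{linear} representations and unspecified orbit structures on $Z_d^r$, which is a different and unproved statement, and you never perform the global count over all simple factors simultaneously; a single-factor argument does not suffice, since $S_{r+1}$ can a priori act faithfully on up to $2r$ of the $n$ coordinates touched by its root lines. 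Finally, your cascade for the exceptional types is off for $E_7$ and $E_8$: the relevant equal-rank subalgebras are $A_7$ and $A_8$, so these reduce to the $A_r$ ($r\geq 4$) case, not to $A_2$, and a non-equal-rank $A_2$ subsystem would not let you run the complement argument on $\Lambda_{\Lieq}\otimes\R$.
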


\begin{proof}
If $\Lieg'$ is a semisimple subalgebra of $\Lieg$ of equal-rank, then $\psi|_{\Lieg'}$ is also rectangular.
Hence, we may assume each simple factor of $\Lieg$ is of type $A$ by the list above.
By Proposition \ref{extract}, we may further assume that $\Lieg$ has no $A_1$-factors.
To prove the result, it suffices show that $\Lieg$ has only $A_3$-factors.

Suppose $\Lieg=A_3^r\times \Lieq_1\times\cdots\Lieq_k$
for some $k\in\N$, where each $\Lieq_i$ is equal to $A_{n_i}$ 
such that $n_i\in \{2\}\cup\N_{\geq 4}$.
It follows from Lemma \ref{wk1} and Corollary \ref{auto} that 
$$S_3^r\times \prod_{i=1}^k S_{n_i+1}\leq S_4^r\times \prod_{i=1}^k S_{n_i+1}
=W_{A_3}^r\times\prod_{i=1}^k W_{\Lieq_i}=W_\Lieg\subset \Omega=W_{B_n}.$$
Since the symmetric group $S_m$ does not have non-trivial normal $2$-subgroup 
for $m\in\{3\}\cup\N_{\geq 5}$, the homomorphism (via the inclusions above and Proposition \ref{BnWeyl}(i))
$$S_3^r\times \prod_{i=1}^k S_{n_i+1}\hookrightarrow W_{B_n}\to S_n$$
is injective. Since the rank of $\Lieg$ is $n=3r+n_1+\cdots+n_k$, 
we obtain a contradiction by Proposition \ref{symgp} below, which 
asserts that $3r+(n_1+1)+\cdots+(n_k+1)\leq n$.
Therefore, we conclude that $\Lieg$ has only $A_3$-factors.
\end{proof}

\begin{prop}\label{symgp}
Let $i: S_{m_1}\times S_{m_2}\times\cdots\times S_{m_t}\to S_n$ be an injective homomorphism such that 
$m_i>1$ for all $1\leq i\leq t$. Then $m_1+m_2+\cdots+m_t\leq n$.
\end{prop}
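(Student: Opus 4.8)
The plan is to prove the contrapositive-flavored statement directly: given an injection $i\colon S_{n_1}\times\cdots\times S_{n_k}\hookrightarrow S_n$ with every $n_j>1$, I will exhibit $n$ points and $k$ pairwise disjoint subsets on which the factors act nontrivially. Concretely, I would let $G:=S_{n_1}\times\cdots\times S_{n_k}$ act on the set $\{1,\dots,n\}$ via $i$, and for each $j$ consider the subgroup $H_j:=1\times\cdots\times S_{n_j}\times\cdots\times 1$. Since $i$ is injective, each $H_j$ acts nontrivially, so there is at least one $G$-orbit (equivalently $H_j$-orbit, since the other factors may move points within it too — better to phrase it with $H_j$-orbits) on which $H_j$ acts nontrivially. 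The key observation is that $H_j$ and $H_{j'}$ commute for $j\neq j'$, which will let me argue that the "supports'' can be separated.

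The main step: for each $j$, let $\Omega_j\subseteq\{1,\dots,n\}$ be the set of points moved by $H_j$, i.e. $\Omega_j=\{x : h\cdot x\neq x \text{ for some } h\in H_j\}$, the support of $H_j$. First I claim $|\Omega_j|\ge n_j$: restrict to a single $H_j$-orbit $O$ inside $\Omega_j$ on which $H_j$ acts nontrivially; the kernel of $S_{n_j}\to\mathrm{Sym}(O)$ is a normal subgroup, hence trivial or $A_{n_j}$ or all of $S_{n_j}$ (using that $A_{n_j}$ is simple for $n_j\ge 5$ and handling $n_j\in\{2,3,4\}$ by inspection — for $n_j=4$ the normal subgroups are $1$, $V_4$, $A_4$, $S_4$). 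In every case a nontrivial quotient of $S_{n_j}$ that embeds in a symmetric group $\mathrm{Sym}(O)$ forces $|O|\ge n_j$: indeed the smallest faithful permutation representation of any nontrivial quotient of $S_{n_j}$ has degree $\ge n_j$ for $n_j\ne 4$ (Wright's theorem, already invoked in \ref{s2.6}, or an elementary argument), and for $n_j=4$ one checks $S_4/V_4\cong S_3$ needs degree $3<4$ — so here I must be slightly more careful and instead argue with $|\Omega_j|$ rather than a single orbit: $S_4$ acting with kernel $V_4$ still moves at least $4$ points in total once we note $V_4$ itself must then be realized... Actually the cleanest route is: the smallest degree of a faithful transitive $S_{n_j}$-action... hmm. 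Let me instead not restrict to one orbit. The support $\Omega_j$ carries a faithful $H_j$-action (faithful because $i$ is injective and the complement of $\Omega_j$ is fixed pointwise by $H_j$), so $|\Omega_j|$ is at least the minimal faithful permutation degree of $S_{n_j}$, which is $n_j$ for all $n_j\ge 2$ (standard; e.g. \cite{Wri75}). That settles $|\Omega_j|\ge n_j$ cleanly.

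Finally I must show the $\Omega_j$ are pairwise disjoint, whence $n\ge\sum_j|\Omega_j|\ge\sum_j n_j$. Suppose $x\in\Omega_j\cap\Omega_{j'}$ with $j\neq j'$. Pick $h\in H_j$, $h'\in H_{j'}$ with $h\cdot x\neq x$. The orbit $G\cdot x$ is a transitive $G$-set, so it is isomorphic to $G/G_x$ for the stabilizer $G_x$; restricting to the commuting pair $H_j,H_{j'}$, I would use that on a transitive $G$-set the actions of two commuting subgroups cannot both be nontrivial on an overlapping region in an unconstrained way — more precisely, I expect the main obstacle to be exactly this disjointness claim, since it is false at the level of orbits (both factors can move the same point of a single orbit) and only becomes true after a correct bookkeeping. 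The fix I anticipate: replace "support'' by working orbit-by-orbit with a counting/averaging argument, or — more robustly — induct on $k$. For the induction, set $N:=S_{n_k}$ viewed inside $S_n$; decompose $\{1,\dots,n\}$ into $N$-orbits, let $\{1,\dots,n\}=\coprod_a O_a$; the complementary factor $S_{n_1}\times\cdots\times S_{n_{k-1}}$ commutes with $N$, hence permutes the $O_a$ while respecting the $N$-action, so it acts on $\coprod_a O_a$ and in particular faithfully on the union $Y$ of those orbits on which it acts nontrivially, while $N$ acts faithfully on the union $Z$ of orbits on which $N$ acts nontrivially; one then checks $Y$ and $Z$ are disjoint because an orbit on which $N$ acts nontrivially is permuted among its $N$-isomorphic copies by the other factors and this motion is "already counted'', giving $n\ge|Y|+|Z|\ge (n_1+\cdots+n_{k-1})+n_k$ by the inductive hypothesis applied to the faithful action on $Y$. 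I would write out the induction carefully; the base case $k=1$ is the minimal-degree fact $|{\{1,\dots,n\}}|\ge n_1$ for a faithful $S_{n_1}$-action.
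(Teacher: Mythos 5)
There is a genuine gap, and it sits exactly where you suspected: the pairwise disjointness of the supports $\Omega_j$ is false, and so is the disjointness of $Y$ and $Z$ in your inductive fix. Take $k=2$, $n_1=n_2=2$, $n=4$, and let $i\colon S_2\times S_2\to S_4$ send the two generators to $(12)(34)$ and $(13)(24)$; the image is the Klein four-group, $i$ is injective, and both factors have support equal to all of $\{1,2,3,4\}$, so $|\Omega_1|+|\Omega_2|=8>4=n$. The same example defeats the induction: the $N$-orbits for $N=\langle(13)(24)\rangle$ are $\{1,3\}$ and $\{2,4\}$, $N$ moves points in both (so $Z=\{1,2,3,4\}$), and the complementary factor swaps the two orbits (so $Y=\{1,2,3,4\}$ as well); the claimed bound $n\ge|Y|+|Z|$ would read $4\ge 8$. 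The conclusion of the proposition of course still holds here ($2+2\le 4$), but not by your argument. The underlying difficulty is that a faithful transitive action of a direct product need not ``split'' between the factors (the point stabilizer need not be a product subgroup), which is why the additivity $\mu(G\times H)=\mu(G)+\mu(H)$ of minimal faithful permutation degrees is a genuine theorem for special classes of groups and is known to fail for general finite groups. No elementary support-counting argument can establish it.

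The fix is short: the statement you need is precisely what Wright proves in \cite{Wri75}, namely $\mu(S_{n_1}\times\cdots\times S_{n_k})=\sum_{i}\mu(S_{n_i})=\sum_i n_i$, and this is exactly how the paper proves the proposition (a one-line citation). You invoked Wright only for the single-factor fact $\mu(S_m)=m$, which is elementary and not where the content lies; the content of his paper is the additivity across the direct product, which is the proposition itself.
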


\begin{proof}
For a finite group $G$, let $\mu(G)$ denote the smallest positive integer $m$ such that there exists an injective homomorphism $G \hookrightarrow S_m$. Wright \cite{Wri75} proved that
$$\mu(S_{m_1}\times S_{m_2}\times\cdots\times S_{m_t})=\sum_{i=1}^t \mu(S_{m_i})=\sum_{i=1}^t m_i,$$
which completes the proof.
\end{proof}

\subsection{Proof of Theorem \ref{main1}(i)}\label{s2.6}
By Propositions \ref{reduction1}, \ref{extract}(ii) and \ref{exclude},
 we may assume that $\psi$ is hypercubic 
and $$\Lieg=\Lieq_1\times\Lieq_2\times\cdots\times\Lieq_k$$ is a product 
of simple Lie algebras such that $\Lieq_i$ is of type 
$B_m$ ($m\geq 2$) or $C_m$ ($m\geq 3$) or $D_m$ ($m\geq 3$ and $D_3=A_3$).
Suppose $\Xi\subset\Lambda_\Lieg\otimes\R$ is equal to $Z_d^n\subset\R^n$.
By Proposition \ref{decompose}, it suffices to show that 
each subspace $\Lambda_{\Lieq_i}\otimes\R\subset\R^n$
is standard.

Let $\Lieq$ be a simple factor of $\Lieg$. Then $W_{\Lieq}$ is a subgroup of $W_{\Lieg}\subset \Omega=W_{B_n}$
by Lemma \ref{wk1} and Corollary \ref{auto}. For a reflection $\sigma\in W_{\Lieq}$, denote the line 
of $-1$-eigenspace of $\sigma$ by $L_{\sigma}\subset\R^n$. We record the following observations.

\begin{lemma}\label{observe}
The following assertions hold for the subspace $\Lambda_{\Lieq}\otimes\R\subset\R^n$.
\begin{enumerate}[(i)]
\item The subspace $\Lambda_{\Lieq}\otimes\R$ 
is spanned by $\{L_\sigma:~\sigma~\text{is a reflection in }W_{\Lieq}\}$.
\item The subspace $\Lambda_{\Lieq}\otimes\R$ 
is spanned by some roots in $\Phi_{B_n}\subset\R^n$ in \eqref{rootsys}.
\end{enumerate}
\end{lemma}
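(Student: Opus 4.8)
\textbf{Proof plan for Lemma \ref{observe}.}
The statement is a ``warm-up'' structural fact whose purpose is to reduce questions about the subspace $\Lambda_\Lieq\otimes\R\subset\R^n$ to questions about individual roots of $\Phi_{B_n}$, so the two parts (i) and (ii) should be proved in that logical order: first identify $\Lambda_\Lieq\otimes\R$ as the span of the $-1$-eigenlines $L_\sigma$ of reflections in $W_\Lieq$, then upgrade each such line to a root of $\Phi_{B_n}$. The first plan is to prove (i). Since $\Lieq$ is simple, Lemma \ref{wk1} tells us that the $W_\Lieq$-action on $\Lambda_\Lieq\otimes\R$ is irreducible and non-trivial, and by Lemma \ref{wk2}(i) (applied with the standard Euclidean metric on $\R^n$, which makes $W_\Lieg\subset\Omega=W_{B_n}$ orthogonal by Corollary \ref{auto}) the subspace $\Lambda_\Lieq\otimes\R$ is an orthogonal direct summand of $\R^n$. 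Let $V_0:=\Span_\R\{L_\sigma:\sigma\in W_\Lieq\text{ a reflection}\}$. Each reflection $\sigma\in W_\Lieq$ acts as the identity on $(\Lambda_\Lieq\otimes\R)^\perp$ (because $W_\Lieq$ acts trivially there, $W_\Lieq$ being a factor acting on its own summand) and as a genuine reflection on $\Lambda_\Lieq\otimes\R$, so $L_\sigma\subset\Lambda_\Lieq\otimes\R$; hence $V_0\subseteq\Lambda_\Lieq\otimes\R$. Conversely, $V_0$ is $W_\Lieq$-stable (the reflections are permuted by conjugation, so their eigenlines are permuted), hence its orthogonal complement inside $\Lambda_\Lieq\otimes\R$ is also $W_\Lieq$-stable; by irreducibility (Lemma \ref{wk1}) either $V_0=0$ or $V_0=\Lambda_\Lieq\otimes\R$, and $V_0\neq 0$ since a simple Lie algebra has a non-empty root system, i.e.\ $W_\Lieq$ contains a reflection. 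This proves (i).

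The second plan is to prove (ii) by showing each line $L_\sigma$ appearing in (i) is spanned by a root of $\Phi_{B_n}$. Here the key input is that $W_\Lieq\subset W_\Lieg\subset\Omega=W_{B_n}$ (Lemma \ref{wk1} and Corollary \ref{auto}), so any reflection $\sigma\in W_\Lieq$ is in particular a reflection of $W_{B_n}$; by Proposition \ref{BnWeyl}(ii), the $-1$-eigenspace of any reflection in $W_{B_n}$ is spanned by a root $\alpha\in\Phi_{B_n}$, i.e.\ $L_\sigma=\R\alpha$ with $\alpha\in\Phi_{B_n}$. Combining with (i), $\Lambda_\Lieq\otimes\R=\Span_\R\{L_\sigma\}=\Span_\R\{\text{certain roots of }\Phi_{B_n}\}$, which is (ii).

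I do not expect a serious obstacle here — both parts are short linear-algebra arguments built directly on the already-established Propositions \ref{BnWeyl} and the two ``well-known'' Lemmas \ref{wk1}, \ref{wk2}. The one point requiring a little care is the claim in (i) that a reflection $\sigma$ belonging to the factor $W_\Lieq$ genuinely restricts to a reflection of the summand $\Lambda_\Lieq\otimes\R$ and to the identity on the complementary summands; this is exactly the content of Lemma \ref{wk1}'s identification $W_\Lieg=\prod_i W_{\Lieq_i}$ acting block-diagonally on $\bigoplus_i(\Lambda_{\Lieq_i}\otimes\R)$, so it should be invoked explicitly rather than left implicit. A secondary subtlety is that one should note $W_\Lieq\neq\{1\}$ (equivalently $\Phi_\Lieq\neq\emptyset$), which holds since $\Lieq$ is simple hence has positive rank; this guarantees $V_0\neq 0$ and lets irreducibility finish the argument.
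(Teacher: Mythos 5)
Your proposal is correct and follows the same route as the paper, which simply declares (i) obvious and deduces (ii) from (i) together with Proposition \ref{BnWeyl}(ii); your irreducibility argument (via Lemma \ref{wk1}) is a valid way to fill in the ``obvious'' part, and an even quicker one is to note that the roots of the simple $\Lieq$ span $\Lambda_\Lieq\otimes\R$ and each root line is an $L_\sigma$.
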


\begin{proof}
The first assertion is obvious, and the second follows from the first assertion and Proposition \ref{BnWeyl}(ii).
\end{proof}

If $\Lieq$ is of type $B_m$ ($m\geq 2$) or $C_m$ ($m\geq 3$), 
then Lemmas \ref{wk2}(ii), \ref{observe}(i), and \eqref{rootsys} 
imply that $\Lambda_{\Lieq}\otimes\R$ is spanned by 
certain planes $P$ such that each $P$ contains four distinct lines $L_\sigma$ 
(coming from the configuration of a $B_2$ root system). 
Since the four lines correspond to eight roots of $B_n$ by Proposition \ref{BnWeyl}(ii), each such $P$ is standard by Proposition \ref{Bnroot}(iv) and thus
$\Lambda_{\Lieq}\otimes\R$ is standard by Lemma \ref{std}.

Now, we suppose $\Lieq$ is of type $D_m$ ($m\geq 3$) and it suffices to show that $\Lambda_{\Lieq}\otimes\R$ is standard. Note the following equal-rank subalgebras $\Lieq'\subset\Lieq$ where $m$ is either $2r+1$ or $2r$.
\begin{enumerate}[(1)]
\item $A_1^{2(r-1)}\times A_3\leq D_{2r+1}$ for $r\geq 1$.
\item $A_1^{2r}\leq D_{2r}$ for $r\geq 2$.
\end{enumerate}
Let $\Lieg'$ be the equal-rank subalgebra of $\Lieg$ obtained by 
restricting the simple factor 
$\Lieq$ of $\Lieg$ to $\Lieq'$
and letting all other factors remain unchanged.
Since the restriction $\psi|_{\Lieg'}$ is hypercubic and all the $A_1$-factors
come from $\Lieq'$, Proposition \ref{extract}(i) implies
that $\Lambda_{A_1^{2(r-1)}}\otimes\R$ is standard if $m=2r+1$ (resp. $\Lambda_{A_1^{2r}}\otimes\R$ is standard if $m=2r$). Since $\Lambda_{\Lieq}\otimes\R=\Lambda_{\Lieq'}\otimes\R$ in $\R^n$,
it is standard if we can show that $\Lambda_{A_3}\otimes\R$ is standard.

It remains to consider the case $\Lieq_1=D_3=A_3$.
Lemmas \ref{wk2}(ii), \ref{observe}(i), and \eqref{rootsys} 
imply that $Q:=\Lambda_{\Lieq_1}\otimes\R$ is a $3$-space spanned by six distinct lines $L_\sigma$ 
such that the angle between any two distinct lines is either $90^\circ$ or $60^\circ/120^\circ$.
The six lines contain $12$ roots $\alpha$ of $B_n$ by Proposition \ref{BnWeyl}(ii).
If $\alpha\in L_\sigma$ is a short root, then the angle between $L_\sigma$
and any other of the five lines $L_{\sigma'}$ is a multiple of $45^\circ$ by \eqref{rootsys}.
It follows that $L_\sigma$ is perpendicular to $L_{\sigma'}$ as $60^\circ/120^\circ$
is not a multiple of $45^\circ$. Since $Q$ is a $3$-space, ten roots of $B_n$ lie on the plane
spanned by the five lines,
which is absurd (Proposition \ref{Bnroot}(iv)). 
Hence, the twelve roots $\alpha$ are all long roots of $B_n$.
If $Q$ is not standard, it follows from
Proposition \ref{Bnroot}(v) that there exist a standard $4$-space $U$ spanned by $\{e_i,e_j,e_l,e_s\}$
and $v=e_i+\delta_1e_j+\delta_2e_l+\delta_3e_s$ ($\delta_1,\delta_2,\delta_3\in\{\pm1\}$) such that $Q\subset U$ is the orthogonal complement of $v$ in $U$.
Then Lemma \ref{wk2}(i) implies that 
$$\mathrm{Span}_\R\{e_t:~1\leq t\leq n,~t\notin\{i,j,l,s\}\}\oplus \R v
=(\Lambda_{\Lieq_2}\otimes\R)\oplus\cdots\oplus(\Lambda_{\Lieq_k}\otimes\R).$$
But this equation is impossible by Lemma \ref{observe}(ii), Proposition \ref{Bnroot}(i),
and the fact that $v$ has four non-zero coordinates.
We conclude that $Q$ is standard.
\qed

\subsection{Proof of Theorem \ref{main1}(ii)}\label{s2.7}
By Propositions \ref{reduction1} and \ref{extract}(iii), the rectangular representation $\psi_1$ 
of $\Lieg_1$ is the external tensor product $\bigotimes_{j=1}^{s} \psi_{1,j}$
of the hypercubic representations $\psi_{1,j}$ of $\Lieg_{1,j}$,
where $\Lieg_{1,j}$ is either $A_1$ or $A_1\times A_1$.
If $\Lieg_{1,j}=A_1$, then Theorem \ref{main1}(ii)(a),(b) are the only possibilities
since formal character $\Xi_{1,j}$ of $\psi_{1,j}$ is rectangular and each weight is of multiplicity one.

Next, suppose $\Lieg_{1,j}=A_1\times A_1$ and $\psi_{1,j}$ is not the external tensor product of
two rectangular representations (otherwise, we reduce to the previous case).
Let $\Z^2\subset\R^2=\Lambda_{A_1\times A_1}\otimes\R$ 
be the weight lattice of $A_1\times A_1$ so that $\Z\times\{0\}$
(resp. $\{0\}\times \Z$) corresponds to the weight lattice of the first (resp. second) $A_1$.
If $d+1$ (at least $2$) is the length of the faithful hypercubic $\psi_{1,j}$, then 
the formal character is equal to 
\begin{equation}\label{sq}
\{(x,y)\in\Z^2:~|x|+|y|\leq d~~\text{and}~~ x+y\equiv d~(\text{mod}~2)\}.
\end{equation}

\begin{figure}[h]
\ctikzfig{fig3}
\caption{\eqref{sq} for $d=4$}
\end{figure}

Suppose $\Z_{\geq 0}^2$ is the set of dominant weights.
It follows that $(0,d),(1,d-1),(2,d-2),....,(d,0)$ are highest weights of \eqref{sq}
and thus 
$$\bigoplus_{0\leq i\leq d} (A_1, \mathrm{Sym}^i(\mathrm{Std}))\otimes (A_1, \mathrm{Sym}^{d-i}(\mathrm{Std}))$$
is a subrepresentation of $\psi_{1,j}$. We then obtain
$$(d+1)^2= \sum_{0\leq i\leq d} (d+1)\leq \sum_{0\leq i\leq d} (i+1)(d-i+1) \leq |\Xi_{1,j}|=(d+1)^2,$$
which implies $d+1=(i+1)(d-i+1)$ for all $i$. By putting $i=1$, we obtain $d=1$ and 
$(A_1 \times A_1, \psi_{1,j})=(A_1 \times A_1, (\mathrm{Std} \otimes \mathbb 1) \oplus (\mathbb 1 \otimes \mathrm{Std}))$.\qed

\subsection{Proof of Theorem \ref{main1}(iii)}\label{s2.8}
\subsubsection{}
By Theorem \ref{main1}(i) and Proposition \ref{exclude}, 
it suffices to classify hypercubic representations $\psi$ of simple Lie algebra $\Lieq$,
where $\Lieq$ is $A_3$, or $B_n$ ($n\geq 2$), or $C_n$ ($n\geq 3$), or $D_n$ ($n\geq 4$).
Note that a hypercubic representation $\psi$ is \emph{weight multiplicity-free}, 
i.e., every weight has multiplicity one. 
We present the classification of irreducible weight multiplicity-free representations of simple Lie algebras, as given by Howe.

\begin{theorem}\cite[Theorem~4.6.3]{How92}\label{thm_WMF}
    The non-trivial irreducible weight multiplicity-free representations 
		of a simple Lie algebra are those on the following lists.
    \begin{enumerate}
        \item[$A_n$:] \begin{enumerate}[(i)]
				\item the alternating powers $\bigwedge^m(\C^{n+1})$ of the standard representation, and
				\item the symmetric powers $\mathrm{Sym}^m(\C^{n+1})$ and $\mathrm{Sym}^m(\C^{n+1})^\vee$
				of the standard representation and its dual.
				\end{enumerate}
        \item[$B_n$:] \begin{enumerate}[(i)]
				\item the standard representation on $\C^{2n+1}$, and
				\item the spin representation $\mathrm{Spin}$.
				\end{enumerate}
				\item[$C_n$:] \begin{enumerate}[(i)]
				\item the standard representation on $\C^{2n}$, and
				\item if $n=2$ or $3$, the last fundamental representation, on $\bigwedge_{\mathrm{prim}}^2(\C^4)$
				and $\bigwedge_{\mathrm{prim}}^3(\C^6)$ respectively (of dimensions $5$ and $14$).
				\end{enumerate}
				\item[$D_n$:] \begin{enumerate}[(i)]
				\item the standard representation on $\C^{2n}$, and
				\item the two half-spin representations $\mathrm{Spin}^+$ and $\mathrm{Spin}^-$.
				\end{enumerate}
       \item[$E$:] \begin{enumerate}[(i)]
				\item the two $27$-dimensional representations of $E_6$,  and
				\item the $56$-dimensional representation of $E_7$.
				\end{enumerate}
				\item[$G$:] \begin{enumerate}[(i)]
				\item the $7$-dimensional representation of $G_2$.
				\end{enumerate}
    \end{enumerate}
		There are no non-trivial weight multiplicity-free representations for $E_8$ and $F_4$.
\end{theorem}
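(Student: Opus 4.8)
The plan is to determine directly which dominant weights $\lambda=\sum_i c_i\omega_i$ give a weight multiplicity-free (WMF) irreducible $V_\lambda$, by first reducing to the case where $\lambda$ is supported on a single node of the Dynkin diagram and then identifying the admissible multiples. Two elementary observations drive the reduction. First, $V_\lambda$ is WMF if and only if $\dim V_\lambda$ equals the number of its weights; equivalently (by monotonicity of weight multiplicities along the dominance order), the unique minimal dominant weight of $V_\lambda$ has multiplicity one. Second, WMF passes to Levi restrictions: if $\mathfrak l_S\subset\mathfrak g$ is the Levi attached to a subset $S$ of simple roots, then every $[\mathfrak l_S,\mathfrak l_S]$-constituent of $V_\lambda|_{\mathfrak l_S}$ is WMF, since its weight spaces sit inside those of $V_\lambda$; in particular the constituent of highest weight $\sum_{i\in S}c_i\omega_i$ is WMF. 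So for every connected $S$ the irreducible representation of the (usually simple) algebra $[\mathfrak l_S,\mathfrak l_S]$ with highest weight $\sum_{i\in S}c_i\omega_i$ is WMF.

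I would first settle the rank-$\le 2$ base cases by explicit weight computations (Freudenthal's formula suffices): every irreducible of $A_1$ is WMF; for $A_2$ the WMF irreducibles are exactly $\mathrm{Sym}^c(\C^3)$ and its dual (since $V_{a\omega_1+b\omega_2}$ has a weight of multiplicity $\min(a,b)+1$); for $B_2=C_2$ only the two fundamental representations ($\dim 4$ and $5$) are WMF; for $G_2$ only the $7$-dimensional one. Feeding the $|S|=2$ Levis into the criterion then forces: no two adjacent nodes lie in the support of $\lambda$, and a support node adjacent to a multiple bond has coefficient $\le 1$ (and lies on the short side, for $G_2$). To eliminate the last possibility — two non-adjacent support nodes $i,j$ — I would restrict to the type-$A$ Levi $\mathfrak l$ on the geodesic from $i$ to $j$ and show that the weight $\lambda-\gamma$, with $\gamma$ the sum of the simple roots on that path (that is, the highest root of $\mathfrak l$), has multiplicity $\ge 2$ already inside the relevant constituent: one produces two independent highest-weight vectors $v_\lambda$ and $f_k v_\lambda$ (with $k$ the first interior node) for the subalgebra $\mathfrak{sl}_2^{(i)}\times\mathfrak{sl}_2^{(j)}$, using $[e_a,f_b]=0$ for distinct simple roots. (Alternatively, a Weyl-dimension-versus-weight-count estimate does the job.)

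It remains to pin down, for $\lambda=c\omega_i$, which pairs $(i,c)$ survive. A direct check shows $V_{c\omega_i}$ fails to be WMF for $c\ge 2$ unless $i$ is an endpoint node of a type-$A$ diagram (for an interior $A_n$-node, restrict to the central $A_3$-Levi and use that $V_{c\omega_2}(\mathfrak{sl}_4)$ is not WMF for $c\ge2$), so apart from the symmetric powers $\mathrm{Sym}^c(\C^{n+1})$ of $A_n$ and their duals — WMF because their weights are the degree-$c$ monomials, each occurring once — we may take $\lambda$ a fundamental weight. The minuscule ones (all $\omega_i$ of $A_n$; $\omega_n$ of $B_n$; $\omega_1,\omega_{n-1},\omega_n$ of $D_n$; $\omega_1$ of $C_n$; $\omega_1,\omega_6$ of $E_6$; $\omega_7$ of $E_7$) are automatically WMF, $W$ acting transitively on their weights; the quasi-minuscule survivors $\C^{2n+1}$ of $B_n$ and the $7$-dimensional representation of $G_2$ (a root orbit together with the zero weight, all of multiplicity one) are WMF, and so are the two exceptional small symplectic representations $\bigwedge^2_{\mathrm{prim}}\C^4$ and $\bigwedge^3_{\mathrm{prim}}\C^6$ by a short weight count. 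Every other fundamental weight is killed by producing a weight of multiplicity $\ge 2$: for the non-minuscule exterior powers $\bigwedge^j\C^N$ ($B_n,C_n,D_n$) the zero weight has multiplicity a binomial coefficient $\ge 2$ (with a suitable nonzero weight used when $j$ is odd and the primitive part carries no zero weight), and for $E_6,E_7,E_8,F_4$ the remaining fundamentals (adjoints, $\omega_4$ of $F_4$, etc.) and the representations $V_{c\omega_i}$ with $c\ge2$ have zero-weight multiplicity $\ge 2$ by the standard fundamental-representation dimension data. Collecting the surviving cases reproduces Howe's lists.

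The main obstacle is the single-node reduction in the non-adjacent case: unlike the adjacent case it is not detected by any rank-$2$ Levi and genuinely requires the explicit multiplicity-$\ge 2$ argument; everything else is a long but routine case analysis, which for the exceptional types rests on known dimensions of fundamental representations rather than on ab-initio computation.
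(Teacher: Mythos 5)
The paper offers no argument for this statement: it is imported verbatim from Howe \cite{How92}, so there is no internal proof to compare yours against. On its own terms, your outline follows the standard route to such classifications (Levi restriction to rank two, reduction to a single support node, then a case check of $c\omega_i$ using minuscule/quasi-minuscule weights and zero-weight multiplicities), and the final lists you arrive at agree with Howe's.

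There is, however, a genuine error at precisely the step you flag as the crux. To exclude two non-adjacent support nodes $i,j$ you propose the pair of vectors $v_\lambda$ and $f_k v_\lambda$, with $k$ the first interior node of the geodesic from $i$ to $j$. But by your own adjacency exclusion every interior node $k$ of that geodesic satisfies $\langle\lambda,\alpha_k^\vee\rangle=0$, hence $f_kv_\lambda=0$; likewise $f_\beta v_\lambda=0$ for $\beta$ the sum of the interior simple roots, since $s_\beta$ fixes $\lambda$ and would otherwise send the weight $\lambda-\beta$ to $\lambda+\beta$. So the second vector vanishes, and in any case it is not explained how two highest-weight vectors for $\mathfrak{sl}_2^{(i)}\times\mathfrak{sl}_2^{(j)}$ would force multiplicity $\geq 2$ at the weight $\lambda-\gamma$, which lies below $\lambda$ in the interior directions as well. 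The underlying claim $\dim V_\lambda[\lambda-\gamma]\geq 2$ is true and can be established (e.g.\ by Freudenthal's formula, or by exhibiting two independent vectors such as $f_{\alpha_j}f_{\gamma-\alpha_j}v_\lambda$ and $f_{\alpha_i}f_{\gamma-\alpha_i}v_\lambda$ and separating them with $e_{\alpha_i},e_{\alpha_j}$), but the argument as written does not establish it. Two smaller points: the geodesic from $i$ to $j$ may cross a multiple bond (support $\{a,n\}$ in $B_n$ or $C_n$, several pairs in $F_4$), in which case there is no ``type-$A$ Levi on the geodesic'' and $\gamma$ is not the highest root of a type-$A$ subsystem, so these cases need their own (easy, but currently unaddressed) treatment; and several exclusions you invoke, such as $2\omega_1$ of $B_n$ or of $C_n$ for $n\geq 3$, are invisible to every proper Levi and genuinely require the direct zero-weight computation you only allude to. With these repairs the proof goes through.
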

 
\subsubsection{}
We begin by classifying the hypercubic representations for $A_3$. 
The weight lattice of $A_3$ is generated by the weights of the standard representation $\C^4$,
which are four vectors $f_1,f_2,f_3,f_4\in \R^3$ (Euclidean space) 
satisfying $f_1+f_2+f_3+f_4=0$. They can be seen as four vertices of the cube $[-1/2,1/2]^3\subset\R^3$. (Figure 5).

\begin{figure}[h]
\ctikzfig{fig4.1}
\caption{$f_1,f_2,f_3,f_4$ in $[-1/2,1/2]^3$}
\end{figure}

\begin{lemma}\label{lem_lattice line lemma}
    Suppose $\Xi=Z_{d_1}\times Z_{d_2}\times\cdots\times Z_{d_n}\subset \R^n$ is rectangular and $L_1, \ldots,L_r \subset \R^n$ 
		are $r$ affine lines (not necessarily passing through the origin) that satisfy the conditions below.
    \begin{enumerate}[(a)]
        \item $\min_{1\leq i\leq r} |L_i \cap \Xi|=m$,
        \item For any $1\leq j\leq n$, there exists $k \in\{1,2,\ldots,r\}$ such that $\pi_j(L_k)=\mathbb R$, where $\pi_j:\R^n\to\R$ denotes projection to the $j$th component.
    \end{enumerate} Then, $|\Xi| \geq m^n$.
\end{lemma}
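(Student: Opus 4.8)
The plan is to bound each ``side length'' $d_j+1$ of the rectangle $\Xi$ from below by $m$ and then multiply. Since $\Xi = Z_{d_1}\times\cdots\times Z_{d_n}$ is presented in the standard rectangular coordinates of $\R^n$, we have $\pi_j(\Xi) = Z_{d_j}$, so $|\pi_j(\Xi)| = d_j+1$; and $|\Xi| = \prod_{j=1}^n(d_j+1)$. Thus it suffices to prove $d_j+1 \ge m$ for every $1\le j\le n$.

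The key elementary observation I would record first is: if $L\subset\R^n$ is an affine line with $\pi_j(L) = \R$, then $\pi_j$ restricts to an \emph{injection} on $L$. Indeed, writing $L = \{a+tv:\ t\in\R\}$ with $v\neq 0$, the hypothesis $\pi_j(L) = \R$ forces $v_j\neq 0$, and then $t\mapsto a_j+tv_j$ is injective. Consequently, for such a line,
$$|L\cap\Xi| \;=\; |\pi_j(L\cap\Xi)| \;\le\; |\pi_j(\Xi)| \;=\; |Z_{d_j}| \;=\; d_j+1.$$

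Now I would fix $j$ and invoke the two hypotheses: by (b) there is a line $L_k$ among $L_1,\dots,L_r$ with $\pi_j(L_k) = \R$, and by (a) this line satisfies $|L_k\cap\Xi|\ge m$. Feeding $L_k$ into the displayed inequality gives $d_j+1\ge |L_k\cap\Xi|\ge m$. As $j$ was arbitrary, every side of the rectangle has length at least $m$, so $|\Xi| = \prod_{j=1}^n(d_j+1)\ge m^n$, as desired.

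There is no serious obstacle here; the argument is a short pigeonhole-type estimate. The only point needing a little care is the \emph{direction} of the inequality in the injectivity step: it is the cardinality of $L\cap\Xi$ that is bounded \emph{above} by the relevant side length $d_j+1$, which is exactly what converts the lower bound on line intersections from (a) into a lower bound on $d_j+1$.
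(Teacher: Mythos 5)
Your proof is correct and is essentially the same as the paper's: for each coordinate $j$ you pick a line $L_k$ with $\pi_j(L_k)=\R$, use injectivity of $\pi_j$ on that line to get $d_j+1=|\pi_j(\Xi)|\geq|\pi_j(L_k\cap\Xi)|=|L_k\cap\Xi|\geq m$, and multiply over $j$. The only difference is that you spell out the injectivity step that the paper leaves implicit.
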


\begin{proof}
   One has from $\pi_{1}(L_{i_{1}}), \ldots, \pi_{n}(L_{i_n}) =\mathbb R$ for some indices $1 \leq i_{1}, \ldots, i_n \leq r$ that
    \begin{align*}
        |Z_{d_j}|=|\pi_j(L_{i_j}) \cap \pi_j(\Xi)| \geq |\pi_j(L_{i_j} \cap \Xi)|=|L_{i_j} \cap \Xi| \geq m, \quad 1 \leq j \leq n,
    \end{align*}
    and consequently $|\Xi|\geq \prod_{1\leq j\leq n} |Z_{d_j}|\geq m^n$.
\end{proof}

\begin{prop}\label{lem_rectangular A_3}
    The only faithful hypercubic representation $V$ of $A_3$ is $\mathrm{Std} \oplus \mathrm{Std}^{\vee}$.
\end{prop}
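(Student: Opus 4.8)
The plan is to exploit the rigidity of the weight lattice of $A_3$ together with the classification in Theorem~\ref{thm_WMF} and the combinatorial constraint provided by Lemma~\ref{lem_lattice line lemma}. Since $\psi$ is hypercubic, it is weight multiplicity-free, so every irreducible constituent is one of the representations listed in Theorem~\ref{thm_WMF} for $A_n$ with $n=3$, namely $\UOne$, $\mathrm{Std}=\C^4$, $\mathrm{Std}^\vee$, $\bigwedge^2\C^4$, $\bigwedge^3\C^4=\mathrm{Std}^\vee$, and the symmetric powers $\mathrm{Sym}^m(\C^4)$, $\mathrm{Sym}^m(\C^4)^\vee$ for $m\geq 2$. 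Faithfulness forces at least one nontrivial constituent, and the formal character $\Xi$ must be hypercubic of some length $d+1\geq 2$, hence isomorphic to $Z_d^3\subset\R^3$, so $|\Xi|=(d+1)^3$ and $\Xi$ spans a $3$-space. The first reduction I would carry out is that no constituent can be a symmetric power $\mathrm{Sym}^m$ with $m\geq 2$ nor $\bigwedge^2\C^4$: this can be seen by examining the weights of such a representation (e.g. $\mathrm{Sym}^m(\C^4)$ has weights $\{a_1f_1+a_2f_2+a_3f_3+a_4f_4 : a_i\geq 0,\ \sum a_i=m\}$, which for $m\geq 2$ already contains collinear triples forcing a line through $\Xi$ of cardinality $\geq 3$ in a ``wrong'' direction), and then Lemma~\ref{lem_lattice line lemma}, applied to lines parallel to the three coordinate axes of the rectangular structure, would force $|\Xi|$ too large relative to the rectangular shape — or, more directly, one checks these weight sets are not subsets of any rectangular $Z_d^3$ because rectangularity of $\Xi$ means all of $\Xi$ sits inside a single rectangular box and the $\mathrm{Sym}^m$ weight polytope is a simplex, not contained in such a box once combined with the multiplicity-one and span conditions.

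With symmetric powers and $\bigwedge^2$ eliminated, every nontrivial constituent is among $\mathrm{Std}$ and $\mathrm{Std}^\vee$, each of dimension $4$ with weights the four vertices $\{f_1,f_2,f_3,f_4\}$ (resp. their negatives $\{-f_1,-f_2,-f_3,-f_4\}$) of the cube $[-1/2,1/2]^3$; note these two weight sets are disjoint and each is a tetrahedron inscribed in the cube. So $\Xi$ is, as a multiset, a disjoint union of copies of $\{f_i\}$, copies of $\{-f_i\}$, and copies of the zero weight (from trivial constituents). Since $\Xi$ must be rectangular of the form $Z_d^3$ with $|\Xi|=(d+1)^3$, and $|\Xi|$ is a multiple of $4$ unless there are trivial summands, I would first rule out trivial summands and $d\geq 2$: the smallest hypercubic shape with $d\geq 2$ has $27$ points, which is odd, so cannot be a union of $4$-element sets plus an odd contribution unless there are trivial constituents; a short parity/divisibility and span analysis (together with the observation that $Z_2^3$ contains collinear triples like $(-2,0,0),(0,0,0),(2,0,0)$ that the vertex sets $\{\pm f_i\}$ and zero simply cannot reproduce with multiplicity one) eliminates $d\geq 2$. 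Hence $d=1$, $|\Xi|=8$, and $\Xi$ consists of exactly two of the building blocks. The three possibilities are $\mathrm{Std}\oplus\mathrm{Std}$, $\mathrm{Std}^\vee\oplus\mathrm{Std}^\vee$, and $\mathrm{Std}\oplus\mathrm{Std}^\vee$; the first two are not faithful (they factor through $\mathrm{PGL}_4$-type quotients — more precisely, $\C^4\oplus\C^4$ has the same kernel as $\C^4$, namely trivial, wait: actually $\mathrm{Std}$ alone is already faithful as a representation of $\mathfrak{sl}_4$, so faithfulness does not distinguish them) — so the real point is rectangularity: I must check which of these three $8$-element weight multisets is isomorphic to $Z_1^3=\{\pm 1\}^3$, i.e. the vertex set of a cube.

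The decisive computation, which I expect to be the main obstacle, is the geometric check: $\{f_1,f_2,f_3,f_4\}\cup\{-f_1,-f_2,-f_3,-f_4\}$ is precisely the full set of $8$ vertices of the cube $[-1/2,1/2]^3$ (the two inscribed tetrahedra are complementary), hence after the scaling isomorphism $x\mapsto 2x$ it equals $Z_1^3$ — so $\mathrm{Std}\oplus\mathrm{Std}^\vee$ is genuinely hypercubic of length $2$. By contrast $\mathrm{Std}\oplus\mathrm{Std}$ has formal character the multiset $\{f_1,f_1,f_2,f_2,f_3,f_3,f_4,f_4\}$ with every weight of multiplicity two, so it is not even weight multiplicity-free, hence not rectangular; similarly $\mathrm{Std}^\vee\oplus\mathrm{Std}^\vee$ fails. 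This disposes of all cases and shows $\mathrm{Std}\oplus\mathrm{Std}^\vee$ is the unique faithful hypercubic representation of $A_3$, which also matches the identification $(A_3,\mathrm{Std}\oplus\mathrm{Std}^\vee)=(D_3,\mathrm{Spin})$ already recorded in Example~\ref{example}(4). The subtle part throughout is organizing the elimination of symmetric powers cleanly — I would lean on Lemma~\ref{lem_lattice line lemma} with a well-chosen family of axis-parallel lines (in the rectangular coordinates) so that condition (b) is met and condition (a) gives $m\geq 3$ whenever a $\mathrm{Sym}^{m'}$ with $m'\geq 2$ or a $\bigwedge^2$ occurs, forcing $|\Xi|\geq 3^3=27>|\Xi|$ once one also knows $|\Xi|$ is small, and iterating if necessary.
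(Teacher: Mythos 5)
Your overall strategy matches the paper's (Howe's classification via Theorem~\ref{thm_WMF}, Lemma~\ref{lem_lattice line lemma}, and the final observation that the two tetrahedra $\{f_i\}$ and $\{-f_i\}$ together form the vertex set of a cube), and your endgame once only $\mathrm{Std}$, $\mathrm{Std}^\vee$, $\mathbb 1$ remain is correct. But the step where you ``first eliminate'' $\mathrm{Sym}^m$ for $m\geq 2$ and $\bigwedge^2(\mathrm{Std})$ has a genuine gap, and it is precisely the step that carries the weight of the proof. Your two proposed arguments do not work as stated: (1) the claim that the $\mathrm{Sym}^m$ weight simplex ``is not contained in such a box'' is unsubstantiated --- being a subset of some $Z_d^3$ in \emph{some} linear coordinates is a nontrivial condition that you never actually test; and (2) the appeal to Lemma~\ref{lem_lattice line lemma} to force ``$|\Xi|$ too large'' is circular, since $|\Xi|=\dim V$ is determined by the very constituents you are trying to rule out; your phrase ``once one also knows $|\Xi|$ is small'' assumes exactly what needs to be proved. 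If, say, $\mathrm{Sym}^{10}(\mathrm{Std})$ were a constituent, $|\Xi|$ would be at least $286$ and no contradiction with ``$|\Xi|\geq 27$'' arises.

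The paper closes this gap in two stages that you are missing. First, since $\Xi=Z_d^3$ is symmetric under $w\mapsto -w$, the representation is self-dual; combining this with weight multiplicity-freeness and a computation of each constituent's weights modulo the root lattice (the classes $0$, $f_1$, $f_1+f_2$, $f_1+f_2+f_3$) kills $\mathrm{Sym}^{4k}$ ($k>0$) and $\mathrm{Sym}^{4k+2}$ outright and pins $V$ down to $\mathrm{Sym}^m(\mathrm{Std})\oplus\mathrm{Sym}^m(\mathrm{Std}^\vee)\oplus U$ with $\dim U\in\{0,1,6,7\}$ (or the two $\bigwedge^2$ cases, excluded by dimension). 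Only \emph{then} does Lemma~\ref{lem_lattice line lemma}, applied to the three non-coplanar lines through $mf_4$ in directions $f_i-f_4$, give $(m+1)^3\leq |\Xi|=\frac{(m+1)(m+2)(m+3)}{3}+\dim U$, which forces $m=1$ because the left side grows like $m^3$ while the right grows like $m^3/3$. Without first bounding $\dim U$ and fixing the shape of $V$, this comparison --- the actual mechanism that excludes large symmetric powers --- cannot be run. Your proof needs this intermediate structural analysis inserted before the elimination step; as written, the elimination is asserted rather than proved.
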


\begin{proof}
Let $\Xi\subset\R^3$ be the formal character of $V$ that is hypercubic of length $\ell\geq 2$.
    By Theorem~\ref{thm_WMF}, if $V$ is hypercubic, then each irreducible factor $W$ of $V$ is of multiplicity one and is equal to $\mathrm{Sym}^m(\mathrm{Std})$, or $\mathrm{Sym}^m(\mathrm{Std}^{\vee})$, or $\bigwedge^2(\mathrm{Std})$, or the trivial representation $\mathbb 1$. These irreducible representations can be classified as follows: Let $k$ be a non-negative integer.
    \begin{enumerate}
        \item $\mathrm{Sym}^{4k}(\mathrm{Std})$ and $\mathrm{Sym}^{4k}(\mathrm{Std}^{\vee})$ contain the weight $0$, 
				as $0=k(f_1+f_2+f_3+f_4)$.
        \item $\mathrm{Sym}^{4k+1}(\mathrm{Std})$ and $\mathrm{Sym}^{4k+3}(\mathrm{Std}^{\vee})$ contain the weight $w_1:=f_1$, as $f_1=f_1+k(f_1+f_2+f_3+f_4)=-f_2-f_3-f_4-k(f_1+f_2+f_3+f_4)$.
        \item $\mathrm{Sym}^{4k+2}(\mathrm{Std})$, $\mathrm{Sym}^{4k+2}(\mathrm{Std}^{\vee})$ and $\bigwedge^2(\mathrm{Std})$ contain weight $w_2:=f_1+f_2$, as $f_1+f_2=f_1+f_2+k(f_1+f_2+f_3+f_4)=-f_3-f_4-k(f_1+f_2+f_3+f_4)$.
        \item $\mathrm{Sym}^{4k+3}(\mathrm{Std})$ and $\mathrm{Sym}^{4k+1}(\mathrm{Std}^{\vee})$ contain the weight $w_3:=f_1+f_2+f_3$, as $f_1+f_2+f_3=f_1+f_2+f_3+k(f_1+f_2+f_3+f_4)=-f_4-k(f_1+f_2+f_3+f_4)$.
    \end{enumerate}
		
		 Since $V$ is hypercubic, $w\in\Xi$ if and only if $-w\in \Xi$ and thus $V$ is self-dual. If $W$ is an irreducible factor of $V$ and $W$ is not self-dual,
         then $W\oplus W^\vee$ is a factor of $V$.
		Hence, if $\mathrm{Sym}^{4k+2}(\mathrm{Std})$ (or $\mathrm{Sym}^{4k+2}(\mathrm{Std}^{\vee})$)
		is an irreducible factor of $V$ then the weight $w_2$ above has multiplicity at least $2$  in $\Xi$,
		which is absurd.
		Similarly, $\mathrm{Sym}^{4k}(\mathrm{Std})$ (or $\mathrm{Sym}^{4k}(\mathrm{Std}^{\vee})$) for $k>0$
		cannot be an irreducible factor of $V$.
		By the facts that $V$ is self-dual and weight multiplicity-free, and the above classification,
 $V$ is one of the following cases, where $m\in\N$ is congruent to $1$ or $3$ modulo $4$.
    \begin{enumerate}
		\item $V=\mathrm{Sym}^{m}(\mathrm{Std}) \oplus \mathrm{Sym}^{m}(\mathrm{Std}^{\vee}) \oplus \bigwedge^2(\mathrm{Std})\oplus \mathbb 1$.
        \item $V=\mathrm{Sym}^{m}(\mathrm{Std}) \oplus \mathrm{Sym}^{m}(\mathrm{Std}^{\vee})\oplus \bigwedge^2(\mathrm{Std})$.
        \item $V=\mathrm{Sym}^{m}(\mathrm{Std}) \oplus \mathrm{Sym}^{m}(\mathrm{Std}^{\vee})\oplus \mathbb 1$.
        \item $V=\mathrm{Sym}^{m}(\mathrm{Std}) \oplus \mathrm{Sym}^{m}(\mathrm{Std}^{\vee})$.
        \item $V=\bigwedge^2(\mathrm{Std})\oplus \mathbb 1$.
        \item $V=\bigwedge^2(\mathrm{Std})$.
    \end{enumerate}
    Among these types of representations, the last two representations  $\bigwedge^2(\mathrm{Std})\oplus \mathbb 1$ and $\bigwedge^2(\mathrm{Std})$ are $7$- and $6$-dimensional, respectively, and thus are not hypercubic.
    
    Suppose that $V=\mathrm{Sym}^{m}(\mathrm{Std}) \oplus \mathrm{Sym}^{m}(\mathrm{Std}^{\vee}) \oplus U$ 
		is one of the first four cases. For $i=1,2,3$, let $L_i\subset\R^3$ be the affine line containing
    \begin{align}\label{pts}
        \{mf_4,(m-1)f_4+f_i,\ldots,f_4+(m-1)f_i,mf_i\}.
    \end{align}
		These are some weights coming from $\mathrm{Sym}^{m}(\mathrm{Std})$. Since \eqref{pts} is a subset of $\Xi$, we obtain 
		\begin{equation}\label{estimate}
		|L_i\cap \Xi|\geq (m+1)\hspace{.1in} \text{for} \hspace{.1in}i=1,2,3.
		\end{equation}
		Suppose $\Xi=Z_{d_1}\times Z_{d_2} \times Z_{d_3}\subset\R^3$ up to a change of coordinates.
		Since the three lines $L_1,L_2,L_3$ all pass through $mf_4$ and are not coplanar in $\R^3$, 
		the condition \ref{lem_lattice line lemma}(b) is satisfied and 
		we obtain $|\Xi|\geq (m+1)^3$ by \eqref{estimate} and the lemma.
	Hence, we obtain
    \begin{align*}
		\begin{split}
        \dim V=2\binom{m+3}{3}+\dim U=\frac{1}{3}(m+1)(m+2)(m+3)+\dim U\geq (m+1)^3,
				\end{split}
    \end{align*}
		equivalent to
		$$\frac{(m+1)(2m+3)(m-1)}{3}\leq \dim U.$$
		As $\dim U\in\{0,1,6,7\}$ and $m\in\N$ is congruent $1$ or $3$ modulo $4$,
		the only solution is $m=1$ which 
		implies $\dim V\in\{8,9,14,15\}$.
		Since $V$ is a faithful hypercubic representation of $A_3$, the dimension is given by $\dim V=\ell^3$ for some positive integer $\ell \geq 2$
		and the only possibility is $8=2^3$. 
In this case, $V=\mathrm{Std} \oplus \mathrm{Std}^{\vee}$ and the  formal character is given by
		$$\Xi=\{\pm f_1,\pm f_2, \pm f_3, \pm f_4\}=\{(\pm 1/2,\pm 1/2,\pm 1/2)\} \subset \mathbb R^3,$$
    which is hypercubic.
\end{proof}

\subsubsection{}
As $D_3$ and $A_3$ are isomorphic, it remains to classify faithful hypercubic representations $V$ of 
   $B_n$ $(n \geq 2)$, $C_n$ $(n \geq 3)$, and $D_n$ $(n \geq 4)$.
	
 Firstly, consider $B_n$ for $n\geq 2$. 
		Since $V$ is weight multiplicity-free, it follows from Theorem~\ref{thm_WMF}
		that 
		$$V=\delta_1\mathbb 1  \oplus \delta_2 \mathrm{Std}  \oplus \delta_3\mathrm{Spin}$$ 
		for $\delta_1,\delta_2,\delta_3 \in \{0,1\}$. If $n>2$, then we have
    \begin{align*}
        \dim(V) \leq \dim(\mathbb 1)+\dim(\mathrm{Std})+\dim(\mathrm{Spin})=2+2n+2^n<3^n,
    \end{align*}
    so $\dim(V)=2^n$. Consequently, either $V$ is $\mathrm{Spin}$, or $n=3$ and $V=\mathrm{Std}\oplus \mathbb 1$. But the latter one is not hypercubic (and in fact not even weight multiplicity-free), so $V=\mathrm{Spin}$ is the unique hypercubic representation in this case. If $n=2$, then there is an exceptional hypercubic representation $\mathrm{Std}\oplus \mathrm{Spin}$ (see Example \ref{example}), and one can readily derive that this exceptional representation and the spin representation are the only hypercubic representations of $B_2$.

    Secondly, consider $C_n$ for $n\geq 3$. If $n>3$, then 
		$V$ is either $\mathrm{Std}$ or $\mathrm{Std}\oplus\mathbb 1$ by Theorem~\ref{thm_WMF}, since the weights of $V$ are multiplicity-free. But both cases are not hypercubic.
When $n=3$, we obtain similarly that  
$$V=\delta_1\mathbb 1\oplus \delta_2\mathrm{Std} \oplus   \delta_3 U$$ 
for $\delta_1,\delta_2,\delta_3 \in \{0,1\}$, where $U=\bigwedge_{\mathrm{prim}}^3(\C^6)$ in Theorem \ref{thm_WMF}. 
As $\dim \mathbb 1 =1$, $\dim \mathrm{Std}=6$, and $\dim U=14$, it is impossible that $\dim V=\ell^3$ for any integer $\ell \geq 2$. Therefore, there is no hypercubic representations of $C_n$ for $n\geq 3$.

    Lastly, consider $D_n$ for $n \geq 4$. As above, one obtains $V=\delta_1\mathbb 1\oplus \delta_2\mathrm{Std}\oplus \delta_3\mathrm{Spin}^+\oplus \delta_4\mathrm{Spin}^-$ for $\delta_1,\delta_2,\delta_3,\delta_4 \in \{0,1\}$ and thus
    \begin{align*}
        \dim V \leq \dim(\mathbb 1)+\dim(\mathrm{Std})+\dim(\mathrm{Spin}^+)+\dim(\mathrm{Spin}^-)
				=1+2n+2^n<3^n.
    \end{align*}
  Therefore, we have $\dim(V)=2^n$. If $n \geq 5$, we have $(\delta_1,\delta_2,\delta_3,\delta_4)=(0,0,1,1)$ and hence $V=\mathrm{Spin}$ is the unique hypercubic representation of $D_n$. If $n=4$, we obtain three solutions
  \begin{align*}
      (\delta_1,\delta_2,\delta_3,\delta_4)=(0,1,1,0),(0,1,0,1),(0,0,1,1)
  \end{align*}
  which correspond to $\mathrm{Std}\oplus \mathrm{Spin}^+$, $\mathrm{Std}\oplus \mathrm{Spin}^-$, and 
  $\mathrm{Spin}$, respectively.

\subsection{Proofs of Theorem \ref{main1}(iv),(v)}\label{s2.9}
We handle the uniqueness assertions in Theorem \ref{main1}.
By the external tensor product \eqref{uni1},
the restriction $\psi|_{\Lieg_i}$ is some multiple of $\psi_i$.
It follows from 
 the classification in Theorem \ref{main1}(iii)(a)--(e) that $\psi_i$ is determined by $\psi|_{\Lieg_i}$.
Therefore, we obtain Theorem \ref{main1}(iv).
For Theorem \ref{main1}(v), suppose 
\begin{equation}\label{uni3}
(\Lieg_1,\psi_1)=\bigotimes_{j'=1}^{s'} (\Lieg_{1,j'},\psi_{1,j'})
\end{equation}
is another external tensor product of indecomposable faithful hypercubic representations,
where $\Lieg_1=\prod_{j'=1}^{s'} \Lieg_{1,j'}$ is some decomposition.
We now compare \eqref{uni2} with \eqref{uni3}.

\begin{lemma}\label{coincide}
The two sets of factors 
$F_1=\{\Lieg_{1,j}:~ 1\leq j\leq s\}$
and $F_1'=\{\Lieg_{1,j'}:~ 1\leq j'\leq s'\}$ of $\Lieg_1$ coincide.
\end{lemma}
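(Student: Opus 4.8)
The plan is to prove that each of the two tensor decompositions \eqref{uni2} and \eqref{uni3} is governed by a partition of the set of $A_1$-factors of $\Lieg_1$ that is intrinsically determined by the formal character $\Xi_1$ of $\psi_1$; then $F_1=F_1'$ is immediate. Write $\Lieg_1=\mathfrak{a}_1\times\cdots\times\mathfrak{a}_N$ for the decomposition into coordinate $A_1$-factors, so $\Lambda_{\Lieg_1}=\bigoplus_{k=1}^N\Lambda_{\mathfrak{a}_k}$ with each $\Lambda_{\mathfrak{a}_k}\cong\Z$ spanned by the fundamental weight (canonical up to sign); for $U\subseteq\{1,\dots,N\}$ write $\pi_U$ for the projection onto $\bigoplus_{k\in U}\Lambda_{\mathfrak{a}_k}$ and $\pi_k:=\pi_{\{k\}}$. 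By Proposition~\ref{extract}(iii), each factor $\Lieg_{1,j}$ of \eqref{uni2} is either a single $\mathfrak{a}_k$ (with $\psi_{1,j}$ of type (a) or (b) of Theorem~\ref{main1}(ii)) or a pair $\mathfrak{a}_k\times\mathfrak{a}_l$ (with $\psi_{1,j}$ of type (c)); likewise for \eqref{uni3}. Thus \eqref{uni2} and \eqref{uni3} give partitions $P,P'$ of $\{1,\dots,N\}$ into blocks of size $1$ and $2$, with $F_1=\{\prod_{k\in B}\mathfrak{a}_k:B\in P\}$ and $F_1'=\{\prod_{k\in B}\mathfrak{a}_k:B\in P'\}$, so it suffices to show $P=P'$.

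The key is that the size-$2$ blocks are visible in $\Xi_1$. Since the formal character of an external tensor product is the Cartesian product of the formal characters of its factors, one has $\Xi_1=\prod_{B\in P}\pi_B(\Xi_1)$, where $\pi_B(\Xi_1)$ is the formal character of the factor of \eqref{uni2} supported on $B$. In particular, a size-$2$ block $\{k,l\}$ is of type (c), so $\pi_{\{k,l\}}(\Xi_1)$ equals the ``diamond'' $D_{k,l}:=\{(\pm1,0),(0,\pm1)\}\subset\Lambda_{\mathfrak{a}_k}\oplus\Lambda_{\mathfrak{a}_l}$, the character of $(\mathrm{Std}\otimes\mathbb 1)\oplus(\mathbb 1\otimes\mathrm{Std})$. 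I will use two elementary observations: (i) if $\Xi=A\times C$ with $A\subseteq\bigoplus_{k\in S}\Lambda_{\mathfrak{a}_k}$ and $C\subseteq\bigoplus_{k\notin S}\Lambda_{\mathfrak{a}_k}$, then $\pi_{\{k,m\}}(\Xi)=\pi_k(\Xi)\times\pi_m(\Xi)$ for $k\in S$, $m\notin S$, again a Cartesian product; (ii) the diamond $D_{k,l}$ is \emph{not} a Cartesian product of two subsets of $\Z$, since $|\pi_k(D_{k,l})|=|\pi_l(D_{k,l})|=3$ while $|D_{k,l}|=4$.

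From (i) and (ii) I claim that, for distinct $k,l$, the pair $\{k,l\}$ is a block of $P$ if and only if $\pi_{\{k,l\}}(\Xi_1)=D_{k,l}$. The forward implication is the computation of $\pi_B(\Xi_1)$ above. Conversely, $k$ lies in a unique block $B$ of $P$; regrouping \eqref{uni2} as $\Xi_1=\pi_B(\Xi_1)\times\prod_{B'\ne B}\pi_{B'}(\Xi_1)$ and noting $l$ lies in some $B'\ne B$ whenever $l\notin B$, observation (i) exhibits $\pi_{\{k,l\}}(\Xi_1)$ as a Cartesian product when $B=\{k\}$ or $B=\{k,m\}$ with $m\ne l$, contradicting (ii); hence $B=\{k,l\}$. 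Since this criterion mentions only $\Xi_1$, the size-$2$ blocks of $P$ and of $P'$ agree, and the size-$1$ blocks are forced to be the remaining singletons; thus $P=P'$ and $F_1=F_1'$. The main thing to get right is isolating this intrinsic criterion and keeping the argument non-circular — the converse implication uses the already-established forward implication together with the structural input of Proposition~\ref{extract}(iii) — while the computations themselves are routine, and the sign ambiguity in $\Lambda_{\mathfrak{a}_k}\cong\Z$ is harmless because $D_{k,l}$ is invariant under sign changes.
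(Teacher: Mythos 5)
Your proof is correct, and it takes a genuinely different route from the paper's. The paper works with invariant subspaces: for each factor $\Lieg_{1,j}$ it computes the dimension of the maximal subspace $W_{1,j}\subseteq V_1$ on which $\Lieg_{1,j}$ acts trivially, noting that a factor of type (a) or (b) in Theorem \ref{main1}(ii) gives $\dim W_{1,j}<(\dim V_1)/2$ whereas a simple $A_1$-ideal sitting inside a type (c) factor fixes exactly half of $V_1$; this first separates the simple from the non-simple factors of $F_1$ and $F_1'$, and a second count ($\dim W_{1,j}=0$ for a type (c) factor) then forces the pairings to agree. You instead read the partition directly off the formal character: a pair $\{k,l\}$ of simple $A_1$-ideals is a type (c) block if and only if $\pi_{\{k,l\}}(\Xi_1)$ is the diamond $\{(\pm1,0),(0,\pm1)\}$, which is never a Cartesian product, while the projection onto any non-block pair necessarily is one. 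Both arguments rest on the same structural input --- the classification of indecomposable hypercubic factors in Theorem \ref{main1}(ii), applied to \emph{both} decompositions \eqref{uni2} and \eqref{uni3} --- but yours isolates an explicitly intrinsic, purely combinatorial criterion on the weight multiset, which makes the independence from the chosen decomposition transparent, whereas the paper's is slightly shorter and stays in the language of the representation itself. One small point of hygiene: $\pi_{\{k,l\}}(\Xi_1)$ should be read as the underlying \emph{set} of the projected multiset (projections introduce multiplicities even though $\Xi_1$ is multiplicity-free), but since both the diamond identity and the Cartesian-product obstruction are statements about sets, this does not affect the argument.
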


\begin{proof}
Let $V_1$ be the ambient space of $\psi_1$ and $W_{1,j}$ be the maximal subspace of $V_1$ such that $\Lieg_{1,j}$ acts trivially.
Here, $\Lieg_{1,j}$ is either $A_1$ or $A_1\times A_1$ by Theorem \ref{main1}(ii).

Suppose first $\Lieg_{1,j}=A_1$. 
By the tensor product decomposition \eqref{uni2} and Theorem \ref{main1}(ii)(a),(b), we obtain
$\dim W_{1,j}<(\dim V_1)/2.$
If $\Lieg_{1,j}$ is a factor of some $\Lieg_{1,j'}=A_1\times A_1$,
then $\dim W_{1,j}=(\dim V_1)/2$ by \eqref{uni3} and Theorem \ref{main1}(ii)(c), which is absurd.
Hence, the simple factors in $F_1$ and $F_1'$ coincide.

Next, we suppose $\Lieg_{1,j}=A_1\times A_1$. In this case, we have $\dim W_{1,j}=0$ by the tensor product decomposition \eqref{uni2} and Theorem \ref{main1}(ii)(c).
If $\Lieg_{1,j}\notin F_1'$, then there exist two distinct non-simple factors $\Lieg_{1,j'}$ and $\Lieg_{1,j''}$
in $F_1'$ that intersect $\Lieg_{1,j}$ non-trivially, since the simple factors in $F_1$ and $F_1'$ coincide.
This implies that
$\dim W_{1,j}\neq 0$ by \eqref{uni3} and Theorem \ref{main1}(ii)(c), which is absurd.
Therefore, we conclude that $F_1$ and $F_1'$ coincide.
\end{proof}

Lemma \ref{coincide} induces 
a bijective correspondence $j\leftrightarrow j'$ so that 
$\Lieg_{1,j}=\Lieg_{1,j'}$ in $\Lieg_1$. Then Theorem \ref{main1}(ii) implies that
$$\text{some multiple of }\psi_{1,j}=\psi_1|_{\Lieg_{1,j}}=\psi_1|_{\Lieg_{1,j'}}=
\text{some multiple of }\psi_{1,j'}$$
and we obtain $\psi_{1,j}=\psi_{1,j'}$ from the classification in Theorem \ref{main1}(ii)(a)--(c).

\section{$\lambda$-independence of algebraic monodromy groups}

\subsection{Notation and terminology}\label{s3.1}
We collect the notation and terminology that will be used frequently.
\begin{itemize}
    \item $K$ (resp. $E$): a number field.
		\item $\Sigma_K$ (resp. $\Sigma_E$): the set of finite places of $K$ (resp. $E$).
    \item $v$ (resp. $\lambda$): an element of $\Sigma_K$ (resp. $\Sigma_E$).
    \item $p$ (resp. $\ell$): the residue characteristic of $v$ (resp. $\lambda$).
		\item $S_\ell$: the set of places in $\Sigma_K$ that divide the rational prime $\ell$.
		\item $\overline K$: an algebraic closure of $K$.
		\item $E_{\lambda}$ and $\F_\lambda$: the $\lambda$-adic completion of $E$ and the residue field of $E_\lambda$.
    \item $\overline E_{\lambda}$ (resp. $\overline \F_\lambda$): an algebraic closure of $E_{\lambda}$ (resp. $\F_\lambda$).
    \item $\Gal_K$: the absolute Galois group $\Gal(\overline K/K)$ equipped with the profinite topology.
		\item $\rho_\lambda:\Gal_K\to\GL_n(\overline E_\lambda)$: a \emph{$\lambda$-adic representation} of $K$, i.e., a continuous group homomorphism.
		\item $\bar\rho_\lambda^{\ss}:\Gal_K\to\GL_n(\overline \F_\lambda)$: the \emph{residue representation} of $\rho_\lambda$, i.e., 
		the semisimplification of the reduction modulo $\lambda$ of $\rho_\lambda$.
		\item $\bG_\lambda$ or $\bG_{\rho_\lambda}$: the \emph{algebraic monodromy group} of $\rho_\lambda$, i.e., the Zariski closure 
		of the Galois image $\rho_\lambda(\Gal_K)$ in $\GL_{n,\overline E_\lambda}$.
		\item $\iota_\lambda: \overline E_\lambda\to \C$: a field isomorphism for each $\lambda\in\Sigma_E$.
		\item $\bG_{\lambda,\C}$ (resp. $\bG_{\rho_\lambda,\C}$): the base change $\bG_\lambda\times_{\iota_\lambda}\C$ 
		(resp. $\bG_{\rho_\lambda}\times_{\iota_\lambda}\C$).
		\item $\bG^\circ$: the identity component of a linear algebraic group $\bG$.
		\item $\bG^{\der}$: the derived group $[\bG^\circ,\bG^\circ]$ of the identity component of $\bG$.
		\item $\epsilon_\ell$ (resp. $\bar\epsilon_\ell$): the $\ell$-adic (resp. mod $\ell$) cyclotomic character.
		\item A semisimple $\lambda$-adic representation $\sigma_\lambda$ is \emph{of type $A$} (resp. \emph{of type $A_1$}) 
if the semisimple part
$\mathrm{Lie}(\bG_{\sigma_\lambda,\C})^{\ss}$ has only type $A$ (resp. type $A_1$) factors.
This includes the case where the semisimple part is zero.
    \item $\mathbb{G}_m$ and $\mathbb{G}_a$: $\mathrm{Spec}(F[x,1/x])$ and $\mathrm{Spec}(F[x])$ for some field $F$.
\end{itemize}

\subsection{Compatible system of Galois representations}\label{s3.2}
We mainly follow \cite[$\mathsection5$]{BLGGT14} and \cite[$\mathsection1$]{PT15}.

\begin{defi}\label{csdef}
    A family of ($n$-dimensional) $\lambda$-adic Galois representations indexed by $\Sigma_E$,
    \begin{align}\label{abscs}
        \{\rho_{\lambda}: \Gal_K \longrightarrow \mathrm{GL}_n(\overline E_{\lambda})\}_{\lambda\in\Sigma_E},
    \end{align}
    is called a  {\it compatible system of $K$ defined over $E$} (or \emph{$E$-rational compatible system}) 
		if there exist a finite set of places $S \subset \Sigma_K$ and a polynomial $\Phi_v(T)\in E[T]$ for each $v\in\Sigma_K\backslash S$ such that
		the following conditions hold.
    \begin{enumerate}
        \item[(a)] For each $\lambda\in\Sigma_E$, the representation $\rho_{\lambda}$ is unramified 
				at every $v\in \Sigma_K\backslash (S\cup S_\ell)$. 
        \item[(b)] For each $\lambda\in\Sigma_E$ and $v\in \Sigma_K\backslash (S\cup S_\ell)$,  
				the characteristic polynomial of $\rho_{\lambda}(\mathrm{Frob}_v)$ satisfies
        \begin{align*}\label{Frobpoly}
            \det(\rho_{\lambda}(\mathrm{Frob}_v)-T\cdot \mathrm{id})=\Phi_v(T) \in E[T].
        \end{align*}
    \end{enumerate}
		The compatible system $\{\rho_\lambda\}$ is said to be \emph{semisimple} if each $\rho_\lambda$ is semisimple.
\end{defi}

\begin{remark}
When the $E$-rational 
compatible system \eqref{abscs} is semisimple, 
there is a finite extension $E'/E$ such that \eqref{abscs}
can be descended to a $\GL_n(E'_{\lambda'})$-valued 
$E$-rational compatible system $\{\rho_{\lambda'}:\Gal_K\to\GL_n(E'_{\lambda'})\}_{\lambda'\in\Sigma_{E'}}$ \cite{BH26}.
\end{remark}

\begin{defi}\label{wcsdef}
    A semisimple $E$-rational compatible system $\{\rho_{\lambda} : \mathrm{Gal}_{K} \to \mathrm{GL}_n(\overline{E}_{\lambda})\}_{\lambda\in\Sigma_E}$ 
		is called a \emph{strictly compatible system} if the following conditions are satisfied.
    \begin{enumerate}
        \item[(a)] For each $\lambda\in\Sigma_E$ and $v\in S_\ell$, 
the local representation $\rho_{\lambda}|_{\Gal_{K_v}}$ is de Rham and is further crystalline if $v \not\in S$.
        \item[(b)] For each embedding $\tau: K\hookrightarrow \overline E$, there is a multiset of $n$ integers $\mathrm{HT}_\tau$
				such that $\mathrm{HT}_{i\circ\tau}(\rho_{\lambda})=\mathrm{HT}_{\tau}$ 
				for any $\lambda$ and any $i:\overline E\hookrightarrow \overline E_\lambda$ over $E$.
        \item[(c)] For $\lambda\in\Sigma_E$ and $v\notin S_\ell$, 
				the semisimplified Weil-Deligne representation $\iota \mathrm{WD}(\rho_{\lambda}|_{\mathrm{Gal}_{K_v}})^{F-ss}$ is independent of $\lambda$
				and $\iota:\overline{E}_\lambda\stackrel{\simeq}{\rightarrow} \C$.
    \end{enumerate}
\end{defi}

\begin{defi}\label{regdef}
    A $\lambda$-adic representation $\rho_{\lambda} : \mathrm{Gal}_{K} \to \mathrm{GL}_n(\overline{E}_{\lambda})$ is said to be \emph{regular} if it is unramified almost everywhere, $\rho_\lambda$ is de Rham at any $v$ above $\ell$,
and $\mathrm{HT}_\tau(\rho_\lambda)$ consists of $n$ distinct numbers for any embedding $\tau: K \hto \overline E_\lambda$.

		A strictly compatible system $\{\rho_{\lambda}: \mathrm{Gal}_K \to \mathrm{GL}_n(\overline E_{\lambda})\}_{\lambda}$ is said to be \emph{regular} if some (and hence all) $\rho_{\lambda}$ is regular. 
\end{defi}

If $K$ is totally real or CM field and $\pi$ is a regular algebraic, polarized, cuspidal automorphic representation of $\mathrm{GL}_n(\mathbb A_K)$, then one can attach a regular, semisimple $E$-rational, strictly compatible system $\{\rho_{\pi,\lambda}: \mathrm{Gal}_K \to \mathrm{GL}_n(\overline E_{\lambda})\}_{\lambda}$ for some $E$ (cf. \cite[$\mathsection~2.1$]{BLGGT14} and the references therein). We call the Galois representation $\rho_{\pi,\lambda}$
(resp. compatible system  $\{\rho_{\pi,\lambda}\}$) \emph{automorphic}.

\subsection{Potential automorphy of Galois representations}\label{s3.3}
Given an $\ell$-adic Galois representation $\rho_{\ell}:\mathrm{Gal}_K \to \mathrm{GL}_n(\overline{\mathbb Q}_{\ell})$, 
one can ask whether $\rho_\ell$ is automorphic (and thus part of a compatible system). 
We shall rely on the following potential automorphy results when $K$ is totally real.

\begin{thm}{\cite[Theorem C]{BLGGT14}}\label{thm_BLGGT14 Thm C}
    Let $K$ be a totally real field, $\ell \geq 2(n+1)$ be a rational prime, and
    \begin{align*}
        \rho_{\ell}:\mathrm{Gal}_K \to \mathrm{GL}_n(\overline{\mathbb Q}_{\ell})
    \end{align*}
    be an $\ell$-adic Galois representation of $K$. 
		Suppose the following conditions are satisfied.
    \begin{enumerate}[(1)]
        \item (Unramified almost everywhere) $\rho_{\ell}$ is unramified at all but finitely many primes.
        \item (Odd essential self-duality) Either $\rho_{\ell}$ maps to $\mathrm{GSp}_n$ with totally odd multiplier or it maps to $\mathrm{GO}_n$ with totally even multiplier.
        \item (Potential diagonalizability and regularity) $\rho_{\ell}$ is potentially diagonalizable (and hence potentially crystalline) at each prime $v$ of $K$ above $\ell$ and for each embedding $\tau : K \to \overline{\mathbb Q}_{\ell}$ it has $n$ distinct  $\tau$-Hodge-Tate numbers.
        \item (Irreducibility) $\overline{\rho}_{\ell}^{\mathrm{ss}}|_{\mathrm{Gal}_{K(\zeta_{\ell})}}$ is irreducible, where $\zeta_{\ell}:=e^{2\pi i/\ell}$ is the primitive $\ell$th root of unity.
    \end{enumerate}
    Then we can find a finite Galois totally real extension $K'/K$ such that $\rho_{\ell}|_{\mathrm{Gal}_{K'}}$ is attached to a regular algebraic polarized cuspidal automorphic representation of $\mathrm{GL}_n(\mathbb A_{K'})$. Moreover, $\rho_{\ell}$ is part of a strictly compatible system of $K$.
\end{thm}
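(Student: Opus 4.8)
This is \cite[Theorem C]{BLGGT14}, so no argument is really required here; I only sketch the shape of its proof for orientation, since in what follows it is used as a black box. The plan of \cite{BLGGT14} is to combine a \emph{potential automorphy} argument (in the form developed by Harris--Shepherd-Barron--Taylor and Taylor) with the automorphy lifting theorems of \cite{BLGGT14} in the potentially diagonalizable setting.

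\textbf{Step 1: produce an automorphic representation with the same residual image.} Using a suitable family of Calabi--Yau hypersurfaces --- a Dwork-type pencil --- whose associated $\ell$-adic Galois representations realize prescribed mod $\ell$ representations, together with a Moret-Bailly type argument on the relevant moduli space, one finds a finite \emph{totally real} Galois extension $K'/K$ and an $\ell$-adic representation $\rho'_\ell$ of $\mathrm{Gal}_{K'}$ which is automorphic (coming from a piece of the cohomology of a member of the family defined over $K'$) and whose semisimplified reduction is isomorphic to $\overline\rho_\ell^{\mathrm{ss}}|_{\mathrm{Gal}_{K'}}$. Here condition~(2) (odd essential self-duality) ensures that $\rho_\ell|_{\mathrm{Gal}_{K'}}$ and $\rho'_\ell$ sit in the polarized/essentially self-dual setting where the lifting theorems are available, condition~(4) (irreducibility of $\overline\rho_\ell^{\mathrm{ss}}$ restricted to $\mathrm{Gal}_{K(\zeta_\ell)}$) provides the ``big image'' input needed for Taylor--Wiles patching, and the bound $\ell\geq 2(n+1)$ leaves room for the various genericity hypotheses.

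\textbf{Step 2: automorphy lifting.} Apply the automorphy lifting theorem of \cite{BLGGT14}: since $\rho_\ell|_{\mathrm{Gal}_{K'}}$ and $\rho'_\ell$ have isomorphic irreducible residual representations and $\rho_\ell$ is potentially diagonalizable and regular at the primes above $\ell$ by~(3), automorphy propagates from $\rho'_\ell$ to $\rho_\ell|_{\mathrm{Gal}_{K'}}$, after possibly enlarging $K'$ within the totally real fields; this yields the desired regular algebraic polarized cuspidal automorphic representation of $\mathrm{GL}_n(\mathbb A_{K'})$. I expect this step --- which dispenses with the Fontaine--Laffaille and ordinarity hypotheses of earlier work --- to be the main obstacle and the genuinely new ingredient of \cite{BLGGT14}; everything else is assembled from the established potential automorphy toolkit.

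\textbf{Step 3: strict compatibility over $K$.} Automorphy over $K'$ gives a regular, strictly compatible system of $\mathrm{Gal}_{K'}$ containing $\rho_\ell|_{\mathrm{Gal}_{K'}}$. One then descends to $K$: the remaining members $\rho_{\ell'}$ over $K$ are obtained by rerunning the potential automorphy argument, and the characteristic polynomials of Frobenii at the unramified places of $K$, the Hodge--Tate weights, and the local Weil--Deligne representations are pinned down over $K$ by local-global compatibility for automorphic Galois representations together with the strict compatibility already established over $K'$ (with Brauer induction / solvable base change bridging $K$ and $K'$). This shows that $\rho_\ell$ is part of a strictly compatible system of $K$.
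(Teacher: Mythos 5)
This statement is quoted verbatim from \cite[Theorem C]{BLGGT14} and the paper supplies no proof of its own, exactly as you do; your orientation sketch (potential automorphy via the Dwork family and Moret--Bailly, automorphy lifting in the potentially diagonalizable setting, then assembling the strictly compatible system over $K$ by Brauer induction and local-global compatibility) is an accurate account of how the cited reference proceeds. Treating it as a black box is the same approach the paper takes, so nothing further is needed.
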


By combining Theorem \ref{thm_BLGGT14 Thm C} with some big image results (see $\mathsection\ref{s3.4}$),
we have the following potential automorphy result on certain regular three-dimensional subrepresentations 
in a ($\GL_n(E_\lambda)$-valued) strictly compatible system.

\begin{prop}{\cite[Proposition 2.12(b)]{Hui23b}}\label{prop_SO3 potential automorphy}
    Let $K$ be a totally real field and 
		$$\{\rho_{\lambda}:\mathrm{Gal}_{K} \to \mathrm{GL}_n(E_{\lambda})\}_{\lambda}$$ 
		be an $E$-rational strictly compatible system. 
		For almost all $\lambda\in\Sigma_E$, if $\sigma_{\lambda}$ is a regular three-dimensional subrepresentation of $\rho_{\lambda} \otimes \overline{E}_{\lambda}$ such that the derived subgroup $\bG_{\sigma_{\lambda}}^{\mathrm{der}}$ of its algebraic monodromy group is $\mathrm{SO}_3$ (as a group embedded in $\mathrm{GL_3}$), then there is a finite Galois totally real extension $K'/K$ such that $\sigma_{\lambda}|_{\mathrm{Gal}_{K'}}$ is attached to a regular algebraic polarized cuspidal automorphic representation of $\mathrm{GL}_3(\mathbb A_{K'})$. Moreover, such a subrepresentation $\sigma_{\lambda}$ is a part of a strictly compatible system of $K$.
\end{prop}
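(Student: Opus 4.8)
The plan is to verify, for all but finitely many $\lambda\in\Sigma_E$, the four hypotheses of the potential automorphy theorem of Barnet-Lamb--Gee--Geraghty--Taylor (Theorem~\ref{thm_BLGGT14 Thm C}) for $\sigma_\lambda$, regarded as a $3$-dimensional representation $\Gal_K\to\GL_3(\overline{\Q}_\ell)$ via a choice of isomorphism $\overline{E}_\lambda\cong\overline{\Q}_\ell$; its conclusion is exactly the assertion to be proved, including the ``moreover'' statement that $\sigma_\lambda$ belongs to a strictly compatible system of $K$. Let $S\subset\Sigma_K$ be the finite bad set attached to $\{\rho_\lambda\}$. For almost all $\lambda$ one has $\ell\geq 8$ (the lower bound in Theorem~\ref{thm_BLGGT14 Thm C} in dimension $3$), one has $S\cap S_\ell=\emptyset$, and (by weak compatibility) the Hodge--Tate weights of $\rho_\lambda$, hence of its subrepresentation $\sigma_\lambda$, are bounded independently of $\lambda$; I discard the finitely many exceptional $\lambda$.

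I would first dispose of hypotheses (1)--(3). Hypothesis (1) is immediate, since $\rho_\lambda$ is unramified outside $S\cup S_\ell$ and $\sigma_\lambda$ is a subrepresentation. For (3): because $S\cap S_\ell=\emptyset$, the representation $\rho_\lambda$, hence $\sigma_\lambda$, is crystalline at every $v\mid\ell$; as $\ell$ exceeds the uniformly bounded spread of the Hodge--Tate weights, $\sigma_\lambda|_{\Gal_{K_v}}$ lies in the Fontaine--Laffaille range and is therefore potentially diagonalizable, while its $\tau$-Hodge--Tate numbers are distinct since $\sigma_\lambda$ is assumed regular. For (2): the monodromy group $\bG_{\sigma_\lambda}$ normalizes $\bG_{\sigma_\lambda}^{\der}=\SO_3\subset\GL_3$; since $\SO_3$ acts irreducibly on $\overline{\Q}_\ell^3$ and $\SO_3\cong\PGL_2$ has no outer automorphisms, $N_{\GL_3}(\SO_3)=\mathbb{G}_m\cdot\SO_3=\GO_3$, so $\sigma_\lambda$ maps into $\GO_3(\overline{\Q}_\ell)$. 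As $\mathbb{G}_m\cap\SO_3=\{1\}$ in $\GL_3$, writing $\sigma_\lambda(g)=\eta(g)R_g$ with $R_g\in\SO_3$ defines a continuous character $\eta$ whose square is the orthogonal multiplier of $\sigma_\lambda$; a square character is totally even (its value at any complex conjugation $c_v$ equals $\eta(c_v)^2=1$), and the $\GSp_3$ alternative is vacuous in odd dimension, so (2) holds.

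The remaining hypothesis (4), absolute irreducibility of $\overline{\sigma}_\lambda^{\ss}|_{\Gal_{K(\zeta_\ell)}}$, is the heart of the matter and is where the big-image results of $\mathsection\ref{s3.4}$ are used. Exploiting the $\lambda$-uniformity of the Frobenius data of the compatible system together with Larsen--Pink-type classifications, these results show that for almost all $\lambda$ the residual image $\overline{\sigma}_\lambda^{\ss}(\Gal_K)\subset\GL_3(\overline{\F}_\ell)$ contains a conjugate of the commutator subgroup of $\SO_3(\F_\ell)\cong\PGL_2(\F_\ell)$, i.e.\ a copy of $\PSL_2(\F_\ell)$, as forced by $\bG_{\sigma_\lambda}^{\der}=\SO_3$ for $\ell$ large. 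Since $K(\zeta_\ell)/K$ is abelian, the image of $\Gal_{K(\zeta_\ell)}$ still contains this normal subgroup, and $\PSL_2(\F_\ell)$ acts absolutely irreducibly on $\overline{\F}_\ell^3$ for $\ell$ large, giving (4). With all four hypotheses verified, Theorem~\ref{thm_BLGGT14 Thm C} applied to $\sigma_\lambda$ produces a finite Galois totally real extension $K'/K$ over which $\sigma_\lambda$ is attached to a regular algebraic polarized cuspidal automorphic representation of $\GL_3(\A_{K'})$ and simultaneously places $\sigma_\lambda$ in a strictly compatible system of $K$. I expect the main obstacle to be precisely this residual big-image step: passing from $\lambda$-uniform control of the $\ell$-adic monodromy $\bG_{\sigma_\lambda}$ to effective control of the mod-$\ell$ image for all but finitely many $\ell$ is the content of the results collected in $\mathsection\ref{s3.4}$.
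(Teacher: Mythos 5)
The paper does not prove this proposition; it is imported verbatim from \cite[Proposition 2.12(b)]{Hui23b}, so the comparison is with the argument in that reference, which (like yours) runs by verifying the hypotheses of Theorem \ref{thm_BLGGT14 Thm C} for $\sigma_\lambda$. Your treatment of hypotheses (1)--(3) is correct and standard, and your argument for (2) via $N_{\GL_3}(\SO_3)=\mathbb{G}_m\cdot\SO_3=\GO_3$ and the multiplier being the square of a character is exactly right. The one place where your write-up is imprecise is hypothesis (4): the results collected in $\mathsection$\ref{s3.4} do \emph{not} assert that the residual image contains a copy of $\PSL_2(\F_\ell)$; what Theorem \ref{big}(ii) actually gives (its hypotheses (a),(b) being supplied by Proposition \ref{cond} since the system is strictly compatible) is that for almost all $\lambda$, any type $A$ Lie-irreducible subrepresentation $\sigma_\lambda$ of $\rho_\lambda\otimes\overline E_\lambda$ has $\bar\sigma_\lambda^{\ss}|_{\Gal_{K^{\ab}}}$ irreducible. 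Your $\sigma_\lambda$ is of type $A$ (namely $A_1=B_1$) and Lie-irreducible because $\bG_{\sigma_\lambda}^{\der}=\SO_3$ already acts irreducibly on the ambient $3$-space, and since $K(\zeta_\ell)\subset K^{\ab}$, irreducibility over $\Gal_{K^{\ab}}$ immediately implies irreducibility over $\Gal_{K(\zeta_\ell)}$. So the conclusion you need is available directly, and the detour through an unproved $\PSL_2(\F_\ell)$-containment should be replaced by this citation; with that substitution the proof is complete and matches the expected strategy of the source.
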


\subsection{Big image results of Galois subrepresentations}\label{s3.4}
We present some big image results 
for type $A$ Galois subrepresentations in 
certain ($\GL_n(E_\lambda)$-valued) semisimple $E$-rational compatible system.

\begin{thm}\cite[Theorem 1.2, Theorem 3.12(v)]{Hui23a}\label{big}
Let  $\{\rho_\lambda:\Gal_K\to\GL_n(E_\lambda)\}$
be a semisimple $E$-rational compatible system of a number field $K$. 
Suppose there exist some integers $N_1,N_2\geq 0$ and finite extension $K'/K$ such that the following conditions hold. 
\begin{enumerate}[(a)]
\item (Bounded tame inertia weights): for almost all $\lambda$ 
and each finite place $v$ of $K$ above $\ell$, 
the tame inertia weights of the local representation 
$(\bar\rho_{\lambda}^{\ss}\otimes\bar\epsilon_\ell^{N_1})|_{\Gal_{K_v}}$ belong to $[0,N_2]$.
\item (Potential semistability): for almost all $\lambda$ and each finite place $w$ of $K'$ not above $\ell$,
the semisimplification of the local representation $\bar\rho_{\lambda}^{\ss}|_{\Gal_{K_{w}'}}$ is unramified.
\end{enumerate}
For almost all $\lambda\in\Sigma_E$, the following assertions hold.
\begin{enumerate}[(i)]
\item  If $\sigma_\lambda$ is a type $A$ irreducible subrepresentation
of $\rho_\lambda\otimes\overline E_\lambda$, then the residual representation $\bar\sigma_\lambda^{\ss}$ is also irreducible.
\item If $\sigma_\lambda$ is a type $A$ Lie-irreducible subrepresentation
of $\rho_\lambda\otimes\overline E_\lambda$, then the restriction $\bar\sigma_\lambda^{\ss}|_{\Gal_{K^{ab}}}$ is also irreducible,
where $K^{ab}$ is the maximal abelian extension of $K$.
\end{enumerate}
\end{thm}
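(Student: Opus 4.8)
The plan is to prove (ii) as the main case and to deduce (i) from it by descent along induced representations; throughout, ``almost all $\lambda$'' means outside a finite subset of $\Sigma_E$, and each step below enlarges this exceptional set only finitely. First I would reduce (i) to (ii). If $\sigma_\lambda$ is a type $A$ irreducible subrepresentation that is not Lie-irreducible, then by Clifford theory for the algebraic monodromy group (using that $\bG_{\sigma_\lambda}^\circ$ acts isotypically) one has $\sigma_\lambda\cong\Ind_{\Gal_L}^{\Gal_K}\tau_\lambda$ for a finite extension $L/K$ with $[L:K]\leq n$ and a type $A$ Lie-irreducible $\tau_\lambda\subset\rho_\lambda|_{\Gal_L}\otimes\overline E_\lambda$; by Serre's $\lambda$-independence of the component group, $L$ may be taken from a fixed finite list independent of $\lambda$. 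The system $\{\rho_\lambda|_{\Gal_L}\}$ again satisfies (a) and (b) (replace $K'$ by $K'L$ and enlarge $N_2$ by a factor $[L:K]$ to absorb ramification), so (ii) over $L$ gives that $\bar\tau_\lambda^{\ss}$ is irreducible for almost all $\lambda$. Since reduction modulo $\lambda$ commutes with induction, $\bar\sigma_\lambda^{\ss}=\Ind_{\Gal_L}^{\Gal_K}\bar\tau_\lambda^{\ss}$, which is irreducible provided the $\Gal_K/\Gal_L$-conjugates of $\bar\tau_\lambda^{\ss}$ are pairwise non-isomorphic. Their $\ell$-adic analogues are pairwise non-isomorphic because $\sigma_\lambda$ is irreducible, hence two of them differ at some Frobenius; as the relevant Frobenius traces lie in a $\lambda$-independent number field (compatibility of the system), their difference is a fixed nonzero algebraic number, which reduces to zero modulo only finitely many $\lambda$. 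Thus (i) follows from (ii).

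\textbf{Big mod-$\lambda$ image.} Now let $\sigma_\lambda$ be type $A$ and Lie-irreducible, and write $\bG:=\bG_{\sigma_\lambda}$, a connected reductive subgroup of $\GL(\sigma_\lambda)$ acting irreducibly, with $\bG^{\der}$ isogenous to $\prod_{i=1}^{k}\SL_{m_i+1}$ and $\sigma_\lambda|_{\bG^{\der}}\cong W_1\boxtimes\cdots\boxtimes W_k$ for irreducibles $W_i$ of $\SL_{m_i+1}$ (if $\bG^{\der}$ is trivial the claim is vacuous, as then $\sigma_\lambda$ is one-dimensional). The key assertion is that for almost all $\lambda$ the image $\bar\sigma_\lambda^{\ss}(\Gal_K)\subset\GL(\sigma_\lambda\bmod\lambda)$ contains the image of a product $\prod_{i}\SL_{m_i+1}(\F_{\ell^{f}})$ under the reductions $\overline W_i$. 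This is where the hypotheses enter. Since $\rho_\lambda(\Gal_K)$ is Zariski-dense in $\bG_{\rho_\lambda}$, whose formal bi-character is $\lambda$-independent, the semisimple rank of $\bG_{\rho_\lambda}$, hence of $\bG^{\der}$ via the surjection $\bG_{\rho_\lambda}\twoheadrightarrow\bG$ coming from restriction to the subspace $\sigma_\lambda$, does not vary with $\lambda$. Condition (a) forces, for $\ell$ large, the residual local representations at $v\mid\ell$ (after the fixed twist) to lie in the Fontaine--Laffaille range, so that the mod-$\ell$ reduction of a natural integral model is not degenerate at $\ell$; condition (b) ensures that, after restriction to $\Gal_{K'}$, the residual representation is unramified away from $\ell$, so its image is generated by Frobenii and is pinned down by trace-compatibility and Chebotarev rather than shrinking at the bad places. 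Feeding this into the Larsen--Pink--Nori analysis of finite subgroups of $\GL_n(\overline\F_\ell)$, one gets that $\bar\rho_\lambda^{\ss}(\Gal_K)$, modulo its centre and up to bounded index, contains the $\F_\ell$-points of an $\F_\ell$-form of $\bG_{\rho_\lambda}^{\der}$ of the expected type; projecting, the asserted product of finite groups of Lie type of types $A_{m_i}$ sits inside $\bar\sigma_\lambda^{\ss}(\Gal_K)$.

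\textbf{Residual irreducibility and the $K^{ab}$-statement.} It remains to see that $\overline W_1\boxtimes\cdots\boxtimes\overline W_k$ is irreducible over $\prod_i\SL_{m_i+1}(\F_{\ell^f})$. The highest weights $\lambda_i$ of the $W_i$ are bounded independently of $\lambda$ simply because $\dim W_i\leq n$ and irreducible representations of $\SL_{m_i+1}$ grow in dimension with the highest weight; hence for all but finitely many $\lambda$ each $\lambda_i$ is $\ell$-restricted. For such $\ell$ the reduction $\overline W_i$ of a lattice in $W_i$ is the Weyl module, which is already irreducible (equal to $L(\lambda_i)$) for weights small relative to $\ell$ by linkage, and by Steinberg's restriction theorem $L(\lambda_i)$ remains irreducible on $\SL_{m_i+1}(\F_{\ell^f})$; irreducibility of the external tensor product over the product of the finite groups follows. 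Thus $\bar\sigma_\lambda^{\ss}(\Gal_K)$ contains an irreducibly acting subgroup, so $\bar\sigma_\lambda^{\ss}$ is irreducible. Finally, $\SL_{m_i+1}(\F_{\ell^f})$ is perfect for $\ell$ large, so the product is perfect, and a perfect subgroup of $\bar\sigma_\lambda^{\ss}(\Gal_K)$ maps trivially to the abelian quotient $\bar\sigma_\lambda^{\ss}(\Gal_K)/\bar\sigma_\lambda^{\ss}(\Gal_{K^{ab}})$; hence it already lies in $\bar\sigma_\lambda^{\ss}(\Gal_{K^{ab}})$, which gives (ii).

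\textbf{Main obstacle.} The crux is the big mod-$\lambda$ image step: extracting the Larsen--Pink--Nori structure of $\bar\rho_\lambda^{\ss}(\Gal_K)$ \emph{uniformly in $\lambda$}, which requires marrying integral $p$-adic Hodge theory (licensed by (a)) with the classification of finite linear groups having no small-index reductive over-group, while controlling bad ramification via (b) and keeping the semisimple rank fixed via the $\lambda$-independence of the bi-character. The $\lambda$-uniformity of the descent field $L$ and of the relevant Frobenius traces in the reduction of (i) to (ii) is a secondary but nontrivial technical point.
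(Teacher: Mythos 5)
This statement is quoted verbatim from \cite[Theorem 1.2, Theorem 3.12(v)]{Hui23a}; the present paper gives no proof of it, so there is no internal argument to compare against. Judged on its own terms, your sketch does capture the broad strategy of the cited work (control the residual image via a Nori/Larsen--Pink ``algebraic envelope'', use conditions (a) and (b) to prevent degeneration at $\ell$ and at the bad places, and exploit the $\lambda$-independence of the formal bi-character together with the type $A$ hypothesis), but it has genuine gaps at both of its load-bearing joints.

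First, the reduction of (i) to (ii) is incomplete and non-uniform. The correct structure theorem for an irreducible $\sigma_\lambda$ that is not Lie-irreducible is not simply $\sigma_\lambda\cong\Ind_{\Gal_L}^{\Gal_K}\tau_\lambda$ with $\tau_\lambda$ Lie-irreducible: there is a third branch in the trichotomy, namely a tensor factorization $\tau\otimes\eta$ with $\eta$ of finite image, which your argument does not treat. More seriously, your claim that the $\Gal_K$-conjugates of $\bar\tau_\lambda^{\ss}$ stay pairwise non-isomorphic for almost all $\lambda$ rests on ``the relevant Frobenius traces lie in a $\lambda$-independent number field''; compatibility only controls the characteristic polynomials of $\rho_\lambda(\mathrm{Frob}_v)$, not which subset of eigenvalues the subrepresentation $\tau_\lambda$ picks out, and the place $v$ witnessing the non-isomorphism may vary with $\lambda$. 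Without a uniform choice of $v$ (or a compatible system containing the $\tau_\lambda$, which is not available), ``a fixed nonzero algebraic number reduces to zero modulo only finitely many $\lambda$'' does not apply. Second, the ``big mod-$\lambda$ image'' paragraph asserts, rather than proves, the entire content of the theorem: that the algebraic envelope of $\bar\rho_\lambda^{\ss}(\Gal_K)$ has the \emph{same} formal bi-character as $\bG_{\rho_\lambda}$ for almost all $\lambda$ (rather than something strictly smaller) is precisely the hard uniformity statement that \cite{Hui23a} spends its length establishing, and it is also where the type $A$ hypothesis is essential --- via \cite{LP90}, the formal character determines the irreducible representation in type $A$, which is what converts equality of bi-characters into residual irreducibility. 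Your alternative endgame through Weyl modules and Steinberg's restriction theorem would work only after that structure is already in hand.
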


\begin{prop}\cite[Proposition 2.11]{Hui23b}\label{cond}
Let $\{\rho_\lambda:\Gal_K\to\GL_n(E_\lambda)\}$ be a ($E$-rational) 
strictly compatible system of a number field $K$.
Then $\{\rho_\lambda\}$ satisfies the conditions \ref{big}(a),(b) 
for some integers $N_1,N_2\geq 0$ and finite extension $K'/K$.
\end{prop}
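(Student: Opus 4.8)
The plan is to verify conditions \ref{big}(a) and \ref{big}(b) separately, in each case extracting the needed $\lambda$-uniformity from the definition of a strictly compatible system (Definitions \ref{csdef} and \ref{wcsdef}).

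For \ref{big}(a), I would first use weak compatibility: the Hodge--Tate multisets $\mathrm{HT}_\tau$ are the same for every $\lambda$, so if $a$ (resp.\ $b$) denotes the smallest (resp.\ largest) integer occurring among the finitely many $\mathrm{HT}_\tau$, one can pick $N_1\ge 0$ so that $\rho_\lambda\otimes\epsilon_\ell^{N_1}$ has Hodge--Tate weights lying in a fixed interval $[0,c]$ (with $c$ independent of $\lambda$) at every place above $\ell$. Because $S$ is finite, only finitely many $\lambda$ have a place of $S$ above $\ell$, so for almost all $\lambda$ the twisted representation is crystalline at every $v\mid\ell$. After discarding the further finite set of $\lambda$ with $\ell$ small relative to $c$ and to the (fixed) ramification indices $e_v$ of $K$, I would invoke the standard bound on the tame inertia weights of the mod-$\ell$ reduction of a crystalline lattice with Hodge--Tate weights in $[0,c]$ (Fontaine--Laffaille theory, valid once $\ell$ is in range) to conclude that the tame inertia weights of $(\bar\rho_\lambda^{\ss}\otimes\bar\epsilon_\ell^{N_1})|_{\Gal_{K_v}}$ lie in $[0,N_2]$, where $N_2:=c\max_v e_v$; semisimplification changes nothing here, since tame inertia weights only depend on the Jordan--Hölder constituents.

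For \ref{big}(b), the plan is to first produce $K'$ and then argue local triviality. For $v\in S$ with residue characteristic $p_v$ and any $\lambda$ with $\ell\ne p_v$, the Weil--Deligne formalism writes $\rho_\lambda(\sigma)=r_v(\sigma)\exp\!\big(N_v\,t_\ell(\sigma)\big)$ for $\sigma\in I_v$, with $t_\ell$ the $\ell$-adic tame character, $N_v$ nilpotent, and $r_v$ a Weil-group representation whose restriction to $I_v$ has finite image. The key point is that strict compatibility (condition (c) of Definition \ref{wcsdef}, the $\lambda$-independence of $\mathrm{WD}(\rho_\lambda|_{\Gal_{K_v}})^{F\text{-}ss}$) forces the open subgroup $\ker(r_v|_{I_v})\subset I_v$ to be the same for all such $\lambda$. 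Since $S$ is finite, I can then choose a finite extension $K'/K$ such that for every $v\in S$ and every place $w\mid v$ of $K'$ one has $I_{K'_w}\subset\ker(r_v|_{I_v})$. For almost all $\lambda$ (those with $\ell\notin\{p_v:v\in S\}$) and every $w\nmid\ell$ of $K'$ lying above some $v\in S$, this gives $\rho_\lambda(\sigma)=\exp(N_v\,t_\ell(\sigma))$ on $I_{K'_w}$, so $\rho_\lambda(I_{K'_w})$ is a topologically cyclic unipotent subgroup and its reduction $\bar\rho_\lambda(I_{K'_w})$ is a finite group of unipotent matrices over $\overline{\F}_\ell$, i.e.\ a finite $\ell$-group. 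Hence $I_{K'_w}$ acts unipotently through a finite $\ell$-group on each Jordan--Hölder constituent of $\bar\rho_\lambda$, and therefore on $\bar\rho_\lambda^{\ss}|_{\Gal_{K'_w}}$; passing to the semisimplification of this local representation, the inertia action becomes simultaneously semisimple and a representation of a finite $\ell$-group in characteristic $\ell$, hence trivial, so the semisimplification of $\bar\rho_\lambda^{\ss}|_{\Gal_{K'_w}}$ is unramified. At the remaining places $w$ of $K'$, which lie neither above $\ell$ nor above $S$, unramifiedness of $\bar\rho_\lambda^{\ss}$ is immediate from the compatible-system axiom that $\rho_\lambda$ is unramified outside $S\cup S_\ell$.

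The main obstacle is the $\lambda$-uniformity in part \ref{big}(b): Grothendieck's monodromy theorem by itself only makes $\rho_\lambda|_{I_v}$ quasi-unipotent over a finite extension that could a priori depend on $\lambda$, so one genuinely needs the $\lambda$-independence of the Weil--Deligne representations (the ``strict'' in strictly compatible) to fix one $K'$ working for almost every $\lambda$. Everything else --- discarding finitely many $\lambda$ in both parts, and the classical $p$-adic Hodge-theoretic bound on tame inertia weights of crystalline representations of bounded Hodge--Tate weight --- is routine.
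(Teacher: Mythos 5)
Your argument is correct and is essentially the standard proof: the paper itself gives no proof of Proposition \ref{cond} (it simply cites \cite[Proposition 2.11]{Hui23b}), and the cited proof proceeds exactly as you do — bounding tame inertia weights via the $\lambda$-independent Hodge--Tate range together with Fontaine--Laffaille/Caruso-type bounds for (a), and using the $\lambda$-independence of the Weil--Deligne restriction to inertia to fix one $K'$ for (b). The only cosmetic point is that for (a) one may as well discard the finitely many $\ell$ ramified in $K$, so the unramified-base form of Fontaine--Laffaille theory suffices and your factor $\max_v e_v$ is not really needed.
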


\subsection{$\lambda$-independence of formal bi-character}\label{s3.5}
Let $F$ be an algebraically closed field of characteristic zero and $\bG\subset\GL_{n,F}$
a reductive subgroup. Denote by $\bT$ a maximal torus of $\bG$. 
Then  the intersection $\bT^{\ss}:=\bG^{\der}\cap \bT$ is a maximal torus of the semisimple group $\bG^{\der}$.
The following definition is independent of the choice of the maximal torus $\bT$.

\begin{defi}\label{formchar}
Let $\bG\subset\GL_{n,F}$ be a reductive subgroup.
    \begin{enumerate}[(1)]
\item The \emph{formal character} of $\bG$ is defined as the conjugacy class of the torus $\bT$ 
in $\mathrm{GL}_{n, F}$.
\item The \emph{formal bi-character} of $\bG$ is defined as the conjugacy class of 
the chain of subtori $\bT^{\ss}\subset\bT$ in $\mathrm{GL}_{n, F}$.
    \end{enumerate}
\end{defi}

Suppose that $\bG_1$ and $\bG_2$ are reductive subgroups of respectively 
$\mathrm{GL}_{n,F_1}$ and $\mathrm{GL}_{n,F_2}$, where $F_1$ and $F_2$ are algebraically closed fields of characteristic zero.
Embed $F_1$ and $F_2$ into an algebraically closed field $F$.
We say that the formal characters (resp. formal bi-characters) of $\bG_1$ and $\bG_2$ are \emph{the same}
if this is true for the base change $\bG_{1,F}$ and $\bG_{2,F}$ in $\GL_{n,F}$.
This definition is independent of the choice of the over-field $F$.




We have the following $\lambda$-independence results about  algebraic 
monodromy groups. 

\begin{theorem}\label{thm_formal bi-char indep}
Let $K$ be a number field and $\{\rho_\lambda:\Gal_K\to\GL_n(\overline E_\lambda)\}$ be a semisimple $E$-rational compatible system 
with algebraic monodromy groups $\{\bG_\lambda\}$.
    \begin{enumerate}
        \item[(i)] \cite{Ser81} The component group $\bG_{\lambda}/\bG_{\lambda}^{\circ}$ is independent of $\lambda$. In particular, 
				the connectedness of $\bG_\lambda$ is independent of $\lambda$.
        \item[(ii)] \cite{Ser81} The formal character of $\bG_{\lambda} \subset \mathrm{GL}_{n,\overline E_\lambda}$ is independent of $\lambda$. In particular, the rank of $\bG_{\lambda}$ is independent of $\lambda$. 
				\item[(iii)] \cite[Theorem 3.19, Remark 3.22]{Hui13} The formal bi-character of $\bG_{\lambda} \subset \mathrm{GL}_{n,\overline E_\lambda}$ is independent of $\lambda$. In particular, the formal character of $\bG_{\lambda}^{\der}\subset\GL_{n,\overline E_\lambda}$ 
				(resp. semisimple rank of $\bG_\lambda$) is independent of $\lambda$.
    \end{enumerate}
\end{theorem}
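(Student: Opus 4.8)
All three assertions are known: (i) and (ii) are Serre's \cite{Ser81}, and (iii) is \cite{Hui13}; I outline how I would argue. The engine is Serre's notion of \emph{Frobenius tori}. Since each $\rho_\lambda$ is semisimple, for $v\in\Sigma_K$ outside the finite bad set $S\cup S_\ell$ the matrix $g_{\lambda,v}:=\rho_\lambda(\mathrm{Frob}_v)$ is semisimple with characteristic polynomial $\Phi_v(T)\in E[T]$, \emph{independent of $\lambda$}. I would let $\bT_{\lambda,v}\subset\GL_{n,\overline E_\lambda}$ be the Zariski closure of the cyclic group generated by $g_{\lambda,v}$; then $\bT_{\lambda,v}^\circ$ is a torus, $\bT_{\lambda,v}\subset\bG_\lambda$, and the conjugacy class of the chain $\bT_{\lambda,v}^\circ\subset\bT_{\lambda,v}$ in $\GL_{n}$ depends only on the roots of $\Phi_v$ together with the lattice of multiplicative relations they satisfy, hence is independent of $\lambda$. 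The key input I would invoke is Serre's theorem that, after a finite base change making $\bG_\lambda$ connected, the set of $v$ for which $\bT_{\lambda,v}^\circ$ is a \emph{maximal} torus of $\bG_\lambda^\circ$ has Dirichlet density $1$.

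To get (ii) I would fix a $v$ of this density-$1$ type that is good simultaneously for two given $\lambda,\lambda'$: then $\bT_{\lambda,v}^\circ$ and $\bT_{\lambda',v}^\circ$ are maximal tori of $\bG_\lambda^\circ$ and $\bG_{\lambda'}^\circ$ with the same conjugacy class in $\GL_{n}$, which is exactly $\lambda$-independence of the formal character. For (i), $\rho_\lambda(\Gal_K)$ is Zariski dense and thus surjects onto the finite group $\bG_\lambda/\bG_\lambda^\circ$, giving a finite Galois extension $K_\lambda/K$ with $\Gal(K_\lambda/K)\cong\bG_\lambda/\bG_\lambda^\circ$; I would show $K_\lambda$ and this isomorphism do not depend on $\lambda$ by a Chebotarev argument, reading off the class of $\mathrm{Frob}_v$ in $\Gal(K_\lambda/K)$ from the multiplicative relations among the roots of $\Phi_v$ (valid on the density-$1$ set above) and then matching the resulting density-$1$ subsets of $\Sigma_K$ across $\lambda$.

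For (iii) the extra task is to track the subtorus $\bT^{\ss}=\bG_\lambda^{\der}\cap\bT$. Writing $\bG_\lambda^\circ=\bG_\lambda^{\der}\cdot\bC_\lambda$ with $\bC_\lambda$ the connected center, one has $\bT=\bT^{\ss}\cdot\bC_\lambda$, so the conjugacy class of the chain $\bT^{\ss}\subset\bT$ in $\GL_{n}$ is pinned down by the formal character of $\bG_\lambda$ together with the formal character of the central torus $\bC_\lambda\subset\GL_{n}$. The first is handled by (ii); for the second I would use that $\bC_\lambda$ controls the maximal torus quotient of $\bG_\lambda^\circ$, which is governed by the abelianized compatible system $\{\rho_\lambda^{\mathrm{ab}}\}$, a compatible system of sums of characters, whose formal character is $\lambda$-independent by Serre's classification of locally algebraic abelian $\ell$-adic representations; propagating this through as in \cite[Theorem 3.19, Remark 3.22]{Hui13} gives (iii). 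The statements about $\bG_\lambda^{\der}$ and the semisimple rank are then formal.

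The hard part will be Serre's density-$1$ maximality theorem for Frobenius tori: after reducing to connected $\bG_\lambda$, one must show that the ``bad'' locus --- those $v$ for which a fixed nontrivial character of a maximal torus vanishes on $g_{\lambda,v}$, i.e.\ $\bT_{\lambda,v}^\circ$ lies in a proper subtorus --- has density $0$, which combines Chebotarev with the fact that this vanishing locus is a proper conjugation-stable subvariety of $\bG_\lambda$. For (iii) the additional delicate point is the analysis of $\bC_\lambda$, where Serre's theory of abelian semisimple $\ell$-adic representations (CM characters) is essential.
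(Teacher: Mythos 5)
This theorem is stated in the paper purely as a citation of \cite{Ser81} (for (i),(ii)) and \cite[Theorem 3.19, Remark 3.22]{Hui13} (for (iii)); no proof is given in the text. Your sketch of (i) and (ii) is the standard Frobenius-torus argument of Serre and is essentially correct: the conjugacy class in $\GL_n$ of the chain $\bT_{\lambda,v}^\circ\subset\bT_{\lambda,v}$ depends only on $\Phi_v$, maximality holds on a density-one set after passing to the connected cover, and the field $K^{\conn}_\lambda$ cut out by $\rho_\lambda^{-1}(\bG_\lambda^\circ)$ is recovered from the ($\lambda$-independent) set of $v$ for which $\bT_{\lambda,v}$ is connected, which determines the component group as a quotient of $\Gal_K$. (One small imprecision: you can read off from $\Phi_v$ only whether $\mathrm{Frob}_v$ is \emph{trivial} in $\Gal(K^{\conn}_\lambda/K)$, not its full conjugacy class; but the set of completely split primes already determines the Galois extension, so this suffices.)

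The genuine weak point is your reduction in (iii). Knowing the formal character of $\bG_\lambda$ and the formal character of the connected centre $\bC_\lambda$ \emph{separately} does not pin down the conjugacy class of the chain $\bT^{\ss}\subset\bT$: the subtorus $\bT^{\ss}$ is a complement of $\bC_\lambda$ in $\bT$ up to isogeny, but which complement it is (equivalently, where the coroot lattice sits inside $X_*(\bT)$) is extra data not determined by the two conjugacy classes, nor even by the conjugacy class of the pair $(\bC_\lambda,\bT)$ in general. This is precisely why \cite[Theorem 3.19]{Hui13} is not a formal consequence of (ii). The correct mechanism — which the present paper spells out in $\mathsection\ref{s3.6}$, Theorem \ref{refine} — is to build an auxiliary abelian compatible system $\{\phi_\lambda\}$ from the Serre group $S_{\mathfrak m}$ through which the abelianized monodromy factors, apply the $\lambda$-independence of the \emph{joint} formal character of $\{\rho_\lambda\oplus\phi_\lambda\}$ (i.e., of the maximal torus $\widetilde\bT_\lambda$ of the combined monodromy group, not the product of the two separate conjugacy classes), and then recover the chain as $\ker(\pi_2|_{\widetilde\bT_{\lambda}})^\circ\subset\pi_1(\widetilde\bT_{\lambda})$. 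Your appeal to "propagating this through as in \cite{Hui13}" points at the right reference, but the reduction as you state it ("pinned down by the formal character of $\bG_\lambda$ together with the formal character of $\bC_\lambda$") would not go through as written.
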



\begin{remark}\label{remconn}
For a compatible system $\{\rho_\lambda\}$ with algebraic monodromy groups $\{\bG_\lambda\}$,
the semisimplification $\{\rho_\lambda^{\ss}\}$ is a semisimple compatible system with algebraic monodromy groups $\{\bG_\lambda/\bU_\lambda\}$,
where $\bU_\lambda$ is the unipotent radical of $\bG_\lambda$. Since $\bU_\lambda$ is always connected,
it follows that $\bG_\lambda/\bG_\lambda^\circ$ is independent of $\lambda$ by Theorem \ref{thm_formal bi-char indep}(i).
\end{remark}

A compatible system $\{\rho_\lambda\}$ 
is said to be \emph{connected} if the algebraic monodromy group $\bG_\lambda$ is connected for some $\lambda$, equivalently, 
for all $\lambda$ by Remark \ref{remconn}. For any compatible system $\{\rho_\lambda\}$ (of $K$), there exists
a finite extension $L/K$ such that the restriction $\{\rho_\lambda|_{\Gal_L}\}$ is connected.

\subsection{A refinement of Theorem \ref{thm_formal bi-char indep}(ii),(iii)}\label{s3.6}
In this subsection, we refine Theorem \ref{thm_formal bi-char indep}(ii),(iii) 
by utilizing the method in \cite[$\mathsection3$]{Hui13}.
Suppose that for each $1\leq i \leq k$, $\{\rho_\lambda^{(i)}:\Gal_K\to\GL_{n_i}(\overline E_\lambda)\}$ 
is a semisimple $E$-rational compatible system of $K$.
Consider the semisimple $E$-rational compatible system
\begin{equation}\label{bigcs}
\{\hat\rho_\lambda:=\bigoplus_{i=1}^k \rho_\lambda^{(i)}:\Gal_K\to \prod_{i=1}^k \GL_{n_i}(\overline E_\lambda)\}
\end{equation}
given by direct sum and let 
$$\{\hat\bG_\lambda\subset \prod_{i=1}^k \GL_{n_i,\overline E_\lambda}\}$$
be the system of algebraic monodromy groups. For all $\lambda\in\Sigma_E$, let
$$\hat\bT_\lambda^{\ss}\subset\hat\bT_\lambda$$ 
be a chain of subtori in $\prod_{i=1}^k \GL_{n_i,\overline E_\lambda}$ 
where $\hat\bT_\lambda$ is a maximal torus of $\hat\bG_\lambda$
and $\hat\bT_\lambda^{\ss}$ is a maximal torus of $\hat\bG_\lambda^{\der}$.

\begin{thm}\label{refine} 
After the base change $\iota_\lambda:\overline E_\lambda\stackrel{\simeq}{\rightarrow}\C$ for all $\lambda\in\Sigma_E$,
\begin{enumerate}[(i)]
\item the conjugacy class of the subtorus $\hat\bT_{\lambda,\C}$ in $\prod_{i=1}^k \GL_{n_i,\C}$ is independent of $\lambda$;
\item the conjugacy class of the chain $\hat\bT_{\lambda,\C}^{\ss}\subset\hat\bT_{\lambda,\C}$ in 
$\prod_{i=1}^k \GL_{n_i,\C}$ is independent of $\lambda$.
\end{enumerate}
\end{thm}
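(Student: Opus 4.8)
The strategy is to run the Frobenius--torus argument behind Theorem~\ref{thm_formal bi-char indep} inside the product $\prod_{i=1}^k\GL_{n_i}$ rather than inside a single $\GL_N$. Put $N:=n_1+\cdots+n_k$ and $\hat\rho_\lambda:=\bigoplus_{i=1}^k\rho_\lambda^{(i)}:\Gal_K\to\GL_N(\overline E_\lambda)$, so that the block-diagonal inclusion $\prod_i\GL_{n_i}\hookrightarrow\GL_N$ exhibits $\hat\bG_\lambda\subseteq\prod_i\GL_{n_i,\overline E_\lambda}$. I first prove (i). For a finite place $v$ of $K$ outside the bad set and not dividing $\ell$, let $\bT_{v,\lambda}$ be the Frobenius torus at $v$, i.e.\ the Zariski closure in $\GL_{N,\overline E_\lambda}$ of the cyclic group generated by the semisimple part of $\hat\rho_\lambda(\mathrm{Frob}_v)$. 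By functoriality of the Jordan decomposition, $\bT_{v,\lambda}\subseteq\hat\bG_\lambda$; and, realized inside $\prod_i\GL_{n_i}$, it is the subtorus of a diagonal torus cut out by the lattice of multiplicative relations among the eigenvalues of $(\rho_\lambda^{(i)}(\mathrm{Frob}_v))_i$. Since those eigenvalues are exactly the roots of the characteristic polynomials $\Phi_v^{(i)}(T)$, the conjugacy class of $\bT_{v,\lambda}$ in $\prod_i\GL_{n_i}$ (a lattice equipped with $k$ marked multisets, up to isomorphism) is determined by the tuple $(\Phi_v^{(i)})_{i=1}^k\in\prod_iE[T]$, which is independent of $\lambda$ because each $\{\rho_\lambda^{(i)}\}$ is a compatible system.

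Next I arrange $\bT_{v,\lambda}$ to be a maximal torus of $\hat\bG_\lambda$. A full-dimensional subtorus of $\hat\bG_\lambda$ is a maximal torus of $\hat\bG_\lambda^\circ$, so the condition is that $\dim\bT_{v,\lambda}=\mathrm{rank}\,\hat\bG_\lambda$; the left side is the free rank of the subgroup of $\overline E_\lambda^\times$ generated by the roots of $\prod_i\Phi_v^{(i)}$, hence a function of those polynomials and thus $\lambda$-independent, while the right side is $\lambda$-independent by Theorem~\ref{thm_formal bi-char indep}(ii). By Serre's density theorem for Frobenius tori \cite{Ser81} (the mechanism underlying Theorem~\ref{thm_formal bi-char indep}(ii)), places $v$ with $\dim\bT_{v,\lambda}=\mathrm{rank}\,\hat\bG_\lambda$ occur with positive Dirichlet density. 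To compare two indices $\lambda,\lambda'$, pick such a place $v_0$ lying over neither residue characteristic: then the conjugacy classes of maximal tori $\hat\bT_\lambda\subseteq\prod_i\GL_{n_i}$ and $\hat\bT_{\lambda'}\subseteq\prod_i\GL_{n_i}$ equal those of $\bT_{v_0,\lambda}$ and $\bT_{v_0,\lambda'}$ respectively, and these agree by the previous paragraph. As this conjugacy class is a combinatorial invariant built from $(\Phi_{v_0}^{(i)})_i$, it is also unaffected by the base change $\iota_\lambda$. This proves (i).

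For (ii), the same argument establishes (i) for any finite collection of semisimple $E$-rational compatible systems of $K$, so I apply it to the enlarged collection $\{\rho_\lambda^{(i)}\}_i\cup\{\rho_\lambda^{(i)}\otimes(\rho_\lambda^{(i)})^\vee\}_i$ --- again semisimple compatible systems, whose Frobenius characteristic polynomials are computed from the $\Phi_v^{(i)}$ and hence $\lambda$-independent. This shows that the conjugacy class of a maximal torus of the monodromy group of $\bigl(\bigoplus_i\rho_\lambda^{(i)}\bigr)\oplus\bigl(\bigoplus_i\rho_\lambda^{(i)}\otimes(\rho_\lambda^{(i)})^\vee\bigr)$ in $\prod_i\GL_{n_i}\times\prod_i\GL_{n_i^2}$ is $\lambda$-independent; projecting to the two blocks records simultaneously, and in a $\lambda$-independent fashion, the formal character of $\hat\bG_\lambda$ in $\prod_i\GL_{n_i}$ together with that of its adjoint representation in $\prod_i\GL_{n_i^2}$. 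From this weight data the subtorus $\hat\bT_\lambda^{\ss}=\hat\bT_\lambda\cap\hat\bG_\lambda^{\der}$ is recovered by the lattice-saturation argument of \cite[\S3]{Hui13} carried out in each factor of $\prod_i\GL_{n_i}$ (the character lattice of $\hat\bG_\lambda^{\der}$ is cut out of $X^\ast(\hat\bT_\lambda)$ by the weights occurring in the adjoint representation), which yields the conjugacy class of the chain $\hat\bT_{\lambda,\C}^{\ss}\subseteq\hat\bT_{\lambda,\C}$.

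The only genuine obstacle is bookkeeping: checking that passing from $\GL_N$ to the finer group $\prod_i\GL_{n_i}$ costs nothing --- the Frobenius torus is built from one and the same semisimple element, and its conjugacy class in $\prod_i\GL_{n_i}$ is still read off from the (still $\lambda$-independent) tuple $(\Phi_v^{(i)})_i$ --- and spelling out the \cite[\S3]{Hui13} reconstruction of $\hat\bT_\lambda^{\ss}\subseteq\hat\bT_\lambda$ coordinatewise in the product. The external inputs, namely Serre's density theorem for Frobenius tori and the $\lambda$-independence of the ordinary formal (bi-)character, are exactly Theorem~\ref{thm_formal bi-char indep}.
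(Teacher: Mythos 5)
Your argument for part (i) is sound and is essentially the paper's argument in a different packaging: the paper runs Serre's method through the finite morphism $\mathrm{Char}=\prod_i\mathrm{Char}^{(i)}$ on $\prod_i\GL_{n_i}$ (after first reducing to the connected case by a finite base change of $K$ --- a reduction you should also make explicit, since otherwise the Frobenius ``torus'' need not be connected nor meet $\hat\bG_\lambda^\circ$ correctly), while you run it through Frobenius tori; both reduce to the $\lambda$-independence of the tuple $(\Phi_v^{(i)})_i$.

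Part (ii), however, has a genuine gap, and it sits exactly where the real difficulty of the theorem lies. The formal character of $\rho_\lambda^{(i)}\otimes(\rho_\lambda^{(i)})^\vee$ restricted to the monodromy torus consists of the pairwise differences $\chi-\chi'$ of the weights of $\rho_\lambda^{(i)}$; consequently the conjugacy class of a maximal torus of the monodromy group of $\bigl(\bigoplus_i\rho_\lambda^{(i)}\bigr)\oplus\bigl(\bigoplus_i\rho_\lambda^{(i)}\otimes(\rho_\lambda^{(i)})^\vee\bigr)$ inside $\prod_i\GL_{n_i}\times\prod_i\GL_{n_i^2}$ is \emph{already determined} by the conjugacy class of $\hat\bT_\lambda$ in $\prod_i\GL_{n_i}$. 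Your auxiliary system therefore carries no information beyond part (i), and your part (ii) silently reduces to the claim that the formal character determines the formal bi-character. That claim is false in general --- this is precisely why \cite[Theorem 3.19]{Hui13} exists and why it cannot be deduced from Serre's formal-character result alone. Two further points in the same step are incorrect: the monodromy group of $\rho\otimes\rho^\vee$ acts on all of $\End(V)$, not on $\mathrm{Lie}(\bG_\lambda^{\der})$, and the torus data of $\End(V)$ does not tell you which of the weights $\chi-\chi'$ are actually roots of $\bG_\lambda$; and even if the root set $\Phi\subset\X(\hat\bT_\lambda)$ were known, it does not ``cut out'' $\X(\hat\bT_\lambda^{\ss})$ --- for a non-semisimple reductive group one needs the coroots, equivalently the position of the central torus, to locate $\bT^{\ss}$ inside $\bT$, and the roots alone do not determine this.

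What the paper does instead is adjoin a genuinely independent piece of arithmetic input: a faithful representation $\phi$ of the Serre group $S_{\mathfrak m}$ yields an abelian compatible system $\{\phi_\lambda\}$, and for the enlarged system $\{\hat\rho_\lambda\oplus\phi_\lambda\}$ one has, by \cite[Theorem 3.19, Remark 3.22]{Hui13}, that $\hat\bT_{\lambda,\C}^{\ss}$ is recovered as $\mathrm{Ker}(\pi_2|_{\widetilde\bT_{\lambda,\C}})^\circ$, i.e.\ as the connected part of the maximal torus killed by the projection to the abelian (Serre-group) factor. Part (i), applied to this enlarged product, then gives the $\lambda$-independence of the chain. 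If you want to salvage your approach, you must replace $\rho\otimes\rho^\vee$ by an auxiliary system that is not a tensor construction in $\rho$ --- the Serre-group system is the canonical choice.
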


\begin{proof}
We may assume that \eqref{bigcs} is connected by taking a finite extension of $K$
and the chain $\hat\bT_{\lambda,\C}^{\ss}\subset\hat\bT_{\lambda,\C}$
is contained in the diagonal $\prod_{i=1}^k \mathbb{G}_m^{n_i}$ of $\prod_{i=1}^k \GL_{n_i,\C}$
by conjugation.

(i). The idea (as in \cite{Ser81}) is to look at the morphism 
$$\mathrm{Char}:=\prod_{i=1}^k \mathrm{Char}^{(i)}: \prod_{i=1}^k \GL_{n_i,\C}\to \prod_{i=1}^k (\mathbb{G}_a^{n_i-1}\times\mathbb{G}_m)$$
where $\mathrm{Char}^{(i)}(A)=(a_1,...,a_{n_i})$ if $A\in \GL_{n_i}(\C)$ 
and $\det(A-T\cdot \mathrm{id})=\sum_{j=0}^{n_i} a_j T^{n_i-j}$.
By the compatibility conditions \ref{csdef}(a),(b) and Chebotarev's density theorem, 
$\mathrm{Char}(\hat\bG_{\lambda,\C})$ is independent of $\lambda$.
Since $\hat\bT_{\lambda,\C}$ is a maximal torus of the connected $\hat\bG_{\lambda,\C}$,
it follows that $\mathrm{Char}(\hat\bT_{\lambda,\C})=\mathrm{Char}(\hat\bG_{\lambda,\C})$
is independent of $\lambda$. Together with the facts that 
$\hat\bT_{\lambda,\C}\subset \prod_{i=1}^k \mathbb{G}_m^{n_i}$ (the diagonal) 
is a closed irreducible subvariety
and the restriction $\mathrm{Char}|_{\prod_{i=1}^k \mathbb{G}_m^{n_i}}$ is a finite morphism,
we obtain assertion (i).

(ii). This part follows closely the idea behind \cite[Theorem 3.19]{Hui13}. 
Let $\mathfrak{m}$ be a modulus of $K$ and $S_\mathfrak{m}$ be the 
Serre group\footnote{The Serre group $S_\mathfrak{m}$ is a diagonalizable group defined over $\Q$ and its dimension depends
only on $K$, i.e., independent of $\mathfrak m$.} of $K$ with respect $\mathfrak{m}$. 
Associated to $S_\mathfrak{m}$ a family of abelian $\ell$-adic representations
$$\{\alpha_\ell:\Gal_K\to S_\mathfrak{m}(\Q_\ell)\}_{\ell\in\Sigma_\Q}$$
such that for any $E$-morphism $\phi:S_{\mathfrak{m},E}\to \GL_{m,E}$, the composition 
$$\{\phi_\lambda: \Gal_K\stackrel{\alpha_\ell}{\rightarrow}S_\mathfrak{m}(E_\lambda)
\stackrel{\phi\otimes_E E_\lambda}{\longrightarrow}\GL_m(E_\lambda)\subset\GL_m(\overline E_\lambda)\}_{\lambda\in\Sigma_E}$$
is an abelian semisimple $E$-rational compatible system \cite[Chapter II]{Ser98}.

Assume $\phi$ is now faithful. 
Consider the semisimple compatible system $\{\hat\rho_\lambda\oplus\phi_\lambda\}$ (a direct 
sum of $k+1$ compatible systems) and let 
$$\{\widetilde\bG_\lambda\subset (\prod_{i=1}^k \GL_{n_i,\overline E_\lambda})\times\GL_{m,\overline E_\lambda}\}$$
be the system of algebraic monodromy groups. 
For all $\lambda$, let $\widetilde\bT_\lambda$ 
a maximal torus of $\widetilde\bG_\lambda$.
It follows from assertion (i) that 
\begin{equation}\label{words}
\text{the conjugacy class of $\widetilde\bT_{\lambda,\C}$
in $(\prod_{i=1}^k \GL_{n_i,\C})\times\GL_{m,\C}$ is independent of $\lambda$}.
\end{equation}
Let $\pi_1$ (resp. $\pi_2$) be the projection map of $(\prod_{i=1}^k \GL_{n_i,\C})\times\GL_{m,\C}$
to the first $k$ factors (resp. to the last factor).
According to \cite[Remark 3.22]{Hui13} and the proof of \cite[Theorem 3.19]{Hui13}, the chain
\begin{equation}\label{refinechain}
\mathrm{Ker}(\pi_2|_{\widetilde\bT_{\lambda,\C}})^\circ\subset \pi_1(\widetilde\bT_{\lambda,\C})
\end{equation}
is the formal bi-character of $\pi_1(\widetilde\bG_{\lambda,\C})=\hat\bG_{\lambda,\C}$ for all $\lambda$.
By \eqref{words}, the conjugacy class of \eqref{refinechain} in $\prod_{i=1}^k \GL_{n_i,\C}$
is independent of $\lambda$.
\end{proof}

Suppose $k=2$ in \eqref{bigcs} and denote by $\pi_1:\GL_{n_1,\overline E_\lambda}\times\GL_{n_2,\overline E_\lambda}\to \GL_{n_1,\overline E_\lambda}$
the projection to the first factor.

\begin{cor}\label{oneiso}
If $\pi_1:\hat\bG_{\lambda_0}\to \pi_1(\hat\bG_{\lambda_0})$ is an isomorphism for some $\lambda_0$, then the surjection
\begin{equation}\label{proj1}
\pi_1:\hat\bG_{\lambda}\to \pi_1(\hat\bG_{\lambda})
\end{equation}
 is an isomorphism for all $\lambda$.
\end{cor}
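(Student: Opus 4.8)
The plan is to reduce the claim to two $\lambda$-independence statements that are already available: the conjugacy class of a maximal torus inside $\prod_i\GL_{n_i,\C}$ (Theorem \ref{refine}(i)) and the order of the component group of an algebraic monodromy group (Theorem \ref{thm_formal bi-char indep}(i)). Write $\hat\bG_\lambda^{(1)}:=\pi_1(\hat\bG_\lambda)$. Since $\pi_1$ is a homomorphism of algebraic groups, its image is closed and equals the algebraic monodromy group of the semisimple compatible system $\{\rho_\lambda^{(1)}:=\pi_1\circ\hat\rho_\lambda\}$; and $\hat\bG_\lambda$ is reductive because $\{\hat\rho_\lambda\}$ is semisimple. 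Base change along $\iota_\lambda$ affects none of the relevant invariants, and in characteristic $0$ the surjection $\pi_1\colon\hat\bG_\lambda\twoheadrightarrow\hat\bG_\lambda^{(1)}$ is an isomorphism if and only if $\ker(\pi_1|_{\hat\bG_\lambda})=1$. So it suffices to show that the vanishing of this kernel is independent of $\lambda$.

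First I would treat the identity component. As $\hat\bG_\lambda^\circ$ is connected reductive, a standard fact lets one test injectivity on a maximal torus: $(\ker(\pi_1|_{\hat\bG_\lambda^\circ}))^\circ$ is a connected normal subgroup of a connected reductive group, hence reductive, so if it were nontrivial it would contain a nontrivial torus lying in some maximal torus $\hat\bT_\lambda$ of $\hat\bG_\lambda^\circ$, contradicting injectivity of $\pi_1|_{\hat\bT_\lambda}$; thus $\ker(\pi_1|_{\hat\bG_\lambda^\circ})$ is finite, hence central, hence contained in $\hat\bT_\lambda$, hence trivial. Now $\ker(\pi_1|_{\hat\bT_\lambda})=\hat\bT_\lambda\cap(\{1\}\times\GL_{n_2})$, and whether this is trivial depends only on the conjugacy class of $\hat\bT_\lambda$ in $\GL_{n_1}\times\GL_{n_2}$, because conjugation by $(g_1,g_2)$ preserves $\{1\}\times\GL_{n_2}$. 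Since $\hat\bT_\lambda$ is also a maximal torus of $\hat\bG_\lambda$, Theorem \ref{refine}(i) makes that conjugacy class $\lambda$-independent over $\C$; and the hypothesis that $\pi_1|_{\hat\bG_{\lambda_0}}$ is an isomorphism forces its restriction to $\hat\bG_{\lambda_0}^\circ$ to be one. Hence $\pi_1|_{\hat\bG_\lambda^\circ}$ is an isomorphism onto its image for every $\lambda$.

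It then remains to control the non-identity components, which I expect to be the only point needing care. Granted $\ker(\pi_1|_{\hat\bG_\lambda^\circ})=1$, the finite group $\ker(\pi_1|_{\hat\bG_\lambda})$ meets $\hat\bG_\lambda^\circ$ trivially and so embeds into $\hat\bG_\lambda/\hat\bG_\lambda^\circ$; comparing with the surjection of component groups $\hat\bG_\lambda/\hat\bG_\lambda^\circ\twoheadrightarrow\hat\bG_\lambda^{(1)}/(\hat\bG_\lambda^{(1)})^\circ$ induced by $\pi_1$, one identifies $\ker(\pi_1|_{\hat\bG_\lambda})$ with the kernel of that map, whence
\[
\bigl|\ker(\pi_1|_{\hat\bG_\lambda})\bigr|=\frac{|\hat\bG_\lambda/\hat\bG_\lambda^\circ|}{|\hat\bG_\lambda^{(1)}/(\hat\bG_\lambda^{(1)})^\circ|}.
\]
By Theorem \ref{thm_formal bi-char indep}(i) applied to the semisimple compatible systems $\{\hat\rho_\lambda\}$ and $\{\rho_\lambda^{(1)}\}$, both numerator and denominator are independent of $\lambda$, so $|\ker(\pi_1|_{\hat\bG_\lambda})|$ is independent of $\lambda$; being $1$ at $\lambda_0$, it is $1$ for all $\lambda$, which is the corollary. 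Equivalently, one may first pass to a finite extension $L/K$ over which all $\hat\bG_\lambda$ become connected (as in the remark following Theorem \ref{thm_formal bi-char indep}), apply the second paragraph there, and then use this component count to descend the conclusion back to $K$.
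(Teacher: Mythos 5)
Your argument is correct and follows essentially the same route as the paper's: both reduce the problem to injectivity of $\pi_1$ on a maximal torus (which is $\lambda$-independent by Theorem \ref{refine}(i)) together with the $\lambda$-independence of component groups from Theorem \ref{thm_formal bi-char indep}(i), using the standard facts that a finite normal subgroup of a connected reductive group is central and hence lies in every maximal torus. The only cosmetic difference is that the paper gets finiteness of the kernel from rank independence (Theorem \ref{thm_formal bi-char indep}(ii)) and places it inside $\hat\bG_\lambda^\circ$ via the component count, whereas you derive finiteness directly from torus injectivity and then count $|\ker|$ through the component-group surjection; both are sound.
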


\begin{proof}
First, note that $\pi_1(\hat\bG_{\lambda})$ is the algebraic monodromy group of $\rho_\lambda^{(1)}$ for all $\lambda$.
Then the condition that \eqref{proj1} is an isomorphism at $\lambda_0$ and 
Theorem \ref{thm_formal bi-char indep}(i),(ii) imply that the ranks (resp. numbers of components) of $\hat\bG_{\lambda}$ 
and $\pi_1(\hat\bG_{\lambda})$ are equal for each $\lambda$.
Thus, the kernel $\bC_\lambda$ of the surjection \eqref{proj1} is a finite normal subgroup of $\hat\bG_{\lambda}^\circ$ for each $\lambda$.
We obtain that $\bC_\lambda\subset \hat\bT_{\lambda}$ (a maximal torus of $\hat\bG_{\lambda}$) for each $\lambda$.
Since the restriction of \eqref{proj1} to $\hat\bT_{\lambda}$ is injective at $\lambda=\lambda_0$, 
this is also true for all $\lambda$ by Theorem \ref{refine}(i).
Therefore,  $\bC_\lambda$ is trivial for all $\lambda$.
\end{proof}

\subsection{Invariance of roots and $\lambda$-independence}\label{s3.7}
Suppose $\{\rho_\lambda:\Gal_K\to\GL_n(\overline E_\lambda)\}$ is 
a connected semisimple $E$-rational compatible system with algebraic monodromy groups $\{\bG_\lambda\}$. 
After the base change $\iota_\lambda:\overline E_\lambda\stackrel{\simeq}{\rightarrow}\C$
for all $\lambda\in\Sigma_E$, we obtain a system of complex connected
reductive subgroups 
$$\{\bG_{\lambda,\C}\subset\GL_{n,\C}\}.$$
Up to $\GL_{n,\C}$-conjugation, there exists a chain of subtori
\begin{equation}\label{commonchain}
\bT_\C^{\ss}\subset\bT_\C
\end{equation}
in $\GL_{n,\C}$ such that $\bT_\C$ is a maximal torus of $\bG_{\lambda,\C}$ for all $\lambda$
and $\bT_\C^{\ss}$ is a maximal torus of $\bG_{\lambda,\C}^{\der}$ for all $\lambda$
by Theorem \ref{thm_formal bi-char indep}(iii). 
We give a crucial criterion, called the \emph{invariance of roots}, 
for the connected reductive subgroups $\{\bG_{\lambda,\C}\}$ to be conjugate in $\GL_{n,\C}$.
Denote by $\X(\bT_\C^{\ss}):=\mathrm{Hom}(\bT_\C^{\ss},\mathbb{G}_m)$ the character group of $\bT_\C^{\ss}$.

\begin{prop}\label{invroot}\cite[Corollary 3.9]{Hui18}
For two primes $\lambda_1$ and $\lambda_2\in \Sigma_E$,
let $\Phi_1\subset \X(\bT_\C^{\ss})$ and $\Phi_2\subset \X(\bT_\C^{\ss})$ be the root systems
of the semisimple groups $\bG_{\lambda_1,\C}^{\der}$ and $\bG_{\lambda_2,\C}^{\der}$
with respect to the common maximal torus $\bT_\C^{\ss}$ in \eqref{commonchain}.
If $\Phi_1=\Phi_2$, then $\bG_{\lambda_1,\C}$ and $\bG_{\lambda_2,\C}$
are conjugate in $\GL_{n,\C}$. 
\end{prop}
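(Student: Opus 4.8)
\emph{Sketch of proof.} The plan is to reconstruct the faithful reductive pair $\bG_{\lambda,\C}\hookrightarrow\GL_{n,\C}$ from data known to be $\lambda$-independent together with the root system, matching first the semisimple parts and then the central tori. By Theorem~\ref{thm_formal bi-char indep}(iii) we may, after conjugating in $\GL_{n,\C}$, take the chain $\bT_\C^{\ss}\subset\bT_\C$ of \eqref{commonchain} to be a genuine common chain, so that $\bT_\C$ is a maximal torus of both $\bG_{\lambda_1,\C}$ and $\bG_{\lambda_2,\C}$ and $\bT_\C^{\ss}$ is a maximal torus of both $\bG_{\lambda_1,\C}^{\der}$ and $\bG_{\lambda_2,\C}^{\der}$; consequently the multiset of weights of $\bT_\C$ on $\C^n$, and hence its restriction $\Xi^{\ss}$ to $\bT_\C^{\ss}$, is the same for $\lambda_1$ and $\lambda_2$. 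We must produce $g\in\GL_{n,\C}$ with $g\bG_{\lambda_1,\C}g^{-1}=\bG_{\lambda_2,\C}$.

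\textbf{Matching the derived groups.} Since $\bT_\C^{\ss}$ is fixed, so are the lattices $\X(\bT_\C^{\ss})$ and $\X_*(\bT_\C^{\ss})$, and in any connected reductive group the coroot $\alpha^{\vee}$ is determined by the root $\alpha$. Hence the hypothesis $\Phi_1=\Phi_2=:\Phi$ forces $\bG_{\lambda_1,\C}^{\der}$ and $\bG_{\lambda_2,\C}^{\der}$ to carry one and the same root datum $\bigl(\X(\bT_\C^{\ss}),\Phi,\X_*(\bT_\C^{\ss}),\Phi^{\vee}\bigr)$ relative to the common torus $\bT_\C^{\ss}$. By the Isomorphism Theorem for split connected reductive groups there is an algebraic isomorphism $f\colon\bG_{\lambda_1,\C}^{\der}\xrightarrow{\ \sim\ }\bG_{\lambda_2,\C}^{\der}$ restricting to the identity on $\bT_\C^{\ss}$. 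Transporting the tautological embedding of $\bG_{\lambda_1,\C}^{\der}$ through $f^{-1}$ produces a complex representation of $\bG_{\lambda_2,\C}^{\der}$ on $\C^n$ whose formal character relative to $\bT_\C^{\ss}$ equals $\Xi^{\ss}$, which is also the formal character of the tautological embedding of $\bG_{\lambda_2,\C}^{\der}$. Since a finite-dimensional complex representation of a fixed reductive group is determined up to isomorphism by its formal character (equivalently, by its character, via linear independence of irreducible characters), these two representations are conjugate: there is $g\in\GL_{n,\C}$ with $gxg^{-1}=f(x)$ for all $x\in\bG_{\lambda_1,\C}^{\der}$. In particular $g$ centralizes $\bT_\C^{\ss}$ and $g\bG_{\lambda_1,\C}^{\der}g^{-1}=\bG_{\lambda_2,\C}^{\der}$.

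\textbf{Matching the central tori, and the main obstacle.} Replacing $\bG_{\lambda_1,\C}$ by $g\bG_{\lambda_1,\C}g^{-1}$, we may assume $\bG_{\lambda_1,\C}^{\der}=\bG_{\lambda_2,\C}^{\der}=:\bH$ inside $\GL_{n,\C}$, with $\bT_\C^{\ss}$ still a common maximal torus of $\bH$ (as $g$ centralizes it). Writing $\bG_{\lambda_i,\C}=\bH\cdot\bZ_i$ with $\bZ_i:=Z(\bG_{\lambda_i,\C})^{\circ}$, we have $\bZ_i\subset\mathbf{A}:=Z_{\GL_{n,\C}}(\bH)\cong\prod_j\GL(M_j)$, where $\C^n=\bigoplus_j W_j\otimes M_j$ is the $\bH$-isotypic decomposition (which depends only on $\bH\subset\GL_{n,\C}$, hence is fixed); thus $\bZ_i$ acts only on the multiplicity spaces $M_j$. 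The $\lambda$-independent formal character $\Xi$ together with the now-fixed $\bH$-module structure determines the weight data of $\bZ_i$ on each $M_j$, and the $\lambda$-independence of the formal bi-character determines how these are assembled, i.e.\ the $\GL_{n,\C}$-conjugacy class of the chain $\bT_\C^{\ss}\subset\bT_\C^{\ss}\cdot\bZ_i$. From this one deduces that $\bZ_1$ and $\bZ_2$ are conjugate by some $h\in\mathbf{A}$; since $h$ centralizes $\bH$ it conjugates $\bG_{\lambda_1,\C}=\bH\cdot\bZ_1$ onto $\bH\cdot\bZ_2=\bG_{\lambda_2,\C}$, completing the proof. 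The first half (matching the derived groups) is a soft consequence of the classification of reductive groups by root data and of the rigidity of complex representations; the genuinely delicate point is this last deduction --- turning the $\lambda$-independence of the formal bi-character into a workable comparison of the tori $\bZ_1,\bZ_2$ inside $\prod_j\GL(M_j)$, and checking that the conjugating element is compatible with the identification of derived groups already made. This is exactly what the structural analysis of \cite[\S3]{Hui18} supplies (a refinement of the formal-bi-character $\lambda$-independence that simultaneously controls the $\bG_\lambda$-isotypic decomposition of $\C^n$ and the torus action on the multiplicity spaces), and is where the real work lies.
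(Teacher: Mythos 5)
The paper itself gives no proof of this proposition --- it is quoted directly from \cite[Corollary 3.9]{Hui18} --- so your attempt has to be measured against the argument in that reference rather than against anything in the present text. With that said, your first half (matching the derived groups) is correct and essentially complete: for a \emph{semisimple} group the coroots are determined by the root system inside $\X(\bT_\C^{\ss})$ (the roots span $\X(\bT_\C^{\ss})\otimes\Q$, so $s_\alpha$ and hence $\alpha^\vee$ are pinned down), so $\Phi_1=\Phi_2$ forces equal root data; the isomorphism theorem gives $f$ restricting to the identity on $\bT_\C^{\ss}$; the two resulting representations of $\bG_{\lambda_1,\C}^{\der}$ have the same formal character by Theorem~\ref{thm_formal bi-char indep}(iii) and are therefore isomorphic; and any intertwiner automatically centralizes $\bT_\C^{\ss}$, as you observe.

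The gap is in the second half. The assertion ``from this one deduces that $\bZ_1$ and $\bZ_2$ are conjugate by some $h\in\mathbf{A}$'' is precisely the remaining content of the proposition once the derived groups have been identified, and you do not prove it --- you explicitly defer it to \cite[\S 3]{Hui18}. To see why it is not a formality: two subtori of $\mathbf{A}=\prod_j\GL(M_j)$ are conjugate in $\mathbf{A}$ if and only if some isomorphism between them matches their weight multisets on each multiplicity space $M_j$ \emph{separately}, whereas the $\lambda$-independence of the formal bi-character only says that the chains $\bT_\C^{\ss}\subset\bT_\C^{\ss}\bZ_i$ are conjugate by some element of $\GL_{n,\C}$, which a priori need not respect the $\bH$-isotypic decomposition of $\C^n$ (distinct isotypic components can carry overlapping $\bT_\C^{\ss}$-weights, so the joint weight data of the chain does not obviously disassemble into per-$M_j$ data for $\bZ_i$). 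There is also a smaller unaddressed point: after conjugating by your $g$, the common maximal torus $\bT_\C$ of \eqref{commonchain} is moved to $g\bT_\C g^{-1}$, which need not equal $\bT_\C$, so one must re-justify that the two chains being compared are still in the normalized common position. Supplying these steps is exactly the work done in \cite[Corollary 3.9]{Hui18}; as written, your proposal is a correct outline whose decisive step is provided by citation rather than by argument.
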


The main strategy for Theorem \ref{thm_main2}
is to construct some auxiliary compatible system $\{\phi_\lambda\}$
so that the invariance of roots criterion is fulfilled.

\begin{prop}\label{strategy}
Let $\{\rho_\lambda:\Gal_K\to\GL_n(\overline E_\lambda)\}$ be
a connected semisimple $E$-rational compatible system with algebraic monodromy groups $\{\bG_\lambda\subset\GL_{n,\overline E_\lambda}\}$. 
Let $\{\phi_\lambda:\Gal_K\to\GL_m(\overline E_\lambda)\}$ be a semisimple $E$-rational compatible system 
and consider the compatible system $\{\rho_\lambda\oplus\phi_\lambda\}$ with 
algebraic monodromy groups $\{\hat\bG_\lambda\subset\GL_{n,\overline E_\lambda}\times \GL_{m,\overline E_\lambda}\}$.
Let $\pi_1$ (resp. $\pi_2$) be the projection map of $\GL_{n,\overline E_\lambda}\times \GL_{m,\overline E_\lambda}$
to the first (resp. second) factor.
Suppose the following conditions hold.
\begin{enumerate}[(a)]
\item For all $\lambda$ (equivalently, for some $\lambda$), the first projection $\pi_1: \hat\bG_\lambda\to \bG_\lambda$ is an isomorphism.
\item For all $\lambda$, the composition $\bG_\lambda\stackrel{\pi_1^{-1}}{\longrightarrow}\hat\bG_\lambda\stackrel{\pi_2}{\rightarrow}\GL_{m,\overline E_\lambda}$ is the adjoint representation on the semisimple part $\mathrm{Lie}(\bG_\lambda^{\der})$,
where $\pi_1^{-1}$ means the inverse of the isomorphism in (a).
\end{enumerate}
After the base change $\overline E_\lambda\simeq \C$, 
the conjugacy class of $\bG_{\lambda,\C}$ in $\GL_{n,\C}$ is independent of $\lambda$.
\end{prop}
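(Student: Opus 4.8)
The plan is to check the hypotheses of the invariance of roots criterion (Proposition~\ref{invroot}) for the system of connected reductive subgroups $\{\bG_{\lambda,\C}\subset\GL_{n,\C}\}$; the extra ingredient is Theorem~\ref{refine}(ii), applied to the direct sum compatible system $\{\rho_\lambda\oplus\phi_\lambda\}$, which records how a maximal torus of $\hat\bG_\lambda$ sits inside \emph{both} factors of $\GL_{n,\C}\times\GL_{m,\C}$ at once.

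First I would fix the data. By Theorem~\ref{thm_formal bi-char indep}(iii) we may conjugate so that a single chain $\bT_\C^{\ss}\subset\bT_\C$ as in \eqref{commonchain} is, for every $\lambda$, a maximal torus of $\bG_{\lambda,\C}^{\der}$ inside a maximal torus of $\bG_{\lambda,\C}$; let $\Phi_\lambda\subset\X(\bT_\C^{\ss})$ be the corresponding root system of $\bG_{\lambda,\C}^{\der}$. By Proposition~\ref{invroot} it is enough to show $\Phi_{\lambda_1}=\Phi_{\lambda_2}$ for any $\lambda_1,\lambda_2$, where we are free to first replace $\bG_{\lambda_2,\C}$ by any $\GL_{n,\C}$-conjugate still carrying the chain $\bT_\C^{\ss}\subset\bT_\C$. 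Next, conditions (a) and (b) say precisely that $\phi_\lambda=\ad\circ\rho_\lambda$, where $\ad:\bG_\lambda\to\GL_{m,\overline E_\lambda}$ is the adjoint action on $\mathrm{Lie}(\bG_\lambda^{\der})$ (so in particular $m=\dim\mathrm{Lie}(\bG_\lambda^{\der})$), and that $\pi_1$ is an isomorphism $\hat\bG_{\lambda,\C}\xrightarrow{\sim}\bG_{\lambda,\C}$. Choosing $\hat\bT_{\lambda,\C}:=\pi_1^{-1}(\bT_\C)$ and $\hat\bT_{\lambda,\C}^{\ss}:=\pi_1^{-1}(\bT_\C^{\ss})$ gives a maximal torus of $\hat\bG_{\lambda,\C}$ and of $\hat\bG_{\lambda,\C}^{\der}$ whose images under $\pi_1$ are exactly $\bT_\C$ and $\bT_\C^{\ss}$, while $\pi_2$ restricted to $\hat\bT_{\lambda,\C}^{\ss}$ is, via $\pi_1^{-1}$, the torus representation $\ad|_{\bT_\C^{\ss}}$; the point to use is that its multiset of weights in $\X(\bT_\C^{\ss})$ is $\Phi_\lambda$ together with the zero weight repeated $\dim\bT_\C^{\ss}$ times.

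The key step is Theorem~\ref{refine}(ii) applied to the $k=2$ system $(\rho_\lambda^{(1)},\rho_\lambda^{(2)})=(\rho_\lambda,\phi_\lambda)$: the conjugacy class of the chain $\hat\bT_{\lambda,\C}^{\ss}\subset\hat\bT_{\lambda,\C}$ in $\GL_{n,\C}\times\GL_{m,\C}$ does not depend on $\lambda$. For $\lambda_1,\lambda_2$, pick $(g_1,g_2)\in\GL_{n,\C}\times\GL_{m,\C}$ carrying the $\lambda_1$-chain to the $\lambda_2$-chain. In the first factor both chains are literally $\bT_\C^{\ss}\subset\bT_\C$, so $g_1\in N_{\GL_{n,\C}}(\bT_\C)\cap N_{\GL_{n,\C}}(\bT_\C^{\ss})$, and it induces an automorphism $w$ of $\X(\bT_\C^{\ss})$. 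Compatibility in the second factor then forces $g_2$ to intertwine, through $w$, the two torus representations $\ad|_{\bT_\C^{\ss}}$ attached to $\lambda_1$ and $\lambda_2$; comparing weights and using $w(0)=0$, this gives $w(\Phi_{\lambda_2})=\Phi_{\lambda_1}$. Finally, since $g_1\in N_{\GL_{n,\C}}(\bT_\C)$ preserves the chain \eqref{commonchain}, conjugating $\bG_{\lambda_2,\C}$ by a representative of $g_1^{-1}$ preserves that chain and turns its root system into $w(\Phi_{\lambda_2})=\Phi_{\lambda_1}$; now this conjugate of $\bG_{\lambda_2,\C}$ and $\bG_{\lambda_1,\C}$ share both the chain $\bT_\C^{\ss}\subset\bT_\C$ and the root system $\Phi_{\lambda_1}$, so Proposition~\ref{invroot} makes them $\GL_{n,\C}$-conjugate, whence $\bG_{\lambda_1,\C}$ and $\bG_{\lambda_2,\C}$ are conjugate in $\GL_{n,\C}$.

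The main obstacle I anticipate is the bookkeeping in that last step: one must extract from the $\lambda$-independence of the conjugacy class of $\hat\bT_{\lambda,\C}^{\ss}\subset\hat\bT_{\lambda,\C}$ \emph{inside the product} the precise statement that the coordinate transformation $g_1$ which is compatible with the second-factor conjugation already carries $\Phi_{\lambda_2}$ onto $\Phi_{\lambda_1}$, and then observe that $g_1$, being an honest normalizer of the chain in $\GL_{n,\C}$, can be absorbed before invoking the invariance of roots. The remaining points --- that $\{\rho_\lambda\oplus\phi_\lambda\}$ is again a semisimple $E$-rational compatible system, the weight description of the adjoint representation, and the reductivity/connectedness bookkeeping --- are routine.
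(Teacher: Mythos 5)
Your proposal is correct and uses exactly the same two key ingredients as the paper's proof: Theorem~\ref{refine}(ii) applied to $\{\rho_\lambda\oplus\phi_\lambda\}$ together with Proposition~\ref{invroot}, with the root system of $\bG_{\lambda,\C}^{\der}$ recovered as the nonzero weights of $\pi_2$ on the torus chain via condition (b). The only difference is cosmetic: the paper conjugates so that a single chain $\hat\bT_\C^{\ss}\subset\hat\bT_\C$ in the product works for \emph{all} $\lambda$ and then takes \eqref{commonchain} to be its $\pi_1$-image, so the weight multiset of $\pi_2$ is literally $\lambda$-independent and the intertwining element $w$ you track never appears.
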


\begin{proof}
In the condition (a), Corollary \ref{oneiso} implies that the assertion
``$\pi_1: \hat\bG_\lambda\to \bG_\lambda$ is an isomorphism'' is independent of $\lambda$.

Up to $\GL_{n,\C}\times \GL_{m,\C}$-conjugation, there exists a chain of subtori
$$\hat\bT_{\C}^{\ss}\subset\hat\bT_{\C}$$
in $\GL_{n,\C}\times \GL_{m,\C}$ such that $\hat\bT_{\C}$ (resp. $\hat\bT_{\C}^{\ss}$)
is a maximal torus of $\hat\bG_{\lambda,\C}$ (resp. $\hat\bG_{\lambda,\C}^{\der}$) for all $\lambda$ by Theorem \ref{refine}(ii). 
Then we may take the chain \eqref{commonchain} as
$$\pi_1(\hat\bT_{\C}^{\ss})\subset\pi_1(\hat\bT_{\C}).$$
By (b), the root system of $\bG_{\lambda,\C}^{\der}$ with respect to $\pi_1(\hat\bT_{\C}^{\ss})$
is given by the weights of
$$\pi_1(\hat\bT_{\C}^{\ss})\stackrel{\pi_1^{-1}}{\longrightarrow} \hat\bT_{\C}^{\ss}\stackrel{\pi_2}{\longrightarrow}\GL_{m,\C},$$
which is independent of $\lambda$. Therefore, the conjugacy class of $\bG_{\lambda,\C}$ in $\GL_{n,\C}$ is independent of $\lambda$ 
by Proposition \ref{invroot}.
\end{proof}

\subsection{Hodge-Tate lift}\label{s3.8}
Let $F$ be a finite field extension of $\Q_\ell$ and $\bH$ a linear algebraic group defined over 
$\overline{\mathbb Q}_{\ell}$ in this subsection. 
An $\ell$-adic representation $\rho: \mathrm{Gal}_{F} \to \bH(\overline{\mathbb Q}_{\ell})$ is said to be \emph{Hodge-Tate} 
if for any representation $\bH \to \mathrm{GL}_{k,\overline{\mathbb Q}_{\ell}}$, the composition
$$\mathrm{Gal}_{F} \stackrel{\rho}{\rightarrow} \bH(\overline{\mathbb Q}_{\ell}) \to \mathrm{GL}_k(\overline{\mathbb Q}_{\ell})$$ 
is a Hodge-Tate representation.

\begin{prop}{\cite[Corollary 3.2.12]{Pat19}}\label{prop_Hodge-Tate lift}
    Let $\bH' \rightarrow \bH$ be a surjection of $\overline\Q_\ell$-linear algebraic groups whose kernel is a central torus
		and $\rho: \mathrm{Gal}_{F} \to \bH(\overline{\mathbb Q}_{\ell})$ be a Hodge-Tate representation of $F$. Then there exists a Hodge-Tate lift $\tilde{\rho}:\mathrm{Gal}_{F} \to \bH'(\overline{\mathbb Q}_{\ell})$ so that the following diagram commutes.
    \begin{equation*}\label{liftdiagram}
		\begin{tikzcd}
	& {\bH'(\overline{\mathbb Q}_{\ell})} \\
	{\mathrm{Gal}_{F}} & {\bH(\overline{\mathbb Q}_{\ell})}
	\arrow[two heads, from=1-2, to=2-2]
	\arrow["{\tilde{\rho}}", dashed, from=2-1, to=1-2]
	\arrow["\rho"', from=2-1, to=2-2]
\end{tikzcd}
\end{equation*}
\end{prop}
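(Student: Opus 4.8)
The statement is \cite[Corollary~3.2.12]{Pat19}; the plan of proof is as follows. Write the extension as $1 \to \bZ \to \bH' \xrightarrow{\pi} \bH \to 1$, so that $\bZ = \ker(\pi)$ is a central torus, necessarily split over the algebraically closed field $\overline{\mathbb{Q}}_{\ell}$, say $\bZ \cong \G_m^r$. The idea is to produce $\tilde\rho$ in two stages. First, construct \emph{any} continuous homomorphism $\tilde\rho_0 : \Gal_F \to \bH'(\overline{\mathbb{Q}}_{\ell})$ with $\pi \circ \tilde\rho_0 = \rho$, forgetting $\ell$-adic Hodge theory. Second, multiply $\tilde\rho_0$ by a suitable continuous character $\chi : \Gal_F \to \bZ(\overline{\mathbb{Q}}_{\ell})$ so that $\tilde\rho := \tilde\rho_0 \cdot \chi$ is Hodge--Tate. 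The pointwise product $\tilde\rho_0 \cdot \chi$ is again a homomorphism because $\chi$ takes central values, and it still lifts $\rho$ because $\chi$ is $\bZ$-valued.

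\emph{Existence of a continuous lift.} Choosing a continuous set-theoretic section of $\bH'(\overline{\mathbb{Q}}_{\ell}) \to \bH(\overline{\mathbb{Q}}_{\ell})$ near the identity and pulling back along $\rho$ produces an obstruction class in $H^2_{\mathrm{cont}}(\Gal_F, \bZ(\overline{\mathbb{Q}}_{\ell}))$ with \emph{trivial} coefficients, since $\bZ$ is central. As this group does not vanish outright, the plan is to restrict to the cyclotomic tower $F_\infty := F(\mu_{\ell^\infty})$: the group $\Gal_{F_\infty}$ has $\ell$-cohomological dimension at most $1$, so $\rho|_{\Gal_{F_\infty}}$ lifts to a continuous $\tilde\rho_\infty$, after which one descends to $\Gal_F$, using that $\Gal(F_\infty/F)$ is virtually $\Z_\ell$ and that the lift is only well defined up to a $\bZ$-valued character, so the residual obstruction can be absorbed. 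I expect this step — essentially a local avatar of Tate's lifting theorem, and exactly what \cite[Corollary~3.2.12]{Pat19} and its prerequisites establish — to be the main obstacle.

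\emph{Upgrading to Hodge--Tate.} Let $\tilde\rho_0$ be a continuous lift and $\Theta_0 \in \mathrm{Lie}(\bH')$ its Sen operator, well defined up to conjugacy with one copy for each embedding $\tau : F \hookrightarrow \overline{\mathbb{Q}}_{\ell}$. It is automatically semisimple: under $d\pi$ its nilpotent part maps to the nilpotent part of the Sen operator $\Theta_\rho$ of $\rho$, which is $0$ because $\rho$ is Hodge--Tate, so the nilpotent part lies in the toral subalgebra $\mathrm{Lie}(\bZ)$ and hence vanishes. Since $\rho$ is Hodge--Tate, $\Theta_\rho = d\bar\mu(1)$ for an integral cocharacter $\bar\mu : \G_m \to \bH$; picking a maximal torus $\bT' \supseteq \bZ$ of $\bH'$ and using the exact sequence $0 \to X_*(\bZ) \to X_*(\bT') \to X_*(\pi(\bT')) \to 0$ of cocharacter lattices, lift $\bar\mu$ to a cocharacter $\tilde\mu : \G_m \to \bH'$. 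Then $\delta := \Theta_0 - d\tilde\mu(1)$ maps to $0$ in $\mathrm{Lie}(\bH)$, so for each $\tau$ it is a genuine $\overline{\mathbb{Q}}_{\ell}$-point of $\mathrm{Lie}(\bZ)$. The Hodge--Tate--Sen weight map from continuous characters $\Gal_F \to \bZ(\overline{\mathbb{Q}}_{\ell})$ to $\prod_\tau \mathrm{Lie}(\bZ)$ is surjective — one realizes prescribed weights one coordinate at a time by applying $\exp$ to the local Artin symbol on a sufficiently small open subgroup of $\mathcal{O}_F^\times$ and extending to $\Gal_F$, using divisibility of $\overline{\mathbb{Q}}_{\ell}^\times$ — so there is a continuous $\chi$ with Sen operator $-\delta$. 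The Sen operator being additive under twisting by a central character, $\tilde\rho = \tilde\rho_0 \cdot \chi$ has Sen operator $d\tilde\mu(1)$; composing with any representation $r : \bH' \to \GL_k$ gives a Galois representation $r \circ \tilde\rho$ with Sen operator $d(r \circ \tilde\mu)(1)$, diagonalisable with integer eigenvalues, hence Hodge--Tate. Thus $\tilde\rho$ is the required Hodge--Tate lift.
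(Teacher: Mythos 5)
The paper does not actually prove this proposition: it is quoted verbatim from \cite[Corollary 3.2.12]{Pat19}, so there is no internal argument to compare yours against. Judged on its own terms, your two-stage strategy (produce \emph{some} continuous lift, then twist by a $\bZ$-valued character to repair the Hodge--Tate property) is exactly the strategy of the cited source, and your second stage is essentially complete: the nilpotent part of $\Theta_0$ dies because it lands in $\mathrm{Lie}(\bZ)$; the discrepancy $\delta_\tau=\Theta_{0,\tau}-d\tilde\mu_\tau(1)$ is central and hence unambiguous; a continuous character with arbitrary prescribed Sen weights in $\mathrm{Lie}(\bZ)$ exists by the Artin map/$\exp$/divisibility argument; and additivity of Sen operators under central twisting finishes the job. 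Two small points you should make explicit: the surjectivity of $X_*(\bT')\to X_*(\pi(\bT'))$ holds precisely because the kernel $\bZ$ is a \emph{torus} (so the cokernel of $X^*(\pi(\bT'))\hookrightarrow X^*(\bT')$ is free), and before forming $\delta_\tau$ one must conjugate the chosen representative of $\Theta_{0,\tau}$ so that $d\pi(\Theta_{0,\tau})$ equals, not merely is conjugate to, $d\bar\mu_\tau(1)$ --- which uses that $\bH'(\overline\Q_\ell)\to\bH(\overline\Q_\ell)$ is surjective on points.

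The genuine soft spot is the first stage. You correctly flag it as the crux, but the route you sketch --- lift over $F_\infty=F(\mu_{\ell^\infty})$ using $\mathrm{cd}_\ell\le 1$, then ``descend'' while absorbing the residual obstruction --- is not how the argument goes and is not obviously repairable: cohomological-dimension bounds apply to torsion modules, not to $\bZ(\overline\Q_\ell)=(\overline\Q_\ell^\times)^r$, and descent along $\Gal(F_\infty/F)$ reintroduces a degree-two obstruction that you do not control. The actual argument (Tate's, reproduced in \cite[$\mathsection 3.2$]{Pat19}) shows the obstruction group $H^2(\Gal_F,\overline\Q_\ell^\times)$ vanishes outright: the uniquely divisible part of $\overline\Q_\ell^\times$ contributes nothing, and $H^2(\Gal_F,\Q_\ell/\Z_\ell)=\varinjlim_n H^2(\Gal_F,\tfrac{1}{\ell^n}\Z/\Z)$ vanishes because, by local duality, the transition maps are dual to the $\ell$-power maps on the finite group $\mu_{\ell^\infty}(F)$ and are therefore eventually zero; a separate continuity argument reduces the topological cocycle to this discrete computation. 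Since you explicitly defer this step to the very reference the paper itself invokes for the whole proposition, this is a deferral rather than an error, but the particular sketch you give for stage one should not be relied upon as written.
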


\begin{prop}\label{prop_GL2r lift}
    Let $\rho_0: \mathrm{Gal}_{F} \to \GL_k(\overline \Q_\ell)$ be an $\ell$-adic Hodge-Tate representation
		such that the $\tau$-Hodge-Tate numbers are distinct for any embedding $\tau: F\to\overline\Q_\ell$.
		Suppose the following conditions hold.
		\begin{enumerate}[(a)]
		\item $\rho_0$ factors through a connected reductive subgroup $\bH\subset\GL_{k,\overline\Q_\ell}$ of semisimple rank $r_0\in\N$.
		\item $\bH$ is irreducible on the ambient space.
		\item $\bH$ contains the homothety $\mathbb{G}_m$ of $\GL_{k,\overline\Q_\ell}$.
		\item $\bH$ is of type $A_1$, i.e., $\mathrm{Lie}(\bH^{\der})$ has only $A_1$-factors.
		\end{enumerate}
Then there exist a surjective morphism $\bH':=\prod_{j=1}^{r_0}\GL_{2,\overline\Q_\ell}\to \bH$ whose kernel is a central torus
and a Hodge-Tate lifting 
    \begin{align*}
       \tilde{\rho_0}:=\bigoplus_{j=1}^{r_0} (f_j: \mathrm{Gal}_{F} \to \mathrm{GL}_2(\overline \Q_\ell))
    \end{align*}
    such that $\tau$-Hodge-Tate numbers of $f_j$ are distinct for every $1\leq j\leq r_0$.
\end{prop}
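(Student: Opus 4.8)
The plan is to reduce the statement to a group-theoretic construction and then invoke the Hodge--Tate lifting result Proposition~\ref{prop_Hodge-Tate lift}, after which the Hodge--Tate numbers of the factors $f_j$ will be forced by those of $\rho_0$.

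First I would pin down the structure of $\bH$. As $\bH$ is connected reductive, $\bH=Z(\bH)^\circ\cdot\bH^{\der}$, and since $\bH$ acts irreducibly on $\overline\Q_\ell^{k}$ (hypothesis (b)), Schur's lemma forces $Z(\bH)$ to act by scalars; together with (c) this gives $Z(\bH)^\circ=\mathbb{G}_m$ (the homothety) and shows that $\bH^{\der}$ already acts irreducibly. By (a) and (d), $\bH^{\der}$ is semisimple of type $A_1^{r_0}$ and acts faithfully, so its simply connected cover is $\SL_2^{r_0}$ and the ambient representation pulls back to an external tensor product $\mathrm{Sym}^{m_1}(\mathrm{Std})\boxtimes\cdots\boxtimes\mathrm{Sym}^{m_{r_0}}(\mathrm{Std})$ with each $m_j\ge 1$ and $k=\prod_{j=1}^{r_0}(m_j+1)$.

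Next I would build $\bH'=\prod_{j=1}^{r_0}\GL_{2,\overline\Q_\ell}$ together with the surjection $\pi$. Set $\pi(g_1,\dots,g_{r_0}):=\bigotimes_{j=1}^{r_0}\bigl(\mathrm{Sym}^{m_j}(g_j)\otimes\det(g_j)^{b_j}\bigr)\in\GL_{k,\overline\Q_\ell}$ for integers $b_j$ still to be chosen. A direct check shows $\pi(\bH')=\bH$ (the center of $\bH'$ surjects onto the homotheties, since $s\mapsto s^{m_j+2b_j}$ is surjective on $\mathbb{G}_m$, while $\SL_2^{r_0}$ surjects onto $\bH^{\der}$) and that $\ker\pi=\{(s_jI_2)_j:\prod_j s_j^{\,m_j+2b_j}=1\}$, a central subgroup of $\bH'$ isomorphic to $\mathbb{G}_m^{\,r_0-1}\times\mu_\delta$ with $\delta=\gcd_j(m_j+2b_j)$. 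Hence $\ker\pi$ is a central torus provided the $b_j$ are chosen so that $\gcd_j(m_j+2b_j)=1$; arranging this choice — i.e. guaranteeing that the kernel is genuinely connected — is the step I expect to be the main obstacle.

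Finally, applying Proposition~\ref{prop_Hodge-Tate lift} to the Hodge--Tate representation $\rho_0\colon\mathrm{Gal}_F\to\bH(\overline\Q_\ell)$ and to $\pi$ produces a Hodge--Tate lift $\tilde{\rho_0}\colon\mathrm{Gal}_F\to\bH'(\overline\Q_\ell)$; writing $f_j:=\pr_j\circ\tilde{\rho_0}$ gives $\tilde{\rho_0}=\bigoplus_{j=1}^{r_0}f_j$ with each $f_j$ Hodge--Tate. Commutativity of the lifting diagram yields $\rho_0\cong\bigotimes_{j=1}^{r_0}\bigl(\mathrm{Sym}^{m_j}(f_j)\otimes\det(f_j)^{b_j}\bigr)$, so for every embedding $\tau\colon F\hookrightarrow\overline\Q_\ell$ the multiset $\mathrm{HT}_\tau(\rho_0)$ is
$$\Bigl\{\textstyle\sum_{j=1}^{r_0}\bigl(a_j p_j^\tau+(m_j-a_j)q_j^\tau+b_j(p_j^\tau+q_j^\tau)\bigr):0\le a_j\le m_j\Bigr\},$$
where $\{p_j^\tau,q_j^\tau\}=\mathrm{HT}_\tau(f_j)$. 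Were $p_{j_0}^\tau=q_{j_0}^\tau$ for some $j_0$, the $j_0$-th summand would be independent of $a_{j_0}$, and each element of this multiset would occur with multiplicity at least $m_{j_0}+1\ge 2$, contradicting that the $\tau$-Hodge--Tate numbers of $\rho_0$ are distinct. Therefore $p_j^\tau\ne q_j^\tau$ for all $j$ and all $\tau$, i.e. each $f_j$ has distinct $\tau$-Hodge--Tate numbers, which completes the plan.
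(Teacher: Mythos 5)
Your structural analysis of $\bH$, the computation of $\ker\pi$, and the closing multiplicity argument for the $\tau$-Hodge--Tate numbers are all correct, but the step you yourself flag as ``the main obstacle'' is a genuine gap, and it cannot be closed within your framework. If some $m_j$ is odd you may take $b_j=(1-m_j)/2$ for that index and the gcd is $1$. But if \emph{all} $m_j$ are even, every $m_j+2b_j$ is even, so $\gcd_j(m_j+2b_j)\geq 2$ and the kernel is disconnected for every choice of the $b_j$. Worse, in that case no surjection $\prod_{j}\GL_{2,\overline\Q_\ell}\to\bH$ with central torus kernel exists at all: all $m_j$ even forces $\ker(\SL_2^{r_0}\to\bH^{\der})=\mu_2^{r_0}$, i.e.\ $\bH^{\der}\cong\PGL_2^{r_0}$, so the restriction of any surjection $\phi$ to $\SL_2^{r_0}=(\prod_j\GL_2)^{\der}$ must kill the full center $\mu_2^{r_0}$; hence a central torus kernel $T\subset Z(\prod_j\GL_2)=\mathbb{G}_m^{r_0}$ would have to contain $\mu_2^{r_0}$, and the only subtorus of $\mathbb{G}_m^{r_0}$ containing $\mu_2^{r_0}$ is $\mathbb{G}_m^{r_0}$ itself (a nonzero saturated sublattice of the character lattice cannot lie in $2\Z^{r_0}$). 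Then $\bH\cong\PGL_2^{r_0}$ would have trivial center, contradicting hypothesis (c). The all-even case does occur in the paper's application (e.g.\ $\bH=\mathbb{G}_m\cdot\mathrm{Sym}^2(\SL_2)\subset\GL_{3,\overline\Q_\ell}$, arising from a constituent of length $3$), so it cannot be dismissed.

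The paper gives no construction of its own here: it defers entirely to \cite[Lemma~4.9]{HL24}. The standard repair --- and all that is used downstream in Proposition~\ref{propregular} --- is to lift only up to a central twist: compose $\rho_0$ with $\bH\to\bH/Z(\bH)^{\circ}\cong\PGL_2^{r_0}$ and apply Proposition~\ref{prop_Hodge-Tate lift} to $\prod_j\GL_2\to\PGL_2^{r_0}$, whose kernel $\mathbb{G}_m^{r_0}$ genuinely is a central torus. Then $\bigotimes_j\mathrm{Sym}^{m_j}(f_j)$ agrees with $\rho_0$ only up to a Hodge--Tate character; this still feeds your multiplicity argument (a common shift of all $\tau$-Hodge--Tate numbers preserves distinctness) and suffices for the application, where only $\mathrm{ad}^0(f_j)$ is needed. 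As written, your proof establishes the statement only when some $m_j$ is odd; to cover the remaining case you must either argue as above or reformulate the conclusion, since the literal claim of a surjection $\prod_j\GL_2\twoheadrightarrow\bH$ with central torus kernel fails there.
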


\begin{proof}
    This is essentially the same as the proof of \cite[Lemma~4.9]{HL24} (using Proposition \ref{prop_Hodge-Tate lift}). 	
\end{proof}

\subsection{Proof of Theorem~\ref{thm_main2}}\label{s3.9}
\subsubsection{}
Now $K$ is totally real. By the condition ~\ref{thm_main2}(a)
 and \cite[Lemma 5.3.1]{BLGGT14}, we may assume $E$ is large enough such that
$\rho_\lambda$ is $\GL_n(E_\lambda)$-valued for all $\lambda$.
Then the algebraic monodromy group $\bG_\lambda$ is defined over $E_\lambda$ for all $\lambda$. 
To prove Theorem ~\ref{thm_main2}, it suffices (by Proposition \ref{strategy}) 
to construct some compatible system $\{\phi_\lambda\}$ (see \eqref{aux}) so that the conditions \ref{strategy}(a),(b) hold (enlarging $E$ if necessary).
This will be achieved by the potential automorphy result, Proposition \ref{prop_SO3 potential automorphy}.

Let $r\in\N$ be the rank of $\bG_\lambda^{\der}$ (independent of $\lambda$ by Theorem \ref{thm_formal bi-char indep}(iii)).
The conditions \ref{thm_main2}(c),(d) and Corollary \ref{cor_L at most 2 at most 3}(i)
imply that the semisimple Lie algebra $\mathrm{Lie}(\bG_{\lambda,\overline E_\lambda}^{\der})$ is 
of type $A_1$ for all $\lambda$. 
Since the $3r$-dimensional $\mathrm{Lie}(\bG_\lambda^{\der})$ is  
acted on by $\bG_\lambda$ (via adjoint action) and also by $\Gal_K$, we obtain a semisimple $\lambda$-adic representation
\begin{equation*}\label{adjoint}
\psi_\lambda:\Gal_K\to\GL_{3r}(E_\lambda)
\end{equation*}
such that $\psi_\lambda\otimes \overline E_\lambda$ is the direct sum of 
$r$ representations $\psi_{\lambda,i}:\Gal_K\to\GL_3(\overline E_\lambda)$ for $1\leq i\leq r$,
where each $\psi_{\lambda,i}$ corresponds to a simple ($A_1$-) factor of $\mathrm{Lie}(\bG_{\lambda,\overline E_\lambda}^{\der})$.
Since $\bG_\lambda$ is connected by the condition \ref{thm_main2}(b), the algebraic monodromy group $\bG_{\psi_{\lambda,i}}$
 (resp. $\bG_{\psi_\lambda\otimes\overline E_\lambda}$)
is isomorphic to $\SO_{3}$ (resp. $\SO_{3}^r$) for all $i$.

\begin{prop}\label{propregular}
For each $1\leq i\leq r$, the three dimensional representation $\psi_{\lambda,i}$ is regular.
\end{prop}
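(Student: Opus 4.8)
The plan is to verify the three defining properties of regularity from Definition~\ref{regdef} for $\psi_{\lambda,i}$: unramified outside a finite set, de Rham above $\ell$, and three distinct $\tau$-Hodge--Tate numbers for every $\tau\colon K\hookrightarrow\overline E_\lambda$. The first two I would dispose of formally. Indeed $\psi_\lambda$ is the representation of $\Gal_K$ on the ideal $\mathrm{Lie}(\bG_\lambda^{\der})$ inside $\End(\overline E_\lambda^n)\cong\rho_\lambda\otimes\rho_\lambda^\vee$ (the action being conjugation, i.e.\ $\Ad\circ\rho_\lambda$), and this ideal is $\bG_\lambda$-stable, hence $\Gal_K$-stable; since $\{\rho_\lambda\}$ is strictly compatible, $\rho_\lambda$ is unramified almost everywhere and de Rham at every $v\mid\ell$, and both properties descend to the subrepresentation $\psi_\lambda$ of $\rho_\lambda\otimes\rho_\lambda^\vee$ and then to each direct summand $\psi_{\lambda,i}$ of $\psi_\lambda\otimes\overline E_\lambda$. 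So all the content is in the Hodge--Tate numbers.

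To handle these, I would fix $v\mid\ell$ and an embedding $\tilde\tau\colon K_v\hookrightarrow\overline E_\lambda$ inducing $\tau$, and invoke Sen's theory: since $\rho_\lambda$ is de Rham, and since the Sen operator of an $\ell$-adic Galois representation lies in the Lie algebra of its algebraic monodromy group, there is a Hodge--Tate cocharacter $\mu=\mu_{\tilde\tau}\colon\G_m\to\bG_{\lambda,\overline E_\lambda}$, well-defined up to $\bG_\lambda(\overline E_\lambda)$-conjugacy, such that for \emph{every} algebraic representation $\bG_\lambda\to\GL_{m,\overline E_\lambda}$ the $\tilde\tau$-Hodge--Tate weights of its composite with $\rho_\lambda$ are exactly the weights of its composite with $\mu$ (see \cite{Pat19}, or the original work of Sen). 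Applied to the standard representation, regularity of $\rho_\lambda$ means that $\mu$ acts on $\overline E_\lambda^n$ with $n$ distinct weights, i.e.\ the weight spaces are $n$ distinct lines $L_1,\dots,L_n$ with $\bigoplus_t L_t=\overline E_\lambda^n$. I would then argue that $\mu$ is a \emph{regular} cocharacter: the centralizer $Z_{\bG_\lambda}(\mu)$ is connected (centralizer of a torus in a connected group) and fixes each $L_t$, hence embeds into $\prod_t\GL(L_t)\cong\G_m^n$ via $g\mapsto(g|_{L_t})_t$; being a connected subgroup of a torus it is a torus, and it contains a maximal torus $\bT\ni\operatorname{im}\mu$, so $Z_{\bG_\lambda}(\mu)=\bT$ and therefore $\langle\alpha,\mu\rangle\neq0$ for every root $\alpha$ of $(\bG_\lambda,\bT)$.

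Finally I would feed $\Ad$ on the simple ideal $\mathfrak g_i\cong\mathfrak{sl}_2$ attached to $\psi_{\lambda,i}$ into the same recipe. Here $\bG_\lambda$, being connected, acts trivially on the set of simple ideals of $\mathrm{Lie}(\bG_{\lambda,\overline E_\lambda}^{\der})$, so $\mathfrak g_i$ is $\bG_\lambda$-stable and $\psi_{\lambda,i}=\Ad_{\mathfrak g_i}\circ\rho_\lambda$ really is the pull-back along $\rho_\lambda$ of an algebraic representation of $\bG_\lambda$. Writing $\alpha_i$ for the root of $\bG_\lambda$ supported on the $i$-th factor, one has $\mathfrak g_i=(\mathfrak g_i)_{\alpha_i}\oplus(\mathfrak g_i)_{0}\oplus(\mathfrak g_i)_{-\alpha_i}$ with one-dimensional summands, whence $\mathrm{HT}_\tau(\psi_{\lambda,i})=\{\langle\alpha_i,\mu\rangle,\,0,\,-\langle\alpha_i,\mu\rangle\}$; by the previous step $\langle\alpha_i,\mu\rangle\neq0$, so these are three distinct integers. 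As $\tau$ was arbitrary, this proves the proposition.

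\textbf{Main obstacle.} Everything above is soft granted the Hodge--Tate cocharacter formalism valued in $\bG_\lambda$; the only genuinely external ingredient is the existence and functoriality of $\mu$ with values in $\bG_\lambda$ (equivalently, that the Sen operator lies in $\mathrm{Lie}(\bG_\lambda)$ and, for a de Rham representation, is semisimple with integer eigenvalues), which I would cite rather than prove. Should one prefer to bypass this, the distinctness can instead be extracted from self-duality of $\psi_{\lambda,i}$ (its monodromy is $\SO_3$, forcing $\mathrm{HT}_\tau(\psi_{\lambda,i})=\{-a,0,a\}$) combined with the remark that $a=0$ would force the Sen operator of $\rho_\lambda$ to centralize $\mathfrak g_i$, hence --- by Schur's lemma, using that the $i$-th $A_1$-factor acts nontrivially on the faithful representation $\overline E_\lambda^n$ --- to have a repeated eigenvalue, contradicting the distinctness of $\mathrm{HT}_\tau(\rho_\lambda)$.
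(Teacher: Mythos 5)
Your argument is correct, but it takes a genuinely different route from the paper's. The paper restricts $\rho_\lambda$ to $\Gal_{K_v}$ and invokes the Hodge--Tate lifting machinery (Propositions \ref{prop_Hodge-Tate lift} and \ref{prop_GL2r lift}, following \cite{HL24}): it lifts $\rho_0$ through $\prod_j\GL_2\twoheadrightarrow\bH$ to obtain two-dimensional Hodge--Tate representations $f_j$ with distinct $\tau$-Hodge--Tate numbers, identifies $\psi_{\lambda,i}|_{\Gal_{K_v}}\simeq\mathrm{ad}^0(f_j)$, and reads off the weights $\{a-b,0,b-a\}$. You instead work directly with the Sen operator $\Theta$ of $\rho_\lambda|_{\Gal_{K_v}}$: since it lies in $\mathrm{Lie}(\bG_\lambda)$ and is regular semisimple there (its centralizer preserves the $n$ distinct eigenlines of the standard representation, hence is diagonalizable, hence is a maximal torus --- note the centralizer could a priori contain root groups $U_\alpha$ with $\alpha(\Theta)=0$, but these are unipotent, so diagonalizability already rules them out without needing connectedness of the centralizer), every root pairs nontrivially with it, and the weights of $\Ad$ on the $i$-th $A_1$-ideal are $\{\alpha_i(\Theta),0,-\alpha_i(\Theta)\}$. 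This is cleaner and bypasses Propositions \ref{prop_Hodge-Tate lift}--\ref{prop_GL2r lift} entirely, at the cost of importing as a black box the statement that the Sen operator lies in the Lie algebra of the algebraic monodromy group and computes $\tau$-Hodge--Tate weights functorially in representations of $\bG_\lambda$ (Sen's theorem plus standard Hodge--Tate cocharacter formalism, available in \cite{Pat19}); minor points such as the Sen operator living over $\C_\ell$ rather than $\overline E_\lambda$, or $\Theta$ integrating to an actual cocharacter, are harmless since only the eigenvalue combinatorics are used. Your closing self-duality remark is also a valid shortcut. Either proof is acceptable; the paper's choice reflects its reliance on the $\GL_2$-lifting technique imported from \cite{HL24}.
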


\begin{proof}
Since $\{\rho_\lambda\}$
is strictly compatible by the condition \ref{thm_main2}(a), the system $\{\rho_\lambda\otimes\rho_\lambda^\vee\}$
is also strictly compatible. 
As $\psi_\lambda$ is a subrepresentation of $\rho_\lambda\otimes\rho_\lambda^\vee$,
it is de Rham at places $v$ above $\ell$.
It remains to show that $\psi_{\lambda,i}|_{\Gal_{K_v}}$ has distinct
$\tau$-Hodge-Tate numbers for any embedding $\tau: K_v\to \overline E_\lambda$ (see Definition \ref{regdef}).

Take $F:=K_v$ and $\overline E_\lambda\simeq \overline \Q_\ell$ a field isomorphism.
For each non-abelian irreducible subrepresentation $\rho_0$ of $\rho_\lambda\otimes\overline E_\lambda$,
the restriction 
\begin{equation}\label{restrictF}
\rho_0:\Gal_F\to\GL_k(\overline \Q_\ell)
\end{equation}
has distinct $\tau$-Hodge-Tate numbers 
by the condition \ref{thm_main2}(a). Moreover, \eqref{restrictF} 
satisfies the conditions \ref{prop_GL2r lift}(a)--(d) with $\bH:=\bG_{\rho_0}\mathbb{G}_m$, where $\mathbb{G}_m$
denotes the homothety of $\GL_{k,\overline\Q_\ell}$. Hence, we obtain $f_1,...,f_{r_0}$ by Proposition \ref{prop_GL2r lift},
where $r_0\in\N$ is the semisimple rank of $\bG_{\rho_0}$.

Since $\psi_{\lambda,i}$ is a simple $A_1$-factor of $\psi_\lambda\otimes\overline\Q_\ell=\mathrm{Lie}(\bG_{\lambda,\overline\Q_\ell}^{\der})$, 
there exists an irreducible $\rho_0$ (as above) such that $\psi_{\lambda,i}$ is injective via the natural $\Gal_F$-equivariant morphism
$$\mathrm{Lie}(\bG_{\lambda,\overline\Q_\ell}^{\der})\to\mathrm{Lie}(\bG_{\rho_0}^{\der})=\mathrm{Lie}(\bH^{\der})\simeq \mathrm{Lie}(\bH'^{\der}),$$
where the first arrow comes from $\rho_0$-projection and the last isomorphism holds because the kernel of $\bH'\twoheadrightarrow\bH$ 
is a central torus by Proposition \ref{prop_GL2r lift}. Hence, we find some $f_j:\Gal_F\to\GL_2(\overline\Q_\ell)$ 
such that the restriction
\begin{equation}\label{ad0}
\psi_{\lambda,i}|_{\Gal_{F}}\simeq \mathrm{ad}^0(f_j)
\end{equation}
as $\Gal_F$-representations, where $\mathrm{ad}^0(f_j)$ denotes
the trace zero part of $f_j\otimes f_j^\vee=\End(f_j)$.
Since the $\tau$-Hodge-Tate numbers of $f_j$ are distinct by Proposition \ref{prop_GL2r lift},
the same holds for $\psi_{\lambda,i}|_{\Gal_{F}}$ by \eqref{ad0}.
\end{proof}

\subsubsection{}
Now $\psi_{\lambda,i}$ is regular and $\bG_{\psi_{\lambda,i}}=\SO_3\subset\GL_3$ for all $\lambda\in\Sigma_E$ and $1\leq i\leq r$.
Then Proposition \ref{prop_SO3 potential automorphy} 
(on the $E$-rational strictly compatible system $\{\rho_\lambda\otimes\rho_\lambda^\vee\}$) 
implies the existence of a prime $\lambda_0\in\Sigma_E$ such that $\psi_{\lambda_0,i}$ 
is potentially automorphic and is part of a semisimple $E$-rational strictly compatible system 
$\{\phi_{i,\lambda}\}$ for all $1\leq i\leq r$ (by enlarging $E$ if necessary).
Therefore,  we obtain a $3r$-dimensional semisimple $E$-rational compatible system
\begin{equation}\label{aux}
\{\phi_\lambda:=\bigoplus_{i=1}^r \phi_{i,\lambda}:\Gal_K\to \prod_{i=1}^r \GL_3(\overline E_\lambda)\}_{\lambda\in\Sigma_E}
\end{equation}
of the totally real $K$ such that $\phi_{\lambda_0}=\psi_{\lambda_0}$. 

\begin{prop}\label{auxgp}
For all $\lambda$, the algebraic monodromy group $\bG_{\phi_{\lambda}}$ of $\phi_\lambda$ is $\SO_3^r\subset \GL_3^r$.
\end{prop}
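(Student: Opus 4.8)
The plan is to combine the $\lambda$-independence results of $\mathsection\ref{s3.5}$ with the irreducibility of the three-dimensional summands $\phi_{i,\lambda}$, the latter being forced by the automorphy built into the construction. Write $G:=\bG_{\phi_\lambda}$ and let $G_i:=\bG_{\phi_{i,\lambda}}\subseteq\GL_{3,\overline E_\lambda}$ denote the algebraic monodromy group of $\phi_{i,\lambda}$. Since $\phi_\lambda=\bigoplus_{i=1}^r\phi_{i,\lambda}$ is semisimple, so is each $\phi_{i,\lambda}$; hence $G$ and all $G_i$ are reductive, and $G$ embeds into $\prod_{i=1}^r G_i$ with each projection $G\to G_i$ surjective. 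Recall $\phi_{\lambda_0}=\psi_{\lambda_0}$, so $\bG_{\phi_{\lambda_0}}=\SO_3^r$ (block-diagonally inside $\prod_{i=1}^r\GL_{3}$) and $\bG_{\phi_{i,\lambda_0}}=\SO_3$. By Theorem~\ref{thm_formal bi-char indep}(i), $G$ and every $G_i$ are connected for all $\lambda$. By Theorem~\ref{thm_formal bi-char indep}(ii),(iii), $G$ has rank $r$ and semisimple rank $r$ (the corresponding invariants of $\SO_3^r$), so $G$ is semisimple of rank $r$, and each $G_i$ has rank $1$ and semisimple rank $1$, hence is semisimple of type $A_1$, i.e.\ $G_i\cong\SL_2$ or $\PGL_2$.

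The main step is to upgrade this to $G_i=\SO_3\subseteq\GL_{3,\overline E_\lambda}$ for every $\lambda$. A connected semisimple subgroup of $\GL_3$ of type $A_1$ acting irreducibly on $\overline E_\lambda^3$ must be the image of $\PGL_2$ under its adjoint (equivalently $\mathrm{Sym}^2(\mathrm{Std})$) representation, which is exactly the orthogonal group of the invariant nondegenerate symmetric form; the only alternative, $G_i\cong\SL_2$ embedded via $\mathrm{Std}\oplus\mathbb 1$, makes $\phi_{i,\lambda}$ reducible. So it suffices to show $\phi_{i,\lambda}$ is irreducible for every $\lambda$. By construction (via Proposition~\ref{prop_SO3 potential automorphy}), $\phi_{i,\lambda_0}=\psi_{\lambda_0,i}$ is potentially automorphic: there is a finite totally real extension $K'/K$ and a regular algebraic polarized cuspidal automorphic representation $\pi_i$ of $\GL_3(\A_{K'})$ with $\phi_{i,\lambda_0}|_{\Gal_{K'}}\cong\rho_{\pi_i,\lambda_0}$, and $\phi_{i,\lambda_0}$ is part of the semisimple $E$-rational strictly compatible system $\{\phi_{i,\lambda}\}$. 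The strictly compatible systems $\{\phi_{i,\lambda}|_{\Gal_{K'}}\}$ and $\{\rho_{\pi_i,\lambda}\}$ agree at $\lambda_0$, hence have the same Frobenius characteristic polynomials at all unramified places; by Chebotarev's density theorem and semisimplicity (the restriction of a semisimple representation to a finite-index subgroup is semisimple), $\phi_{i,\lambda}|_{\Gal_{K'}}\cong\rho_{\pi_i,\lambda}$ for all $\lambda$. Since the Galois representations attached to cuspidal automorphic representations of $\GL_3$ are irreducible (see, e.g., \cite{PT15}), each $\phi_{i,\lambda}|_{\Gal_{K'}}$, and hence each $\phi_{i,\lambda}$, is irreducible, so $G_i=\SO_3$.

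It remains to promote the inclusion $G\hookrightarrow\prod_{i=1}^rG_i=\SO_3^r$ to an equality. As $G$ is semisimple of rank $r=\rank(\SO_3^r)$, a maximal torus $\bT$ of $G$ is also a maximal torus of $\SO_3^r$, and the root system of $G$ relative to $\bT$ is a subsystem of $\Phi_{\SO_3^r}$ (an $A_1^r$-system) that spans $\X(\bT)\otimes\R$; such a subsystem must be all of $\Phi_{\SO_3^r}$. Since each root space of $\mathrm{Lie}(\SO_3^r)$ is one-dimensional, this forces $\mathrm{Lie}(G)=\mathrm{Lie}(\SO_3^r)$, whence $G=\SO_3^r$ by connectedness, which is the assertion.

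I expect the irreducibility step to be the crux. The possibility $G_i\cong\SL_2$ embedded via $\mathrm{Std}\oplus\mathbb 1$ cannot be excluded by $\lambda$-independence of the formal bi-character, the Hodge--Tate weights, or the Frobenius characteristic polynomials alone, since that embedding shares all of these data with $\SO_3$ via the adjoint representation; ruling it out genuinely requires transporting automorphy of $\phi_{i,\lambda_0}$ along the strictly compatible system and invoking irreducibility of three-dimensional automorphic Galois representations.
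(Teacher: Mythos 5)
Your argument is correct in outline and reaches the conclusion by the same overall strategy as the paper: establish $\bG_{\phi_{i,\lambda_0}}=\SO_3$, transport the (potential) automorphy of $\phi_{i,\lambda_0}$ along the strictly compatible system $\{\phi_{i,\lambda}\}$ to control $\bG_{\phi_{i,\lambda}}$ for every $\lambda$, and then pass from $\bG_{\phi_\lambda}\subseteq\SO_3^r$ to equality by a rank/root-system count. The difference lies in the middle step. The paper simply invokes the $\lambda$-independence of the algebraic monodromy group of an automorphic compatible system (\cite[Proposition~4.16(iii)]{Hui23a}) applied to $\{\phi_{i,\lambda}|_{\Gal_{K_i}}\}$, whereas you re-derive the needed special case by hand: you correctly observe that the formal bi-character cannot distinguish $\SO_3\subset\GL_3$ from $\SL_2$ embedded via $\mathrm{Std}\oplus\UOne$ (their maximal tori are $\GL_3$-conjugate), so some genuinely new input is required, and you supply it via irreducibility of the three-dimensional automorphic Galois representations $\rho_{\pi_i,\lambda}$ together with the classification of connected rank-one semisimple subgroups of $\GL_3$. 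This makes the mechanism more transparent than the paper's black-box citation.

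The one point to repair is the reference for that irreducibility. \cite{PT15} proves irreducibility only for a Dirichlet density one set of $\lambda$, which is not sufficient here: you need $\bG_{\phi_{i,\lambda}}=\SO_3$ for \emph{all} $\lambda$, since the conclusion feeds into condition (b) of Proposition~\ref{strategy} at every place. Irreducibility for all $\lambda$ of regular algebraic polarized cuspidal automorphic Galois representations in dimension $3$ over a totally real field is indeed a theorem (it is contained in \cite{Hui23a}, and can also be extracted from the symmetric-square description of self-dual cusp forms on $\GL_3$ together with irreducibility for Hilbert modular forms), so the gap is one of citation rather than substance; but as written the quoted source does not deliver the statement you use. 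With that reference corrected, the proof is complete.
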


\begin{proof}
Note that the assertion holds at $\lambda_0$. In particular,
the algebraic monodromy group of $\phi_{i,\lambda_0}$ satisfies
\begin{equation}\label{so3}
\bG_{\phi_{i,\lambda_0}}=\SO_3\subset\GL_3
\end{equation} 
for every $i$.
As Proposition \ref{prop_SO3 potential automorphy} asserts that 
the restriction 
$\{\phi_{i,\lambda}|_{\Gal_{K_i}}\}$ is automorphic for 
some totally real extension $K_i/K$, 
the algebraic monodromy group of $\phi_{i,\lambda}|_{\Gal_{K_i}}$
is independent of $\lambda$ (see \cite[Proposition 4.16(iii)]{Hui23a}),
which must be $\SO_3\subset\GL_3$ by \eqref{so3}. By \eqref{so3} again and Theorem \ref{thm_formal bi-char indep},
we obtain
\begin{equation*}\label{so3'}
\bG_{\phi_{i,\lambda}}=\SO_3\subset\GL_3
\end{equation*} 
for all $\lambda$ and $i$, which implies that 
\begin{equation}\label{so3''}
\bG_{\phi_{\lambda}}\subset \SO_3^r
\end{equation} 
in $\GL_3^r$ for all $\lambda$. Since \eqref{so3''} is an equality at $\lambda_0$,
it follows from Theorem \ref{thm_formal bi-char indep} that \eqref{so3''} is an equality for all $\lambda$.
We are done.
\end{proof}

\subsubsection{}
With $\{\phi_\lambda\}$ at hand, we are in the setting of Proposition \ref{strategy}.
To complete the proof of Theorem \ref{thm_main2}, 
it remains to verify the conditions \ref{strategy}(a),(b).
The algebraic monodromy group $\hat\bG_\lambda$ of $\rho_\lambda\oplus\phi_\lambda$ 
is contained in the product $\bG_\lambda\times\bG_{\phi_\lambda}\subset\GL_n\times \GL_{3r}$. 
Let $\pi_1$ and $\pi_2$ be respectively, the projection to the first and second factor.
Since $\phi_{\lambda_0}$ is given by the adjoint action on $\mathrm{Lie}(\bG_{\lambda_0}^{\der})$, the
condition \ref{strategy}(a) holds (at $\lambda=\lambda_0$). 
Since the image of 
\begin{equation}\label{compo}
\bG_\lambda\stackrel{\pi_1^{-1}}{\longrightarrow}\hat\bG_\lambda\stackrel{\pi_2}{\rightarrow}\GL_{3r,\overline E_\lambda}
\end{equation}
is $\SO_3^r\subset\GL_3^r$ (Proposition \ref{auxgp}) and $r$ is the semisimple rank of $\bG_\lambda$, \eqref{compo} can only be 
the adjoint representation on the semisimple part $\mathrm{Lie}(\bG_{\lambda}^{\der})$, i.e., 
the condition \ref{strategy}(b) holds for all $\lambda$.

\subsection{Proof of Corollary \ref{cor1.8}}\label{s3.10}
The compatible system $\{\rho_\lambda\}$ is rectangular by the condition \ref{cor1.8}(c).
Hence, Corollary \ref{cor1.8}(i) follows immediately from Theorem \ref{thm_main2}.
Since $\rho_{\lambda_0}$ is absolutely irreducible (\ref{cor1.8}(c)),
Corollary \ref{cor1.8}(ii) follows directly from Corollary \ref{cor1.8}(i).
Since $\bG_{\lambda,\C}$ is of type $A_1$ for all $\lambda$
(by Corollary \ref{cor1.8}(i) and \ref{cor1.8}(c)), Corollary \ref{cor1.8}(iii) follows directly from Corollary \ref{cor1.8}(ii), 
Proposition \ref{cond}, and Theorem \ref{big}(i).

\section*{Acknowledgments}
We are grateful to Brian Conrad for pointing out two missing rectangular representations of $D_4$, and his comments and suggestions.
C.-Y. Hui would like to thank Kei Yuen Chan, Zachary Feng, Dmitri Whitmore, Kayue Daniel Wong, and Jun Yu for their interests in the article. C.-Y. Hui was partially supported by Hong Kong RGC (no. 17314522), NSFC (no. 12222120), and a Humboldt Research Fellowship. Wonwoong Lee has been supported by the National
Research Foundation of Korea (NRF) grant funded by the Korea government
(MSIP) (No. RS-2024-00341327 and No. RS-2024-00415601 (G-BRL)).

\end{document}